      \theoremstyle{plain}
      \newtheorem{theorem}{Theorem}[section]
      \newtheorem{lemma}[theorem]{Lemma}
      \newtheorem{corollary}[theorem]{Corollary}
      \newtheorem{proposition}[theorem]{Proposition}
      \newtheorem{remark}[theorem]{Remark}
      \newtheorem{definition}[theorem]{Definition}
\numberwithin{equation}{section}
      \def\@setcopyright{}
      \def\serieslogo@{}
\def\M{\mathcal M}
\def\t{\mathcal T}
\def\R{\mathbb R}
\def\Q{\mathbb Q}
\def\Z{\mathbb Z}
\def\N{\mathbb N}
\def\T{\mathbb T}
\def\Td{\mathbb T^d}
\def\a{\alpha}
\def\d{\Delta}
\def\dist{\text{dist}}
\def\Id{\text{Id}}
\def\e{\varepsilon}
\def\Ci{C^\infty}
\def\Cr{C^{N+\alpha}}
\def\xf{{\chi}}
\def\xs{{\chi}}
\def\f{\bar f}
\def\ti{\tilde}
\def\s{{\mathcal{S}}}
\def\E{{\mathcal{E}}}
\def\V{{\mathcal{N}}}
\def\w{{\mathcal{W}}}
\def\v{{\mathcal{V}}}
\def\Wf{\mathcal U}
\def\WL{U}
\def\VL{W}
\def\Vf{\mathcal W}
\def\Uf{\mathcal V}
\def\UL{V}
\def\F{\tilde V}
\def\f{{\mathcal {F}}}
\def\fd{{F}}
\def\p{{\mathcal {P}}}
\def\pd{{P}}
\def\h{{\varphi}}
\def\hd{H}
\def\H{{\mathcal {H}}}
\newcommand{\la}{\lambda}
\def\QED{\hfill\hfill{\square}}
\begin{document}

\date{\today}
\author{Boris Kalinin$^1$ and Victoria Sadovskaya$^2$}

\address{Department of Mathematics, The Pennsylvania State University, University Park, PA 16802, USA.}
\email{kalinin@psu.edu, sadovskaya@psu.edu}

\title [Rigidity of strong and  weak foliations]
{Rigidity of strong and  weak foliations}

\thanks{{\em Key words:} Anosov diffeomorphism, conjugacy,  strong and weak foliations, joint integrability, normal forms}

\thanks{$^1$  Supported in part by Simons Foundation grant 855238}
\thanks{$^2$ Supported in part by  Simons Foundation grant MP-TSM-00002874}


\begin{abstract}

We consider a perturbation $f$ of a hyperbolic toral automorphism $L$.  We study rigidity related to exceptional properties of the strong and weak stable foliations for $f$. If the strong foliation  is mapped to the linear one by the conjugacy $h$ between $f$ and $L$, we  obtain smoothness of $h$ along the weak foliation and regularity of the joint foliation of the strong and unstable foliations. We also establish a similar global result. If the weak foliation is sufficiently regular, we  obtain smoothness of the conjugacy along the strong foliation and regularity of the joint foliation of the weak and unstable foliations. If both conditions hold, we get smoothness of $h$ along the stable foliation. We also deduce a rigidity result for the symplectic case. The main theorems are obtained in a unified way using our new result on relation between holonomies and normal forms.

\end{abstract}

\maketitle 


\section{Introduction}


In this paper we consider a perturbation $f$ of a hyperbolic toral automorphism $L$.  We study rigidity related to exceptional properties of the strong and weak stable foliations for $f$.
We recall that an automorphism $L$ of $\T^d$ is hyperbolic if the matrix  has no eigenvalues of modulus 1. We denote its stable and unstable subspaces by $E^s$ and $E^u$.

Let $f$ be a $C^\infty$ diffeomorphism 
 of $\T^d$ which is $C^1$ close to $L$. Then $f$ is an Anosov diffeomorphism, i.e., the tangent bundle of $\T^d$ splits into a $Df$-invariant direct sum of the stable and unstable subbundles $\E^s$ and $\E^u$, where 
 $$
 \|Df|_{\E^s}\|<1\;\text{  and  }\;\|Df^{-1}|_{\E^u}\|<1
 $$ 
 for some Riemannian metric.
Also, $f$  is conjugate to $L$ by a homeomorphism $h$, i.e., 
\begin{equation}\label{h}
h\circ f=L\circ h.
\end{equation}
The conjugacy close to the identity  is unique, 
and it is bi-H\"older but usually not $C^1$. Any two conjugacies differ by an affine automorphism of $\T^d$ commuting with $L$, and hence they have the same regularity.

We denote the stable and unstable  foliations for $L$ by $W^{s}$ and $W^{u}$, and for $f$ by $\w^{s}$ and $\w^{u}$. The foliations  $\w^s$ and $\w^u$ have uniformly $C^\infty$ leaves, but they are not even $C^1$ foliations  in general.
A foliation is  $C^r$ if it has $C^r$ local foliation charts. We say that the leaves are uniformly 
$C^r$ if locally they can be $C^r$ embedded with the embeddings varying continuously in the $C^r$ topology. 
For a foliation $\w$ with uniformly $C^r$ leaves, we say that a function on $\T^d$ is uniformly $C^r$
along  $\w$ if locally its restrictions to the leaves are $C^r$ and vary continuously in the $C^r$ topology,
see Section \ref{sec NF fol} for more details and noninteger $r$.

We recall that the Lyapunov space for an exponent $\chi$ of $L$  is the sum of generalized 
eigenspaces of all eigenvalues of $L$ with modulus $e^\chi$.
If $L$ has more than one stable Lyapunov exponent, one can take a dominated splitting
$E^s= E^{ss}\oplus E^{ws}$ into strong and weak parts by combining
one or more Lyapunov spaces into each part.
Since $f$ is $C^1$ close to $L$, its stable subbundle   has the corresponding dominated splitting
$C^0$-close to that of $L$:
 \begin{equation}\label{ss ws for f}
\E^s= \E^{ss}\oplus \E^{ws}. 
\end{equation}
The bundle $\E^{ss}$ is integrable to the strong foliation $\w^{ss}$, which has uniformly $C^\infty$ leaves
and is a $C^\infty$ subfoliation of the leaves of $\w^s$.
The bundle $\E^{ws}$ is also integrable to the weak foliation  $\w^{ws}$,  but  in general even the leaves of $\w^{ws}$ are only uniformly $C^{1+\text{H\"older}}$. We always have $h(\w^{s})=W^{s}$  and $h(\w^{ws})=W^{ws}$ by \cite{G08}, but usually not $h(\w^{ss})=W^{ss}$.  Our main goal is to explore what rigidity properties follow if we assume that $h(\w^{ss})=W^{ss}$  or that  $\w^{ws}$ is sufficiently regular.

\vskip.1cm 

Let $h$ be the conjugacy \eqref{h} between $f$ and $L$ close to the identity. Then it can be written as $h(x)=x+\d (x)$, where $\d: \T^d \to \R^d$.
We will consider the components $\d^*$ of $\d$, where $*=u, s,ss,ws$, with respect to the splittings
$$
\R^d=E^u\oplus E^s= E^u\oplus E^{ss}\oplus E^{ws}.
$$
We will also write $h^*(x)=x^*+\d ^*(x)$, which can be defined  globally for the lift of $h$ to the universal cover $\R^d$, or locally on $\T^d$. Thus the regularity of $h^*$ is well-defined.
\vskip.1cm

First we consider the rigidity of a strong foliation. 

\begin{theorem} [Rigidity of a strong foliation] \label{th strong} 
Let $L$ be a hyperbolic automorphism of $\,\T^d$ with dense leaves of $W^{ss}$. 
Let $f$ be a $C^\infty$ diffeomorphism sufficiently $C^1$ close to $L$, and let $h$ be a topological conjugacy between $f$ and $L$. Then for the statements below we have 
$$
  (1)\iff (2)\iff (3) \implies (5) \implies (4).
$$
Moreover, if the leaves of each of the Lyapunov subfoliations  of $W^{ws}$ are dense in $\T^d$, then the five statements are equivalent.

 \vskip.1cm
 \begin{itemize} 
 \item[(1)]\, $h(\w^{ss})=W^{ss}$,
 \vskip.1cm
 \item[(2)]\, $\w^{u}$ and $\w^{ss}$ are jointly integrable to a  foliation $\w^{u+ss}$, 
 \vskip.1cm
 \item[(3)]\, (2) holds and the foliation  $\w^{u+ss}$   is conjugate to the linear foliation $W^u\oplus W^{ss}$  by a 
 \,$C^{\infty}$ diffeomorphism of $\T^d$,
 \vskip.1cm
 \item[(4)]\, $h$ is a $C^{1}$ diffeomorphism along the leaves of $\w^{ws}$,  and \\
\,the derivative 
$D(h|_{\w^{ws}(x)}):\,\E^{ws}_x \to \R^d $ is H\"older continuous on $\T^d$,
 \vskip.1cm
 \item[(5)]\,  $h^{ws}$ is $C^{\infty}$ on $\T^d$,\,   and   if the leaves of $\w^{ws}$ are uniformly $C^q$,\\
  then $h$ is a uniformly $C^{q}$ diffeomorphism along $\w^{ws}$.
\end{itemize}
\end{theorem}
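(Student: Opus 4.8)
\emph{Overall plan.} I would establish $(1)\Rightarrow(2)$ and $(3)\Rightarrow(2)$ directly, the harder converses $(2)\Rightarrow(1)$ and $(2)\Rightarrow(3)$ through the relation between holonomies and normal forms (the paper's main technical input), then $(1)\Rightarrow(5)$ (again via that relation) and $(5)\Rightarrow(4)$ directly, and finally, under the density hypothesis on the Lyapunov subfoliations of $W^{ws}$, the implication $(4)\Rightarrow(2)$, which makes all five statements equivalent. A useful preliminary observation is that, since $L$ is block-diagonal with respect to $\R^d=E^u\oplus E^{ss}\oplus E^{ws}$, the conjugacy equation splits componentwise into $h^{u}\circ f=L|_{E^{u}}\circ h^{u}$, $h^{ss}\circ f=L|_{E^{ss}}\circ h^{ss}$, $h^{ws}\circ f=L|_{E^{ws}}\circ h^{ws}$; moreover $h(\w^u)=W^u$ forces $h^{ss}$ and $h^{ws}$ to be locally constant along $\w^u$-leaves, and $h(\w^{ws})=W^{ws}$ forces $h^{u}$ and $h^{ss}$ to be locally constant along $\w^{ws}$-leaves. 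Consequently $(1)$, i.e. $h(\w^{ss})=W^{ss}$, is equivalent to the single statement that $h^{ws}$ is locally constant along $\w^{ss}$-leaves.

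\emph{Soft links.} $(3)\Rightarrow(2)$ is immediate. For $(1)\Rightarrow(2)$: the linear foliations $W^u,W^{ss}$ are jointly integrable to $W^u\oplus W^{ss}$, so $\w^{u+ss}:=h^{-1}(W^u\oplus W^{ss})$ is a topological foliation tangent to the $Df$-invariant bundle $\E^u\oplus\E^{ss}$, and each of its leaves is a saturation $\bigcup_{z\in\w^{ss}(x)}\w^u(z)=\bigcup_{y\in\w^u(x)}\w^{ss}(y)$, hence an injectively immersed uniformly $C^\infty$ submanifold; this is the required $\w^{u+ss}$. For $(5)\Rightarrow(4)$: the leaves of $\w^{ws}$ are always uniformly $C^{1+\beta}$ for some $\beta>0$, so $(5)$ applied with $q=1+\beta$ gives that $h$ is a uniformly $C^{1+\beta}$ diffeomorphism along $\w^{ws}$, in particular one with H\"older leafwise derivative.

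\emph{The hard links.} Assume $(2)$. Then $\w^{u+ss}$ is $f$-invariant with uniformly $C^\infty$ leaves, carrying the contracting subfoliation $\w^{ss}$ and the expanding subfoliation $\w^u$ and transverse to $\w^{ws}$. Proving $(3)$ amounts to upgrading the canonical leafwise linearization—obtained inside each leaf by composing the polynomial normal-form coordinates along $\w^{ss}$ with the $\w^u$-holonomy—to a globally $C^\infty$ diffeomorphism $g$ of $\T^d$ with $g(\w^{u+ss})=W^u\oplus W^{ss}$; and proving $(2)\Rightarrow(1)$ amounts to showing that $h^{ws}$ is locally constant along $\w^{ss}$, equivalently that the map $\pi^{ss}\circ h$ restricted to a $\w^{ss}$-leaf—which, by the componentwise conjugacy equation, conjugates the contraction $f|_{\w^{ss}}$ to the linear $L|_{E^{ss}}$—is a \emph{full} homeomorphism onto the $W^{ss}$-leaf. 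Both are instances of one mechanism: the relation between holonomies and normal forms, which I would use to identify the relevant transverse (resp. auxiliary) conjugacy with the holonomy of a smooth linear cocycle along $\w^{u+ss}$ (resp. along $\w^{ss}$), thereby forcing it to be as regular as the leaves; a Journ\'e-type patching lemma then supplies the global $C^\infty$ map $g$ in $(3)$ and, in $(1)$, the smoothness of the $\w^{ss}$-leaf conjugacy together with the surjectivity onto $W^{ss}$. I expect the passage from the a priori transverse regularity—only $C^{1+\beta}$, since the natural transversal $\w^{ws}$ is itself only $C^{1+\beta}$—to $C^\infty$ to be the principal obstacle.

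\emph{From $(1)$ to $(5)$, and $(4)\Rightarrow(2)$ under density.} Granting $(1)$: by the preliminary observation $h^{ws}$ is locally constant, hence uniformly $C^\infty$, along $\w^u$ and along $\w^{ss}$, so along $\w^{u+ss}$; along $\w^{ws}$, the holonomy–normal-form relation identifies $h$ restricted to a $\w^{ws}$-leaf with the canonical leafwise conjugacy between the sub-resonance normal forms of $f|_{\w^{ws}}$ and $L|_{W^{ws}}$, hence it is as smooth as the leaves, and $h^{ws}$ is a uniformly $C^q$ diffeomorphism along $\w^{ws}$ whenever those leaves are uniformly $C^q$. In particular $h^{ws}$ is uniformly $C^{1+\beta}$ along $\w^{ws}$ and uniformly $C^\infty$ along the transverse $\w^{u+ss}$, so a Journ\'e-type lemma gives $h^{ws}\in C^\infty(\T^d)$, which is $(5)$; then $(5)\Rightarrow(4)$ as above. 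Finally, assuming $(4)$, differentiating $h\circ f=L\circ h$ along $\w^{ws}$ yields a H\"older cocycle identity for $x\mapsto D(h|_{\w^{ws}(x)})_x$ which at a periodic point $p$ of period $n$ conjugates $Df^n|_{\E^{ws}_p}$ to $L^n|_{E^{ws}}$; the density of each Lyapunov subfoliation of $W^{ws}$ is exactly what propagates this matching off the periodic orbits, over all of $\T^d$ and across the weak-stable Lyapunov blocks, until the $C^\infty$ strong-stable holonomies inside the leaves of $\w^s$ intertwine with the linear ones—which is equivalent to joint integrability of $\w^u$ and $\w^{ss}$, i.e. $(2)$—whence all five statements are equivalent. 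Without density this last implication is expected to fail.
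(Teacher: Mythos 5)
Your skeleton overlaps with the paper's, but the two places where you defer to ``the relation between holonomies and normal forms'' are precisely where your argument does not go through. The most serious gap is $(2)\Rightarrow(1)$, which you need for the unconditional equivalence of (1), (2), (3). The holonomy/normal-form mechanism cannot be run under (2) alone: to make holonomies interact with normal forms one needs (i) normal forms on $\w^{ws}$, which do not exist a priori because its leaves are only $C^{1+\text{H\"older}}$ --- the paper builds them (Theorem~\ref{NF weak}) out of the $C^\infty$ holonomies of $\w^{ss}$ inside $\w^s$ --- and (ii) globally defined $\w^{ss}$-holonomies between $\w^{ws}$-leaves that are conjugated by $h$ to translations along dense $W^{ss}$-leaves; this uses $h(\w^{ss})=W^{ss}$, i.e.\ statement (1) itself. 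Under (2) you have neither, and your premise that the leaves of $\w^{u+ss}$ are automatically uniformly $C^\infty$ (``a saturation of $C^\infty$ leaves'') is unjustified --- such saturations are in general only H\"older transversally, as the weak-stable leaves already show. The paper does not prove $(2)\Rightarrow(1)$ with its techniques at all; it imports it from \cite{GSh}. As written, your proposal leaves this implication unproved, and likewise $(2)\Rightarrow(3)$, which the paper only obtains after (1) and (5) are in hand.

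Your route to (5) also contains an error: deducing $h^{ws}\in C^\infty(\T^d)$ by Journ\'e from ``uniformly $C^{1+\beta}$ along $\w^{ws}$ and $C^\infty$ along $\w^{u+ss}$'' is invalid, since Journ\'e-type lemmas give only the minimum of the two leafwise regularities, hence $C^{1+\beta}$, not $C^\infty$. In the paper $h$ restricted to a $\w^{ws}$-leaf is only as smooth as the leaf; the $C^\infty$ object is $\bar h_x=h\circ \h_x^{-1}$ on $\E^{ws}_x$, obtained by a step you omit entirely: limits of lifted $\w^{ss}$-holonomies along a dense $W^{ss}$-leaf are sub-resonance (Theorem~\ref{Main Hol}), so $\bar h_x$ conjugates the translation action of $E^{ws}$ into the finite-dimensional group $\bar\s_x$, and a continuous homomorphism of Lie groups is automatically smooth. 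Smoothness of $h^{ws}$ on $\T^d$ then comes from Journ\'e applied to $\w^{u+ss}$ together with smooth transversals $\t_x$, along which $h^{ws}$ coincides with the $ws$-component of $h\circ\H_x=\bar h_x\circ\hat\h_x\circ i_x^{-1}$, a composition of $C^\infty$ maps (again using $h(\w^{ss})=W^{ss}$). Finally, your $(4)\Rightarrow(2)$ via periodic data and ``propagation'' is only a heuristic with no mechanism; the paper instead proves $(4)\Rightarrow(1)$ by induction on the flag $\E^{i,ss}$ (Proposition~\ref{prop_for_ind}), showing that the holonomies of $h(\w^{i-1,ss})$ between $W^{i}$-leaves are parallel translations --- a contraction argument using the H\"older leafwise derivative of $h$ and the at most polynomial quasiconformal distortion of $L^n$ on a Lyapunov space --- and then a density argument for the relevant Lyapunov leaves. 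You would need to supply arguments of this kind (or cite \cite{GSh} for $(2)\Rightarrow(1)$) for the proposal to be complete.
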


In Section \ref{examples} we give basic examples of perturbations $f$ satisfying the assumptions
of Theorem \ref{th strong}, illustrating the regularity we obtain and necessity of the extra assumption
for the implications (4)$\implies$(5) and (4)$\implies$(1). Similar examples for subsequent theorems are also included.
Our results essentially recover the main features of the basic models.

\begin{remark} [Irreducibility conditions]
A matrix $L\in SL(d,\Z)$ is called {\em irreducible} if it has no nontrivial rational invariant subspaces or, equivalently, if its characteristic polynomial is irreducible over $\Q$. 
If $L$ is irreducible, or more generally, weakly irreducible, then all Lyapunov foliations for $L$ have dense leaves, and hence  the five statements in Theorem \ref{th strong} are equivalent.
We define and discuss weak irreducibility in Section \ref{irreducibility}.
\end{remark}

Joint integrability of $\w^{u}$ and $\w^{ss}$ plays an important role in the study of ergodic properties of foliation $\w^{ss}$,  and it is also related to the Lyapunov exponents on $\E^{ws}$.  It has been extensively studied, primarily on $\T^3$, see e.g. \cite{A1,GaSh,DR}. In higher dimensions, its relation to rigidity was considered by Gogolev and Shi in \cite{GSh}. 
They proved that  $(1)\iff (2)$ in general, and that $(2)\iff (4)$ under the assumption that $L$ is irreducible and 
has at most two-dimensional Lyapunov spaces.  Our assumptions on $L$ are  considerably weaker and we
obtain stronger conclusions (3) and (5). Our approach is completely different, it yields higher smoothness directly and does not rely on \cite{GSh} aside from $(1)\iff (2)$. Our techniques are mostly global, but require narrow spectrum of $Df|_{\E^{ws}}$ to use normal forms. 

Using our techniques and narrow spectrum for $Df|_{\E^{ws}}$ obtained in \cite{GSh} we estalish the following global version of Theorem \ref{th strong}. The  bunching assumption \eqref{bunch} in the theorem means that nonconformality of $Df$ on $ \E^{ws} $ is dominated by the expansion on $ \E^{u} $, and is trivially satisfied if $\dim \E^{ws} =1.$
 A splitting $\E^s= \E^{ss}\oplus \E^{ws} $ is called {\em absolutely dominated} if there exists $0<\rho<1$
such that with respect to some continuous family of Riemannian  norms  on $\E^s$ we have
$$
\|Df(u)\| < \rho < \|Df(v)\|\;\text{ 
for all unit vectors $u\in \E^{ss}$ and $v\in \E^{ws}$.}
$$
This condition automatically holds  in the setting of Theorem \ref{th strong}.

\begin{theorem} [Global rigidity of a strong foliation] \label{global strong} 
Let $L$ be an irreducible hyperbolic automorphism of $\,\T^d$ 
and let $f$ be a $C^\infty$ Anosov diffeomorphism of $\,\T^d$ conjugate to $L$ by a homeomorphism $h$. 
Suppose that  $f$  has an absolutely dominated splitting $\E^s= \E^{ss}\oplus \E^{ws} $
and satisfies the bunching condition
\begin{equation}\label{bunch}
\|Df|_{\E^{ws}(x)}\| \cdot \|(Df|_{\E^{ws}(x)})^{-1}\|  \cdot \|(Df|_{\E^{u}(x)})^{-1}\| <1\;\;\text{ for all $x\in \T^d$}.
\end{equation}
Then  the statements (1), (2), (3$\,'$), (4), (5) are equivalent, where (1), (2), (4), (5) are as 
 in Theorem \ref{th strong} and (3$\,'$) is 
 \vskip.1cm
 \begin{itemize} 
  \item[(3$'$)]\, $\w^{u}$ and $\w^{ss}$ are jointly integrable to a $C^{\infty}$ foliation $\w^{u+ss}$\\ whose holonomies preserve a H\"older continuous Riemannian metric on $\E^{ws}$.
 \end{itemize}
\end{theorem}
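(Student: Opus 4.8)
The plan is to carry out the argument of Theorem~\ref{th strong} in the global setting, with the spectral rigidity of \cite{GSh} substituting for the closeness of $f$ to $L$ used there. Since $L$ is irreducible, every Lyapunov subfoliation of $W^{s}$, in particular of $W^{ss}$ and of $W^{ws}$, has dense leaves; hence the density hypotheses present in Theorem~\ref{th strong} hold automatically, and it is enough to establish $(1)\Leftrightarrow(2)$, $(2)\Rightarrow(5)\Rightarrow(4)\Rightarrow(1)$, and $(2)\Leftrightarrow(3')$. The equivalence $(1)\Leftrightarrow(2)$ is the corresponding result of \cite{GSh} and uses no closeness.

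For the remaining implications one needs nonstationary normal forms along $\w^{ws}$. Such normal forms exist and are polynomial of a degree controlled by the ratios of Lyapunov exponents of $Df|_{\E^{ws}}$, so the issue is to control those exponents. In Theorem~\ref{th strong} this was automatic, since the exponents of $Df|_{\E^{ws}}$ are $C^{0}$-close to those of $L|_{E^{ws}}$. Here, under the standing assumption $(2)$, we invoke the spectral rigidity of \cite{GSh}: the periodic data of $Df|_{\E^{ws}}$ coincides with that of $L|_{E^{ws}}$, which pins down the narrow-band structure required. With the normal forms in hand, our result relating the $\w^{u+ss}$-holonomies to the normal forms along $\w^{ws}$ applies as in the proof of Theorem~\ref{th strong} and yields $(2)\Rightarrow(5)\Rightarrow(4)$; that is, $h^{ws}$ is $C^{\infty}$ on $\T^d$ and $h$ is a uniformly $C^{q}$ diffeomorphism along $\w^{ws}$ whenever the leaves of $\w^{ws}$ are uniformly $C^{q}$. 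The bunching condition \eqref{bunch}, automatic in the local setting, is used here to guarantee that the $\w^{u+ss}$-holonomies between weak leaves are $C^{1+\text{H\"older}}$, which is what allows them to be compared with the normal forms.

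To obtain $(3')$, note that joint integrability of $\w^{u}$ and $\w^{ss}$ to $\w^{u+ss}$ is $(2)$, and that $h(\w^{u})=W^{u}$ always while $h(\w^{ss})=W^{ss}$ by $(1)$, so $h(\w^{u+ss})=W^{u}\oplus W^{ss}$. Consequently $\w^{u+ss}$ is the foliation of $\T^d$ by the level sets of the map $x\mapsto h^{ws}(x)=x^{ws}+\d^{ws}(x)$, which is $C^{\infty}$ by $(5)$; its restriction to each $\w^{ws}$-leaf is a diffeomorphism, again by $(5)$, so $h^{ws}$ is a submersion and $\w^{u+ss}$ is a $C^{\infty}$ foliation. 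For the invariant metric, under $h$ the $\w^{u+ss}$-holonomy between two nearby weak leaves corresponds to the holonomy of $W^{u}\oplus W^{ss}$, which is a translation and hence has derivative equal to the canonical identification of the $E^{ws}$-fibres; transporting the flat Euclidean metric on $E^{ws}$ back by the fibrewise derivative $D(h|_{\w^{ws}})$, defined and H\"older continuous on $\T^d$ by $(5)$, produces a H\"older continuous Riemannian metric on $\E^{ws}$ preserved by the $\w^{u+ss}$-holonomies. This gives $(2)\Rightarrow(3')$, and $(3')\Rightarrow(2)$ is trivial. Finally $(4)\Rightarrow(1)$ is the implication $(4)\Rightarrow(1)$ of Theorem~\ref{th strong} under its density assumption, valid here since $L$ is irreducible, which closes the chain.

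The main obstacle is the transfer of the normal-form machinery to the global setting: one must verify that the matching of periodic data provided by \cite{GSh} genuinely furnishes, uniformly over $\T^d$, the spectral hypotheses under which our holonomy/normal-form result operates, and that the bunching condition \eqref{bunch} is an adequate replacement for closeness to $L$ in controlling the regularity of the $\w^{u+ss}$-holonomies. A secondary difficulty is the construction of the H\"older invariant metric in $(3')$: since $h$ is only H\"older transversally to $\w^{ws}$, the construction must rely exclusively on the regularity of $h$ \emph{along} $\w^{ws}$ supplied by $(5)$.
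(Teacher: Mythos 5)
Your outline is essentially the paper's proof: irreducibility supplies all the density hypotheses, the Gogolev--Shi results supply $(1)\iff(2)$ and the spectral input needed to rerun the normal-form/holonomy argument of Theorem \ref{th strong}, and the global failure of statement (3) is repaired exactly as you do, by producing local $C^\infty$ foliation charts (level sets of $h^{ws}$, which is the same as the paper's local diffeomorphism $\tilde h(x)=x^{u+ss}+h^{ws}(x)$) together with a H\"older metric on $\E^{ws}$ obtained by transporting a flat metric through the derivative of $h$ along $\w^{ws}$; your metric computation and the trivial direction (3$'$)$\implies$(2) match the paper, and the ``main obstacle'' you flag is resolved in the paper simply by quoting \cite[Theorem 1.2]{GSh}, which is stated as giving the dominated splitting and spectrum of $Df|_{\E^{ws}}$ matching that of $L$, hence a $(\chi,\e)$-spectrum for every $\e>0$.

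Two side remarks in your write-up are inaccurate, although neither breaks the argument. First, the bunching condition \eqref{bunch} is not automatic in the local setting (Theorem \ref{th strong} neither assumes nor needs it), and its role here is not to smooth holonomies in your proof: together with absolute domination it enters only as a hypothesis of the cited Gogolev--Shi theorems, which give (2)$\implies$(1), the identity $h(\w^{ws})=W^{ws}$ (not free in the global setting, and used implicitly throughout your argument, e.g.\ in the metric construction), and the spectral rigidity. Second, the holonomies that must be compared with the normal forms along $\w^{ws}$ are those of $\w^{ss}$ inside $\w^{s}$, and Theorem \ref{Main Hol} requires them to be uniformly $C^{r}$ with $r$ exceeding the spectral ratio $\chi_1/\chi_\ell$; mere $C^{1+\text{H\"older}}$ regularity of $\w^{u+ss}$-holonomies would not suffice. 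That $C^\infty$ regularity comes from the absolute domination of $\E^{ss}\oplus\E^{ws}$ (the transversal direction $\E^{ws}$ is itself contracted), not from \eqref{bunch}.
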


Our approach also allows us to obtain  rigidity  related to regularity of the weak foliation. We recall that $h(\w^{ws})=W^{ws}$, and hence  $\w^u$ and $\w^{ws}$ are jointly integrable  to the foliation $\w^{u+ws}=h^{-1}(W^u\oplus W^{ws})$ with uniformly $C^{1+\text{H\"older}}$ leaves. 

Let $r_{ss}(L)$ be the ratio of the top and bottom Lyapunov exponents of $L$ on $E^{ss}$, i.e., 
\begin{equation}\label{rss}
r_{ss}(L)= (\log \rho_{\min}) / (\log {\rho_{\max}})\ge 1,
\end{equation}
where $0<\rho_{\min}\le \rho_{\max}<1$ are the smallest and the largest moduli of the eigenvalues of $L$ on $E^{ss}$.

\begin{theorem} [Rigidity of a weak foliation] \label{th weak fol}
Let $L$ be a hyperbolic automorphism of $\,\T^d$ with dense leaves of $W^{ws}$. 
Let $f$ be a $C^\infty$ diffeomorphism sufficiently $C^1$ close to $L$, and let $h$ be a topological conjugacy between $f$ and $L$. If $r>r_{ss}(L)$ and $r\notin \N$, then  the following are equivalent.
 \vskip.1cm
 \begin{itemize} 
 \item[(1)]\,  $\w^{ws}$ is a uniformly $C^{r}$ subfoliation of $\w^s$,
  \vskip.1cm 
 \item[(2)]\,\,  $h^{ss}$  is a uniformly $C^\infty$ diffeomorphism along $\w^{ss}$,\,
  and  $h^{ss}$ is $ C^{r}$ on $\T^d$, 
 \vskip.1cm
  \item[(3)]\, The joint foliation $\w^{u+ws}$ is conjugate to  the linear foliation $W^{u} \oplus W^{ws}$ by a $C^{r}$ diffeomorphism. 
  \end{itemize}
 \vskip.1cm
 
\noindent If in addition $h(\w^{ss})=W^{ss}$, \, then\, \em{(1,\,2,\,3)} $\implies$ $h$  is uniformly $C^{\infty}$ along $\w^{ss}$.

\end{theorem}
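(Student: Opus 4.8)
The plan is to install non-stationary normal forms along $\w^{ss}$, use the paper's relation between holonomies and normal forms to pin the relevant holonomies as polynomial maps in those coordinates, then run the cycle $(1)\Rightarrow(2)\Rightarrow(3)\Rightarrow(1)$, with $(1)\Rightarrow(2)$ as the core; the added conclusion follows at once from $(2)$. First the normal forms: since $f$ is $C^{1}$-close to $L$ and $r>r_{ss}(L)$ with $r\notin\N$, the bundle $\E^{ss}$ has narrow band spectrum, with sub-resonance relations of degree at most $N:=\lfloor r_{ss}(L)\rfloor<r$. By the theory of non-stationary normal forms for contracting invariant foliations there is a family $\{\phi_{x}\colon\w^{ss}(x)\to\E^{ss}_{x}\}$ of $C^{\infty}$ diffeomorphisms, normalized by $\phi_{x}(x)=0$, $D_{x}\phi_{x}=\Id$, continuous in $x$ in the $C^{\infty}$ topology on leaves, conjugating $f$ to sub-resonance polynomial maps $P_{x}$ of degree $\le N$; the normalized normal form is unique. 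For $L$ the corresponding objects along $W^{ss}$ are the affine identifications $W^{ss}(q)\cong E^{ss}$ and the linear maps $L|_{E^{ss}}$.

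\textbf{Holonomies and the conjugacy in normal form coordinates.} The foliation $\w^{ss}$ is complementary and transverse both to $\w^{u+ws}$ and, inside $\w^{s}$-leaves, to $\w^{ws}$, so in either case there are holonomy maps $H_{x,y}\colon\w^{ss}(x)\to\w^{ss}(y)$. Since $\E^{u}\oplus\E^{ws}$ and $\E^{ws}$ are $Df$-invariant, these commute with $f$ up to relabeling of leaves; combining this with the usual smoothing along the strong stable direction and the paper's relation between holonomies and normal forms, one obtains that $\phi_{y}\circ H_{x,y}\circ\phi_{x}^{-1}$ is a sub-resonance polynomial of degree $\le N$, continuous in $(x,y)$ and a translation in the $L$-model. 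Next, let $\pi^{ss}$ be the projection of $\R^{d}$ onto $E^{ss}$ along $E^{u}\oplus E^{ws}$ and, on the universal cover, set $\bar h_{x}:=\pi^{ss}\circ\tilde h|_{\w^{ss}(x)}=h^{ss}|_{\w^{ss}(x)}$. Because $h(\w^{u+ws})=W^{u}\oplus W^{ws}$ and $h(\w^{ss})\subset W^{s}$, the map $\pi^{ss}\circ\tilde h$ is locally constant along $\w^{u}$, $\w^{ws}$, hence $\w^{u+ws}$; so $\bar h_{y}\circ H_{x,y}=\bar h_{x}$, while projecting $h\circ f=L\circ h$ gives $\bar h_{f(x)}\circ f=(L|_{E^{ss}})\circ\bar h_{x}$. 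Thus $g_{x}:=\bar h_{x}\circ\phi_{x}^{-1}$ conjugates the cocycle $P_{\bullet}$ to the constant cocycle $L|_{E^{ss}}$, and $g_{y}^{-1}\circ g_{x}=\phi_{y}\circ H_{x,y}\circ\phi_{x}^{-1}$ is a sub-resonance polynomial whenever $x,y$ lie on one $\w^{s}$-leaf.

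\textbf{The cycle.} For $(2)\Rightarrow(1)$ I would argue directly: $h^{ss}$ is locally constant along $\w^{ws}$ and, by hypothesis, $C^{r}$ on $\T^{d}$ and a diffeomorphism along $\w^{ss}$, so on each $\w^{s}$-leaf it is a $C^{r}$ submersion whose fibers are exactly the $\w^{ws}$-leaves, hence $\w^{ws}$ is a uniformly $C^{r}$ subfoliation of $\w^{s}$. The heart is $(1)\Rightarrow(2)$: under $(1)$ the $\w^{ws}$-holonomies $H_{x,y}$ inside $\w^{s}$ are $C^{r}$, so each $g_{y}^{-1}\circ g_{x}$ is $C^{r}$; feeding this, together with the facts that $g_{y}^{-1}\circ g_{x}$ is also a sub-resonance polynomial of degree $N<r$ and that $g_{x}$ conjugates $P_{x}$ to the linear $L|_{E^{ss}}$, into the homological equations for the jets of $g_{x}$, and globalizing via density of the leaves of $W^{ws}$ and a Livsic-type argument over periodic orbits, one forces the $P_{x}$ to be linear, $Df|_{\E^{ss}}$ to be cohomologous to the constant $L|_{E^{ss}}$, and the $g_{x}$ to be sub-resonance polynomials depending continuously on $x$. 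Then $\bar h_{x}=g_{x}\circ\phi_{x}$ is uniformly $C^{\infty}$ along $\w^{ss}$; and since $h^{ss}$ is also locally constant (hence uniformly $C^{\infty}$) along $\w^{u}$ and $\w^{ws}$, while $(1)$ makes the leaves of $\w^{u+ws}$ uniformly $C^{r}$ (the $\w^{u}$- and $\w^{ws}$-subfoliations of a $\w^{u+ws}$-leaf are dominated, the weaker one $C^{r}$), Journ\'e's lemma for the transverse pair $\w^{ss},\w^{u+ws}$ gives $h^{ss}\in C^{r}(\T^{d})$, i.e.\ $(2)$. For $(2)\Rightarrow(3)$: with $(2)$ in hand, and hence $(1)$, the leaves of $\w^{u+ws}$ are uniformly $C^{r}$ and carry $C^{r}$ internal coordinates assembled from the smooth subfoliation $\w^{u}$ and the $C^{r}$ subfoliation $\w^{ws}$, while $h^{ss}$ is a $C^{r}$ transverse coordinate constant on them; gluing the two (a partition of unity, or averaging along the dynamics) produces a $C^{r}$ diffeomorphism of $\T^{d}$ sending $\w^{u+ws}$ to $W^{u}\oplus W^{ws}$. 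Finally $(3)\Rightarrow(1)$: pulling the linear foliation back by a $C^{r}$ diffeomorphism makes $\w^{u+ws}$ a $C^{r}$ foliation, and intersecting its charts with the $C^{\infty}$ leaves of $\w^{s}$ exhibits $\w^{ws}$ as a $C^{r}$ subfoliation of $\w^{s}$.

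\textbf{The added conclusion, and the main obstacle.} If in addition $h(\w^{ss})=W^{ss}$, then for $z\in\w^{ss}(x)$ one has $h(z)\in W^{ss}(h(x))\subset W^{s}(h(x))$, so $h^{u}$ and $h^{ws}$ are locally constant along $\w^{ss}$; together with $(2)$, which gives $h^{ss}$ uniformly $C^{\infty}$ along $\w^{ss}$, this makes $h=(h^{u},h^{ss},h^{ws})$ uniformly $C^{\infty}$ along $\w^{ss}$. The main obstacle is the step $(1)\Rightarrow(2)$: the regularity $(1)$ only supplies a $C^{r}$ relation between the fiberwise coordinate changes $g_{x}$, not their individual smoothness, and converting that into smoothness of $h^{ss}$ along $\w^{ss}$ is exactly where $r>r_{ss}(L)$, the polynomiality of the holonomies in normal form coordinates, and the Livsic-type globalization identifying $Df|_{\E^{ss}}$ with the constant $L|_{E^{ss}}$ are all indispensable. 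Once smoothness along $\w^{ss}$ is secured, the transverse (Journ\'e) step and the construction of the straightening diffeomorphism in $(3)$ are comparatively routine.
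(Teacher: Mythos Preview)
The setup is right: normal forms $\phi_x$ on $\w^{ss}$, the projection $\bar h_x=h^{ss}|_{\w^{ss}(x)}$, the relation $\bar h_{f(x)}\circ f=L|_{E^{ss}}\circ\bar h_x$, and, under (1), the fact that $\phi_y\circ H_{x,y}\circ\phi_x^{-1}\in\s_{x,y}$ for $y\in\w^{ws}(x)$ via the paper's holonomy--normal form theorem. The gap is in your mechanism for $(1)\Rightarrow(2)$. You propose to deduce that $P_x$ is linear, $Df|_{\E^{ss}}$ is cohomologous to the constant $L|_{E^{ss}}$, and $g_x=\bar h_x\circ\phi_x^{-1}$ is a sub-resonance polynomial, via ``homological equations for the jets of $g_x$'' and a ``Livsic-type argument over periodic orbits.'' But a Livsic argument needs matching periodic data, and nothing in the hypotheses gives that: at an $f^n$-fixed point $p$ you only know that the polynomial $P_p^n$ is \emph{topologically} conjugate (by the a priori only $C^0$ map $g_p$) to $L^n|_{E^{ss}}$, which carries no spectral information. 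Moreover the target conclusions are stronger than what actually holds: once (2) is established there is still no reason for $g_x$ to be polynomial or $P_x$ to be linear.

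The paper's argument avoids all of this with an idea your sketch does not contain. Fix $x$ and $y\in\w^{ss}(x)$; using density of $W^{ws}$, choose $y_n\in\w^{ws}(x)$ with $y_n\to y$. The lifted holonomies $\phi_{y_n}\circ H_{x,y_n}\circ\phi_x^{-1}\in\s_{x,y_n}$ converge, and composing with the normal-form transition $\phi_x\circ\phi_y^{-1}$ one finds that $\bar h_x$ conjugates every translation $H_{\hat v}$ of $W^{ss}(h(x))$ to an element of the finite-dimensional Lie group $\bar\s_x$ generated by $\s_x$ and translations of $\E^{ss}_x$. Thus $\hat v\mapsto(\bar h_x)^{-1}\circ H_{\hat v}\circ\bar h_x$ is a \emph{continuous homomorphism} $\eta_x\colon E^{ss}\to\bar\s_x$; a continuous homomorphism between Lie groups is automatically $C^\infty$, and $(\bar h_x)^{-1}$ is the orbit map $\hat v\mapsto\eta_x(\hat v)(0)$, hence $C^\infty$. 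This is the missing idea. Your outlines of the other implications are essentially correct, though for $(2)\Rightarrow(3)$ the paper writes down the explicit map $\tilde h(x)=x+\Delta^{ss}(x)$ rather than an unspecified gluing, and $(3)\Rightarrow(1)$ is exactly the intersection with $\w^s$ you describe.
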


This theorem follows from a more general technical result, Theorem \ref{th weak hol}, where we assume only  regularity of the holonomies of $\w^{ws}$ between the leaves of $\w^{ss}$, rather than regularity of the subfoliation. The distinction is not assuming higher regularity of the leaves of $\w^{ws}$, which in general are 
only $C^{1+\text{H\"older}}$.

\vskip.3cm

Combining rigidity of the weak and strong subfoliations, Theorems \ref{th strong} and \ref{th weak fol}, we obtain the following characterizations of smoothness of the conjugacy along $\w^s$.

\begin{theorem}  [Rigidity of strong and weak foliations]\label{strong+weak}  
Let $L$ be a hyperbolic automorphism of $\,\T^d$.
Suppose that $W^{ws}$ and $W^{ss}$ have dense leaves. Let $f$ be a $C^\infty$ diffeomorphism sufficiently $C^1$ close to $L$, and let $h$ be a topological conjugacy between $f$ and $L$.
Then the following are equivalent. 
\vskip.1cm 
\begin{itemize}
\item[(1)]\,  $h(\w^{ss})=W^{ss}$ and $\w^{ws}$ is a uniformly $C^\infty$ subfoliation of $\w^s$,
 \vskip.1cm
\item[(2)]\,  $h$ a is uniformly $C^\infty$ diffeomorphism along $\w^{s}$,
 \vskip.1cm
\item[(2$'$)] $h^s$  is in  $C^\infty(\T^d)$ and a diffeomorphism along $\w^{s}$,
 \vskip.1cm
\item[(3)] $\w^{u}$ is conjugate to the linear  foliation $W^u$ by a $C^\infty$ diffeomorphism.
\end{itemize}
\end{theorem}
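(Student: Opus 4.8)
The plan is to reduce the theorem to Theorems~\ref{th strong} and~\ref{th weak fol}, extracting from each the appropriate one-sided regularity of $h$ and then combining the two sides by a Journé-type argument. Assume (1): $h(\w^{ss})=W^{ss}$ and $\w^{ws}$ is a uniformly $C^\infty$ subfoliation of $\w^s$. The equality $h(\w^{ss})=W^{ss}$ is statement (1) of Theorem~\ref{th strong}, so its statement (5) holds; since the leaves of $\w^{ws}$ are now uniformly $C^q$ for every $q$, this gives $h^{ws}\in C^\infty(\T^d)$ and that $h$ is a uniformly $C^\infty$ diffeomorphism along $\w^{ws}$. Also, $\w^{ws}$ being a uniformly $C^\infty$ subfoliation of $\w^s$ is, for every non-integer $r>r_{ss}(L)$, statement (1) of Theorem~\ref{th weak fol}, so its statement (2) gives that $h^{ss}$ is a uniformly $C^\infty$ diffeomorphism along $\w^{ss}$ and $h^{ss}\in C^r(\T^d)$ for all such $r$, hence $h^{ss}\in C^\infty(\T^d)$; since in addition $h(\w^{ss})=W^{ss}$, the last assertion of Theorem~\ref{th weak fol} makes $h$ a uniformly $C^\infty$ diffeomorphism along $\w^{ss}$ as well. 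Thus (1) already yields: $h$ is a uniformly $C^\infty$ diffeomorphism along each of the transverse subfoliations $\w^{ss}$ and $\w^{ws}$ of $\w^s$, and $h^s=h^{ss}+h^{ws}\in C^\infty(\T^d)$.

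Next I would work inside a single leaf of $\w^s$, which under (1) carries the two transverse subfoliations $\w^{ss}$ and $\w^{ws}$, both with uniformly $C^\infty$ leaves and with tangent bundles spanning $\E^s$. Since $h$ restricted to the leaf is uniformly $C^\infty$ along each of these, Journé's lemma (applied inside the leaf, with uniformity over the leaves of $\w^s$) shows that $h$ is uniformly $C^\infty$ along $\w^s$. As $h$ maps $\w^{ss}$-leaves onto $W^{ss}$-leaves and $\w^{ws}$-leaves onto $W^{ws}$-leaves, its leafwise derivative sends $\E^{ss}_x$ onto $E^{ss}$ and $\E^{ws}_x$ onto $E^{ws}$, so $D(h|_{\w^s(x)})\colon\E^s_x\to E^s$ is an isomorphism; hence $h$ is a uniformly $C^\infty$ diffeomorphism along $\w^s$. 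This proves $(1)\Rightarrow(2)$, and together with $h^s\in C^\infty(\T^d)$ it proves $(1)\Rightarrow(2')$. For $(2)\Rightarrow(1)$: the leafwise derivative of $h|_{\w^s}$ is an isomorphism $\E^s\to E^s$ over $h$ conjugating the cocycle $Df|_{\E^s}$ to the constant cocycle $L|_{E^s}$, so it carries the dominated splitting $\E^s=\E^{ss}\oplus\E^{ws}$ to $E^s=E^{ss}\oplus E^{ws}$; integrating gives $h(\w^{ss})=W^{ss}$, and $\w^{ws}=(h|_{\w^s})^{-1}(W^{ws})$ is a uniformly $C^\infty$ subfoliation of $\w^s$ because $h|_{\w^s}$ is a uniformly $C^\infty$ diffeomorphism and $W^{ws}$ is linear. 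Finally $(2')\Rightarrow(2)$ is elementary: $h(\w^s(x))=W^s(h(x))$ is parallel to $E^s$, so $h^u$ is constant on each leaf of $\w^s$, and there $h$ equals a constant in $E^u$ plus the restriction of the global $C^\infty$ map $h^s$; since $\w^s$ has uniformly $C^\infty$ leaves, $h$ is uniformly $C^\infty$ along $\w^s$, and a diffeomorphism there by the last clause of $(2')$. Hence $(1)\Leftrightarrow(2)\Leftrightarrow(2')$.

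To bring in (3): for $(2')\Rightarrow(3)$, define on the universal cover, in coordinates $\R^d=E^u\oplus E^s$, the map $g(x)=(x^u,h^s(x))$. Because $h(x+n)=h(x)+n$, this descends to $\T^d$; it is $C^\infty$ by $(2')$; its differential $v\mapsto(v^u,Dh^s(v))$ is invertible because $h^s$ is constant along the leaves of $\w^u$, so $Dh^s$ vanishes on $\E^u$, while $Dh^s|_{\E^s_x}$ is the invertible leafwise derivative of $h|_{\w^s(x)}$; and a local diffeomorphism of $\T^d$ homotopic to the identity is a diffeomorphism. Since each leaf of $\w^u$ is a graph over $E^u$ on which $h^s$ is constant, $g$ carries it onto a translate of $E^u$, so $g(\w^u)=W^u$. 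Conversely, if a $C^\infty$ diffeomorphism $g$ satisfies $g(\w^u)=W^u$, then $\w^u$ is a $C^\infty$ foliation, so the holonomy $\pi^u$ projecting its leaves onto a fixed $E^s$-transversal is $C^\infty$; on that transversal $h^s$ and $g^s$ satisfy the same functional equation relating the monodromy of $\w^u$ to translation by $n^s$, so, the $\w^u$-leaves being dense, the two differ by a constant and $h^s=F\circ\pi^u\in C^\infty(\T^d)$ for a $C^\infty$ map $F$ built from $g^s$; with the invertibility of the leafwise derivative of $h|_{\w^s}$ this yields $(2')$, closing all the equivalences.

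The main obstacle is the passage from one-sided smoothness of $h$ along $\w^{ss}$ and along $\w^{ws}$ to smoothness along $\w^s$. Journé's lemma delivers it, but it applies only because hypothesis (1) upgrades the leaves of $\w^{ws}$ from $C^{1+\text{H\"older}}$ (all one has in general) to $C^\infty$, so that inside each $\w^s$-leaf there really are two transverse smooth subfoliations spanning the tangent space; this is precisely the combined information provided by the conclusions of Theorems~\ref{th strong} and~\ref{th weak fol}. A secondary point is $(3)\Rightarrow(2')$, where smoothness of the transverse foliation $\w^u$ is converted into global smoothness of $h^s$: here the density of the leaves is what forces the two transverse induced maps to differ only by a constant.
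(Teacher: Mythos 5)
Your handling of (1)$\Rightarrow$(2),(2$'$), of (2)$\Leftrightarrow$(2$'$), and of (2$'$)$\Rightarrow$(3) is essentially the paper's argument: (1)$\Rightarrow$(2$'$) by combining Theorem~\ref{th strong}\,(1)$\Rightarrow$(5) with Theorem~\ref{th weak fol}\,(1)$\Rightarrow$(2) (the paper writes $h^s=h^{ws}+h^{ss}\in C^\infty(\T^d)$ and passes to (2) via $h|_{\w^s}=h^s|_{\w^s}$ instead of running Journ\'e inside the $\w^s$-leaves, but that difference is cosmetic), and (2$'$)$\Rightarrow$(3) via the same map $\tilde h(x)=x^u+h^s(x)$ with the same block-triangularity argument for $D\tilde h$. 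Your explicit (2)$\Rightarrow$(1) (the leafwise derivative conjugates $Df|_{\E^s}$ to $L|_{E^s}$, hence carries $\E^{ss}\oplus\E^{ws}$ to $E^{ss}\oplus E^{ws}$) is a step the paper leaves implicit, and it is correct.

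The genuine divergence, and the gap, is (3)$\Rightarrow$(2$'$). The paper proves (3)$\Rightarrow$(2) differently: it transports the standard metric to a continuous metric on $T\w^s$ that is uniformly smooth along the $\w^s$-leaves and invariant under the (now smooth) $\w^u$-holonomies, and reruns the holonomy/Lie-group argument of Theorem~\ref{th strong} with the isometry group of each leaf in place of $\bar\s_x$; this produces $h|_{\w^s(x)}$ directly as a uniformly $C^\infty$ \emph{diffeomorphism}. Your route, factoring $h^s=F\circ\pi^u$ through the leaf space of $\w^u$ and recovering $F$ from $g^s$ via density of the $W^u$-leaves, has two problems as written. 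First, ``$h^s$ and $g^s$ satisfy the same functional equation \dots\ the two differ by a constant'' is not accurate: $g$ need not be homotopic to the identity, so the deck equivariances read $\bar h(T_nx)=\bar h(x)+n^s$ but $\bar g(T_nx)=\bar g(x)+(An)^s$ with $A$ the automorphism induced on $H_1$; what density of $\{n^s\}$ actually forces is that $\bar g\circ\bar h^{-1}$ is an \emph{affine} map of $E^s$ --- still enough to make $F$ smooth, but the argument has to be made. Second, and more seriously, the closing clause ``with the invertibility of the leafwise derivative of $h|_{\w^s}$ this yields (2$'$)'' assumes exactly the half of (2$'$) that remains to be proved: under (3) alone, once you know $h^s\in C^\infty(\T^d)$ you only know that $h|_{\w^s(x)}$ is a smooth homeomorphism onto $W^s(h(x))$, and a smooth homeomorphism can have degenerate derivative (as $x\mapsto x^3$ shows), so ``diffeomorphism along $\w^s$'' does not come for free --- this is precisely the point the paper's isometric-holonomy argument is designed to settle. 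Your factorization can be salvaged: $D\pi^u$ has kernel exactly $\E^u_x$ and hence restricts to an isomorphism $\E^s_x\to E^s$, so it suffices to show that $F$ is a diffeomorphism of the transversal, which in turn requires checking that the induced leaf-space map $\bar g=g^s|_{E^s}$ is a local diffeomorphism (true, because $Dg$ maps $\E^u_x$ onto $E^u$ and so induces an isomorphism of normal bundles). None of this is in your proposal, so as it stands the implication (3)$\Rightarrow$(2$'$) is incomplete.
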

We note that the results above do not yield bootstrap of regularity of $h$ from $C^1$ to $C^\infty$
without knowing higher regularity of the leaves of $\w^{ws}$. However, we obtain a stronger bootstrap
for symplectic $L$ and $f$.
\vskip.3cm

\noindent {\bf Symplectic case.} Now we apply the above results to obtain rigidity results for symplectic $L$ and $f$. 

 Let $L$ be a symplectic hyperbolic automorphism of $\T^d$ with dominated splittings 
\begin{equation}\label{symp split}
E^s= E^{ss}\oplus E^{ws}\; \text{ and }\; E^u= E^{uu}\oplus E^{wu}
\end{equation}
such that  $\dim E^{ss}=\dim E^{uu}$ and hence $\dim E^{ws}=\dim E^{wu}$. Here $E^{uu}$ and $E^{wu}$ are strong and weak unstable subbundles.
Let $f$ be a $C^1$ small perturbation of $L$.
Then $f$ has corresponding dominated splittings
$$
\E^s= \E^{ss}\oplus\E^{ws}\; \text{ and }\; \E^u= \E^{uu}\oplus\E^{wu}.
$$
The subbundle $\E^{ws+wu}$ is integrable with $C^{1+\text{H\"older}}$ leaves,  and $h(\w^{ws+wu})=W^{ws+wu}$.

\begin{theorem} [Symplectic rigidity] \label{symp}
Let $L$ be a  symplectic hyperbolic automorphism of $\,\T^d$.
Suppose that the foliations $W^{ws}$, $W^{ss}$, $W^{wu}$, and $W^{uu}$ have dense leaves. Let $f$ be a $C^\infty$ diffeomorphism sufficiently $C^1$ close to $L$ and preserving a $C^\infty$ symplectic form.
Let $h$ be a topological conjugacy between $f$ and $L$. Then for the statements below we have
$$
(1) \iff (2) \iff (3) \implies (4).
$$
\noindent Moreover, if the leaves of each of the Lyapunov subfoliations  of $W^{ws+wu}$ are dense in $\T^d$, then the four statements are equivalent.
\vskip.1cm 
\begin{itemize}
\item[(1)]\, $h$ is a $C^\infty$ diffeomorphism,
\vskip.1cm 
\item[(2)]\,  $h(\w^{ss})=W^{ss}$ and  $\,h(\w^{uu})=W^{uu}$, 
\vskip.1cm 
\item[(3)]\,  $h$ is $\a$-H\"older with $\a$ sufficiently close to 1, 
\vskip.1cm

\item[(4)]\, $h$ is a $C^{1}$ diffeomorphism along the leaves of $\w^{ws+wu}$,  and 
\,the derivative 
$D(h|_{\w^{ws+wu}(x)}):\,\E^{ws+wu}_x \to \R^d $ is H\"older continuous on $\T^d$.
\end{itemize}
\end{theorem}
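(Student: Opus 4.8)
The plan is to derive Theorem~\ref{symp} from Theorems~\ref{th strong} and~\ref{strong+weak}, applying each of them to both $f$ and $f^{-1}$, with the $f$-invariant $C^\infty$ symplectic form $\omega$ supplying the one extra ingredient that converts the one-sided hypotheses of those theorems into the two-sided smoothness in~(1). I first dispose of the easy implications. $(1)\Rightarrow(3)$ holds because a $C^\infty$ diffeomorphism is Lipschitz. For $(3)\Rightarrow(2)$ I would use the standard contraction estimate along the strong foliation: for $y\in\w^{ss}(x)$ one has $\dist\bigl(L^nh(y),L^nh(x)\bigr)=\dist\bigl(h(f^ny),h(f^nx)\bigr)\le C\,\dist(f^ny,f^nx)^{\a}$, which decays at a rate close to the $\a$-th power of the slowest contraction of $Df$ on $\E^{ss}$; since $h(\w^s)=W^s$, the displacement of $h(y)$ from $h(x)$ inside $W^s(h(x))$ contracts at that rate, and for $\a$ close enough to $1$ this beats the slowest contraction of $L$ on $E^{ws}$, forcing the $W^{ws}$-component of the displacement to vanish, so $h(\w^{ss}(x))\subseteq W^{ss}(h(x))$; equality follows by a dimension count together with density of $W^{ss}$, and the time-reversed argument gives $h(\w^{uu})=W^{uu}$. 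Finally $(1)\Rightarrow(4)$: a global $C^\infty$ diffeomorphism restricts to a $C^{1+\text{H\"older}}$ diffeomorphism along the $C^{1+\text{H\"older}}$ leaves of $\w^{ws+wu}$, with leafwise derivative the restriction of the smooth map $Dh$.

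The heart of the proof is $(2)\Rightarrow(1)$. Applying Theorem~\ref{th strong} to $f$ with the hypothesis $h(\w^{ss})=W^{ss}$ gives its statement~(3): $\w^u$ and $\w^{ss}$ are jointly integrable and $\w^{u+ss}$ is $C^\infty$-conjugate to a linear foliation, so $T\w^{u+ss}=\E^u\oplus\E^{ss}$ is a $C^\infty$ distribution on $\T^d$; applying it to $f^{-1}$ (whose strong stable foliation is $\w^{uu}$) with $h(\w^{uu})=W^{uu}$ gives likewise that $T\w^{s+uu}=\E^s\oplus\E^{uu}$ is $C^\infty$. Here the symplectic structure enters: since $\omega$ is $f$-invariant and $f$ is $C^1$-close to the symplectic $L$, an invariance-plus-growth argument shows that $\E^s$ and $\E^u$ are $\omega$-isotropic, that $\omega(\E^{ss},\E^{wu})=0$ and $\omega(\E^{ws},\E^{uu})=0$ identically (the products of the relevant contraction/expansion rates are $<1$), and that $\omega$ restricts to a nondegenerate pairing between $\E^{ss}$ and $\E^{uu}$. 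Hence on the distribution $T\w^{s+uu}=\E^{ss}\oplus\E^{ws}\oplus\E^{uu}$ the form $\omega$ has constant-rank kernel equal to $\E^{ws}$. Since $\omega$ and $T\w^{s+uu}$ are $C^\infty$, this kernel is a $C^\infty$ subbundle, so $\E^{ws}$ is $C^\infty$ and $\w^{ws}$ is a uniformly $C^\infty$ subfoliation of $\w^s$; symmetrically $\w^{wu}$ is a uniformly $C^\infty$ subfoliation of $\w^u$. Now Theorem~\ref{strong+weak} applies to $f$ (its hypothesis~(1): $h(\w^{ss})=W^{ss}$ and $\w^{ws}$ a uniformly $C^\infty$ subfoliation of $\w^s$) and yields that $h$ is uniformly $C^\infty$ along $\w^s$; the same theorem for $f^{-1}$ yields that $h$ is uniformly $C^\infty$ along $\w^u$. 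As $\w^s$ and $\w^u$ are transverse continuous foliations with uniformly $C^\infty$ leaves spanning $T\T^d$, Journ\'e's lemma gives $h\in C^\infty(\T^d)$, i.e.~(1).

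It remains to prove, under the additional hypothesis that each Lyapunov subfoliation of $W^{ws+wu}$ has dense leaves, that $(4)\Rightarrow(2)$, so that all four statements become equivalent. Restricting the $C^1$-with-H\"older-derivative property of $h$ along $\w^{ws+wu}$ to the subfoliation $\w^{ws}$ produces precisely statement~(4) of Theorem~\ref{th strong} for $f$; since each Lyapunov subfoliation of $W^{ws}$ is also one of $W^{ws+wu}$ and hence has dense leaves, that theorem gives $h(\w^{ss})=W^{ss}$, and the time-reversed version gives $h(\w^{uu})=W^{uu}$, which is~(2). Together with the cycle $(1)\Rightarrow(3)\Rightarrow(2)\Rightarrow(1)$ and $(1)\Rightarrow(4)$, this establishes the theorem.

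The main obstacle I anticipate is making the symplectic step rigorous: verifying that the dominated splittings of $f$ are genuinely symplectically dual, so that $\omega$ has the indicated block form with \emph{exact} (not merely approximate) vanishing of the off-pairings, and checking that the kernel of $\omega|_{T\w^{s+uu}}$ has constant rank, hence is an honest $C^\infty$ subbundle rather than only a measurable one. A secondary chore is the bookkeeping in applying Theorems~\ref{th strong} and~\ref{strong+weak} to $f^{-1}$ and verifying compatibility of the hypotheses — e.g., that statement~(4) of Theorem~\ref{symp} implies statement~(4) of Theorem~\ref{th strong} for $f$, and that the required density assumptions for $(L^{-1},f^{-1})$ are all present.
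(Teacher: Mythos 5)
Your proposal is correct and takes essentially the same route as the paper: the easy implications, (3)$\implies$(2) via the H\"older contraction comparison, and (4)$\implies$(2) via Theorem \ref{th strong}(4)$\implies$(1) applied to $f$ and $f^{-1}$ are the paper's arguments, and for (2)$\implies$(1) you likewise use Theorem \ref{th strong}(1)$\implies$(3) for $f$ and $f^{-1}$ to make $\E^{u+ss}$ and $\E^{s+uu}$ smooth, use the invariant symplectic form to upgrade the weak data to $C^\infty$, and conclude with Theorem \ref{strong+weak} for $f$ and $f^{-1}$. The only cosmetic differences are that the paper gets $\E^{ws+wu}$ as the symplectic orthogonal of the intersection $\E^{ss+uu}=\E^{s+uu}\cap\E^{u+ss}$ and then intersects the resulting $C^\infty$ foliation with $\w^s$ and $\w^u$, whereas you extract $\E^{ws}$ (and $\E^{wu}$) as the kernel of $\omega$ restricted to $\E^{s+uu}$ (resp. $\E^{u+ss}$), and the paper assembles global smoothness of $h$ as $h^s+h^u$ from statement (2$'$) of Theorem \ref{strong+weak} rather than by applying Journ\'e to $h$ along $\w^s$ and $\w^u$ — both equivalent in substance.
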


\noindent Preservation of a symplectic form is used only for symplectic orthogonality of 
$\E^{ws}\oplus\E^{wu}$ to $\E^{ss}\oplus\E^{uu}$, which allows us to deduce smoothness
of the former from that of the latter.

Condition (2) in Theorem \ref{symp} implies joint integrability of $\w^{uu}$ with $\w^{ss}$. 
It is an open question whether the converse holds. For an irreducible $L$ with {\it one-dimensional}
weak foliations  $W^{wu}$ and $W^{ws}$, Gogolev and Shi showed 
 in the proof of \cite[Theorem 6.1]{GSh} that joint integrability of $\w^{uu}$ and $\w^{ss}$ implies (2). 
Using this result together with the implication $(2) \implies (1)$ of Theorem \ref{symp} we obtain
 the following corollary. It extends \cite[Theorem 1.4]{GSh} from $d=4$ to any $d\ge 4$.
 
\begin{corollary} \label{symp cor}
Let $L$ be an irreducible symplectic hyperbolic automorphism of $\,\T^d$ with 
splitting \eqref{symp split} such that $\dim E^{ws}= \dim E^{wu}=1$.  Let $f$ be a $C^\infty$ diffeomorphism sufficiently $C^1$ close to $L$ and preserving a $C^\infty$ symplectic form.
If the strong foliations $\w^{uu}$ and $\w^{ss}$ are jointly integrable then $f$ is $C^\infty$ conjugate to $L$.
\end{corollary}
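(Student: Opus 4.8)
The plan is to deduce the corollary from Theorem \ref{symp} together with the joint-integrability criterion of Gogolev and Shi, with essentially no new work. First I would verify that all hypotheses of Theorem \ref{symp} hold, including the one needed for the full equivalence of statements (1)--(4). By assumption $L$ is irreducible, so all of its Lyapunov foliations have dense leaves (Section \ref{irreducibility}); in particular $W^{ws}$, $W^{ss}$, $W^{wu}$, and $W^{uu}$ have dense leaves, and since $\dim E^{ws}=\dim E^{wu}=1$ the Lyapunov subfoliations of $W^{ws+wu}$ are exactly $W^{ws}$ and $W^{wu}$, which are therefore dense as well. Together with the standing assumptions that $f$ is a $C^\infty$ diffeomorphism $C^1$-close to $L$ preserving a $C^\infty$ symplectic form and that $h$ is a topological conjugacy, this puts us in the setting where the four statements of Theorem \ref{symp} are equivalent.

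Next I would bring in the input from \cite{GSh}: as recalled immediately before the corollary, for an irreducible $L$ with one-dimensional weak foliations the joint integrability of $\w^{uu}$ and $\w^{ss}$ implies statement (2) of Theorem \ref{symp}, namely $h(\w^{ss})=W^{ss}$ and $h(\w^{uu})=W^{uu}$. This is precisely our hypothesis, so statement (2) holds.

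Finally, applying the equivalence (1)$\iff$(2) of Theorem \ref{symp}, statement (2) forces statement (1): $h$ is a $C^\infty$ diffeomorphism. Hence $f$ is $C^\infty$ conjugate to $L$, as claimed.

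Since the whole argument is a combination of already-established results, there is no genuine obstacle here; the only points that require attention are confirming that irreducibility supplies every density condition demanded by Theorem \ref{symp} — in particular the density of the Lyapunov subfoliations of $W^{ws+wu}$, which is exactly what promotes the chain of implications to a full equivalence — and checking that the notion of joint integrability of the strong foliations in \cite{GSh} coincides with the one used here, so that the cited implication can be invoked verbatim.
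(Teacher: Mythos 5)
Your proposal is correct and follows the same route as the paper: the paper's own justification is exactly the remark preceding the corollary, namely that joint integrability of $\w^{uu}$ and $\w^{ss}$ gives statement (2) of Theorem \ref{symp} via \cite[Theorem 6.1]{GSh}, after which the equivalence (2)$\iff$(1) of Theorem \ref{symp} (available since irreducibility of $L$ makes all Lyapunov foliations, in particular the one-dimensional $W^{ws}$ and $W^{wu}$, dense) yields that $h$ is $C^\infty$. Your added checks on the density hypotheses are exactly the implicit verifications the paper relies on.
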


\begin{remark} [Finite regularity]Our results hold if $f$ is a $C^t$ diffeomorphism rather than $C^\infty$, 
provided that $t$ is sufficiently large, and  with the  $C^\infty$ regularity of other objects replaced
by  $C^{t-\delta}$ for any $\delta >0$. Theorems \ref{th strong} and \ref{global strong} 
require $t>r_{ws}$ defined similarly to \eqref{rss}. In particular, if $L$ has one Lyapunov exponent
on $E^{ws}$ then any $t>1$ suffices. Theorem \ref{th weak fol}  requires $t>r$,  
Theorem \ref{strong+weak} requires $t>\max\{r_{ws},r_{ss}\}$, 
Theorem \ref{symp} requires $t>\max\{r_{wu},r_{uu},r_{ws},r_{ss}\}$, and 
Corollary \ref{symp cor} requires $t>\max\{r_{uu},r_{ss}\}$. These are the regularities needed to apply the results on normal forms. The proofs work without any significant modifications.
\end{remark}

\vskip.2cm 
\noindent{\bf Normal forms and holonomies.} The proofs of the theorems above rely on our new results on normal forms and holonomies, which are of independent interest.  We give preliminaries on normal forms  
in Section \ref{sec NF prelim} and formulate and prove the new results in Section \ref{sec NF fol}. 
In the context of two invariant transverse subfoliations $\w$ and $\v$ of $\w^s$ we prove in Theorem \ref{Main Hol} 
that if $Df|_{T\w}$ has narrow spectrum  and the holonomies of $\v$ between  $\w$ are sufficiently 
smooth then they preserve normal forms, i.e., they are sub-resonance polynomials when written
 in normal form coordinates. This allows us to use holonomies along  $\v$ together with
density of its leaves to obtain regularity of the conjugacy along $\w$. Prior results using normal forms 
with holonomies were in the context of {\em neutral} foliations \cite{FKS,GKS23} and they do not apply to expanding or contracting foliations. We use our results
with both $\w=\w^{ss}$ and $\w=\w^{ws}$. In the latter case, even existence of normal forms is 
nontrivial since the leaves  of  $\w^{ws}$ have low regularity. We overcome this problem
 in Theorem \ref{NF weak} by using smooth holonomies of $\v=\w^{ss}$.

\vskip.2cm 
\noindent{\bf Structure of the paper.} 
In Section \ref{ex+irred} we give examples to illustrate the main theorems, and discuss irreducibility.
In Section \ref{sec NF prelim} we give preliminaries on normal forms, and in Section \ref{sec NF fol} formulate and prove the new results. We prove Theorem \ref{th strong}  in Section \ref{hsmoothWs},  Theorem \ref{global strong}    in Section \ref{proof global strong},  Theorem \ref{th weak fol}  in Section \ref{proof weak fol},  Theorem \ref{strong+weak}  in Section \ref{proof strong+weak}, and Theorem \ref{symp}  in Section \ref{proof symp}.


\section{Examples and weak irreducibility} \label{ex+irred}

\subsection{Examples}\label{examples} $\;$
\vskip.1cm 
\noindent {\bf (i)} We illustrate Theorem \ref{th strong} in a basic setting of a hyperbolic  automorphism of $\T^3$, which is always irreducible. 
Let $L$ be a hyperbolic automorphism of $\T^3$ with eigenvalues  
$$0<\la_{ss}<\la_{ws}<1<\la_u,
$$ and let $e_{ss}$, $e_{ws}$, and $e_u$ be corresponding unit eigenvectors.  We consider a $C^1$ small perturbation $f$ of $L$ of the form 
$$ f(x)=L(x)+\varphi_u(x)e_u+\varphi_{ss}(x)e_{ss},
$$
where $\varphi_u$ and $\varphi_{ss}$ are smooth real-valued functions on $\T^3$.

Then a conjugacy between $f$ and $L$ can be found in the form 
$$ h(x)=x+\phi_u(x)e_u+\phi_{ss}(x)e_{ss}.
$$
One can take smooth functions $\varphi_u$ and $\varphi_{ss}$ for which the corresponding functions 
$\phi_u$ and $\phi_{ss}$ are not smooth, for example trigonometric polynomials as in \cite[Theorem 6.3]{L1} and \cite[Theorem 5.5.5 and Remark 5.5.6]{KtN}.

For such a perturbation, the linear foliation $W^{u+ss}$ is preserved by $f$ and $h$, and hence $\w^{u+ss}=W^{u+ss}=h(\w^{u+ss})$.\, While $\w^{ss}\ne W^{ss}$, we have $h(\w^{ss}) = W^{ss}$ since $\w^{ss}=\w^{u+ss}\cap \w^s$ and $h(\w^s)=\w^s$. We see that  while $h$ is not smooth,  $h(x)^{ws}=x^{ws}$ and hence is smooth on $\T^d$. One can also see by differentiating term-wise that 
$\phi_{ss}(x)= -\sum _{k=1}^\infty \la_{ss}^{k-1}\varphi (f^{-k}x)$ has one derivative in the direction of $e_{ws}$.
This shows that $h$ is $C^1$ along $\w^{ws}$, which corresponds to (4) in Theorem \ref{th strong}.
However, the series for higher derivatives may diverge, and the bundle $\E^{ws}$ is only
H\"older in general.
\vskip.15cm

\noindent {\bf (ii)} One can modify this example to show that the implications  (4)$\implies$(5) and (4)$\implies$(1) in Theorem \ref{th strong}
can fail without density of the Lyapunov leaves. We take an automorphism 
$$L=L_1 \times L_2\quad\text{of }\;\T^5=\T^3\times \T^2,
$$ 
where $L_1$ is as in (i), and $L_2$ is an automorphism of $\T^2$ with  eigenvalues $0< \mu_s <1 <\mu_u=\mu_s ^{-1}$
and unit eigenvectors $v_s$ and $v_u$. If $\mu_s < \la_{ws}$, we can take $W^{ss}$ for $L$ as the span of $e_{ss}$ and $v_s$ and $W^{ws}$ as the span of $e_{ws}$. We consider the perturbation 
$$f(x_1,x_2)=(L_1(x_1)+\varphi(x_2)e_{ws},\,L_2(x_2)).
$$
Then the conjugacy can be written as $h(x_1,x_2)=(x_1+\phi(x_2)e_{ws},\,x_2)$. 
Thus for a fixed $x_2$ the conjugacy is a translation on $\T^3$ in $W^{ws}$ direction, and hence (4) is satisfied.
If $\varphi$ is taken so that $\phi$ is not $C^1$ on $\T^2$ then  component $h^{ws}$ is not $C^1$ on $\T^3\times \T^2$. Thus (5) fails and hence (1) has to fail too since (1)$\implies$(5). It can also be directly
computed as in \cite[Remark 5.5.6]{KtN}  that $h(\w^{ss})\ne W^{ss}$.
\vskip.2cm 

\noindent {\bf (iii)} In the setting of (i), we can similarly obtain perturbations satisfying the assumptions of the theorems below.
Specifically, 
$$ 
\begin{aligned}
&f(x)=L(x)+\varphi_{u}(x)e_{u}\quad\text{for Theorem \ref{strong+weak},}\\
&f(x)=L(x)+\varphi_{ws}(x)e_{ws}\quad\text{for Theorem \ref{th weak fol},\, and}\\
&f(x)=L(x)+\varphi_u(x)e_u+\varphi_{ws}(x)e_{ws}\quad\text{for Theorem \ref{th weak hol}.}
\end{aligned}
$$
These examples illustrate the conclusions we get in these theorems.

\subsection{Irreducibility and weak irreducibility}\label{irreducibility}$\;$
\vskip.1cm 

We recall that $L\in SL(d,\Z)$ is {\em irreducible} if it has no nontrivial rational invariant subspaces or, equivalently, if its characteristic polynomial is irreducible over $\Q$. The eigenvalues of an irreducible $L$ are simple. Irreducibility of $L$ implies that any $L$-invariant linear foliation of $\T^d$ is dense in $\T^d$. 

A weaker assumption on $L$, called {\em weak irreducibility}, gives denseness of every Lyapunov foliation for $L$. It was introduced and discussed in  \cite{KSW23}, see Section 3.3 there. Let $\rho_1, \dots,\rho_m$ be distinct moduli of eigenvalues of $L$ and let $E^{\rho_1}, \dots, E^{\rho_m}$ be the corresponding Lyapunov subspaces. We say that $L$ is {\em weakly irreducible} if for each $i$ the space $\hat E^{\rho_i}=\oplus_{j\ne i} E^{\rho_j}$ contains no non-zero elements of $\Z^d$. Equivalently, there is a set $S\subset \R$ so that for each irreducible over $\Q$ factor of the characteristic polynomial of $L$ the set of moduli of its roots equals $S$. A weakly irreducible $L$ is not necessarily diagonalizable.

 \begin{lemma} [Weak irreducibility] \label{weak irred}
A matrix $L\in SL(d,\Z)$ is weakly irreducible if and only if for each Lyapunov foliation of $L$ the 
leaves are dense in $\T^d$.
\end{lemma}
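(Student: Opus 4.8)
The plan is to prove Lemma \ref{weak irred} by relating density of a single Lyapunov foliation $W^{\rho_i}$ on $\T^d$ to an algebraic condition on the Lyapunov subspace $E^{\rho_i}$, and then matching this condition against the definition of weak irreducibility. The standard fact I would invoke is that an $L$-invariant linear subspace $V\subseteq \R^d$ gives a foliation of $\T^d$ whose leaves are dense if and only if $V$ contains no nonzero rational point, equivalently $V\cap\Q^d=\{0\}$, equivalently the smallest rational (i.e.\ defined over $\Q$) $L$-invariant subspace containing $V$ is all of $\R^d$. One direction of this is elementary (if $0\neq v\in V\cap\Z^d$ then the leaf through $0$ is contained in the proper subtorus $\overline{\mathrm{span}_\R\{L^k v\}}/\Z^d$, which is a proper closed subgroup, so leaves are not dense), and the other direction uses that the closure of a leaf of a linear foliation is a subtorus whose tangent space is the smallest rational $L$-invariant subspace containing $E^{\rho_i}$.

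First I would fix notation: let $\rho_1,\dots,\rho_m$ be the distinct moduli of eigenvalues of $L$ with Lyapunov spaces $E^{\rho_1},\dots,E^{\rho_m}$, so $\R^d=\bigoplus_j E^{\rho_j}$, and set $\hat E^{\rho_i}=\bigoplus_{j\neq i}E^{\rho_j}$. Each $E^{\rho_j}$ is the kernel of a factor of the characteristic polynomial and is defined over the relevant number field but generally not over $\Q$; however $\hat E^{\rho_i}$ is cut out by the condition of lying in the span of eigenvalues of modulus $\neq\rho_i$, and I would note that $E^{\rho_i}$ and $\hat E^{\rho_i}$ are both Galois-invariant as sets under $\mathrm{Gal}(\overline\Q/\Q)$ acting by permuting eigenvalues within a fixed irreducible factor (the Galois action preserves the irreducible-factor decomposition, hence preserves the partition of eigenvalues by which factor they belong to, but \emph{a priori} may mix different moduli). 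The key algebraic observation to establish is: $E^{\rho_i}\cap\Q^d=\{0\}$ if and only if $\hat E^{\rho_i}$ contains a nonzero element of $\Z^d$. This is where the definition of weak irreducibility and the claimed equivalence with the ``common set of moduli $S$'' condition come in.

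The central step, and the one I expect to be the main obstacle, is proving this algebraic equivalence precisely. The direction ``$E^{\rho_i}$ contains a nonzero rational vector $\Rightarrow$ leaves of $W^{\rho_i}$ are not dense'' is the easy elementary direction above. For the converse I would argue: if the leaf through $0$ of $W^{\rho_i}$ is not dense, its closure is a proper subtorus $\T'=V'/(\Z^d\cap V')$ with $V'$ a rational $L$-invariant subspace containing $E^{\rho_i}$; then $V'=\bigoplus_{j\in J}E^{\rho_j}$ for some index set $J\ni i$ provided $V'$ is Lyapunov-saturated --- and this requires the observation that a rational $L$-invariant subspace is automatically a direct sum of \emph{generalized} eigenspaces and in fact Galois-invariance forces it to be a sum of whole Lyapunov spaces only when $L$ is, e.g., such that Galois orbits respect moduli; in general I would instead directly use that $V'$, being rational and $L$-invariant, is a sum of the $L$-isotypic rational pieces, and since it contains $E^{\rho_i}$ it contains the whole irreducible-factor block containing it, whence $V'\supseteq$ that block but the complement $\hat E^{\rho_i}$, if it contained no nonzero integer vector, would force $V'=\R^d$, a contradiction. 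Conversely if $\hat E^{\rho_i}$ contains $0\neq w\in\Z^d$, then $\mathrm{span}_\R\{L^k w\}\subseteq\hat E^{\rho_i}$ is a proper rational $L$-invariant subspace, its $L$-invariant rational complement $U$ satisfies $E^{\rho_i}\subseteq U\subsetneq\R^d$ (since $E^{\rho_i}\cap\hat E^{\rho_i}=0$ and $U$ can be taken as a rational invariant complement to a rational invariant subspace), and then $E^{\rho_i}\subseteq U$ forces the leaf closures into the subtorus $U/(\Z^d\cap U)$, so $W^{\rho_i}$ is not dense. Assembling: density of $W^{\rho_i}$ for \emph{every} $i$ is equivalent to $\hat E^{\rho_i}$ containing a nonzero integer vector for every $i$, which is exactly weak irreducibility; and I would close by noting the ``common moduli set $S$'' reformulation follows because $\hat E^{\rho_i}\cap\Z^d\neq\{0\}$ for all $i$ precisely when no irreducible rational factor of the characteristic polynomial has all its roots of a single modulus $\rho_i$ that fails to appear among roots of some other factor --- i.e.\ each irreducible factor has the same multiset of root-moduli $S$. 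I would keep the write-up modular, isolating the statement ``$W^V$ dense $\iff V\cap\Q^d=\{0\}$'' as the one geometric input and doing the rest by rational linear algebra plus the definitions recalled just above the lemma.
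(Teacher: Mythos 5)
Your overall strategy (reduce density of $W^{\rho_i}$ to a rational--linear--algebra condition on $E^{\rho_i}$) is in the same spirit as the paper, but the proposal as written has genuine errors at both the geometric and the algebraic step. First, the ``standard fact'' you invoke is misstated: for an $L$-invariant subspace $V$, density of the foliation is equivalent to the \emph{smallest rational subspace containing $V$} being all of $\R^d$ (the leaf closure is the subtorus tangent to that subspace), and this is \emph{not} equivalent to $V\cap\Q^d=\{0\}$. For example, for $L=L_1\times L_2$ on $\T^4$ with $L_1,L_2$ hyperbolic on $\T^2$ and distinct eigenvalue moduli, the expanding line of $L_1$ meets $\Q^4$ only at $0$, yet its foliation is dense only in a $2$-subtorus. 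In fact, for hyperbolic $L$ every Lyapunov space automatically satisfies $E^{\rho_i}\cap\Q^d=\{0\}$, so your ``key algebraic observation'' ($E^{\rho_i}\cap\Q^d=\{0\}$ iff $\hat E^{\rho_i}\cap\Z^d\neq\{0\}$) is false and cannot serve as the bridge. Moreover, you have the definition inverted: weak irreducibility means $\hat E^{\rho_i}$ contains \emph{no} nonzero integer vector (the paper's ``contains non non-zero'' is a typo for ``no non-zero''), so your final assembly, which equates density with $\hat E^{\rho_i}$ \emph{containing} a nonzero integer vector, identifies density with the failure of weak irreducibility and even contradicts your own converse argument, where an integer vector in $\hat E^{\rho_i}$ is used to force non-density.

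Second, the algebraic core is gappy where it matters. You pass to a rational $L$-invariant complement $U$ of the rational invariant subspace spanned by the orbit of $w$; such a complement need not exist, since $L$ need not be semisimple over $\Q$ (the paper explicitly notes that a weakly irreducible $L$ may be non-diagonalizable), and the surrounding Galois-invariance remarks do not repair this. The paper avoids both issues by proving the equivalence with the \emph{second} formulation of weak irreducibility (each irreducible factor of $p_L$ has the same set of root moduli) and by using objects that always exist: the primary decomposition $V_k=\ker p_k^{d_k}(L)$ for the direction ``failure of the moduli condition $\Rightarrow$ some $E^{\rho_i}\subseteq\oplus_{k\neq k_0}V_k$ is trapped in a proper rational invariant subspace,'' and, for the other direction, the minimal rational invariant subspace $E\supseteq E^{\rho_i}$ together with a count of roots of modulus $\rho_i$ in the factorization $p_L=p_B\cdot p_C$ with $B=L|_E$ and $C$ the quotient map. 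If you insist on arguing directly with the first formulation, you still need this machinery: from $0\neq w\in\hat E^{\rho_i}\cap\Z^d$, the rational span of the $L$-orbit of $w$ has rational characteristic polynomial all of whose roots have modulus $\neq\rho_i$, hence some irreducible factor $p_{k_0}$ has no root of modulus $\rho_i$, and then $E^{\rho_i}\subseteq\oplus_{k\neq k_0}V_k$ gives non-density --- not a complement argument. As it stands, the proposal does not yield a correct proof.
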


 \begin{proof}
 We will prove the equivalence to the second condition above. 
 Let $p_L$ be the characteristic polynomial of $L$ and $p_L=\prod_{k=1}^K p_k^{d_k}$ 
 be its prime decomposition over $\Q$. 
 
 Let $S=\{\rho_1, \dots, \rho_m\}$ be the set of moduli of roots for each $p_k$. Suppose that for some $i$
 the leaves of the Lyapunov foliation of $W^{\rho_i}$ are not dense in $\T^d$. Let $E$ be the minimal  
 rational $L$-invariant subspace containing $E^{\rho_i}$. Then $E \ne \R^d$. We consider the restriction $B=L|_E$ and the induced
 operator $C$ on the quotient $\R^d/E$. Then we have $p_L=p_B \cdot p_C$,
 and all three are rational polynomials. Hence $p_C=\prod_{k=1}^K p_k^{c_k}$, and so by the assumption 
 $p_C$ has at least one root of modulus $\rho_i$. Then $p_L$ has more roots of modulus 
 $\rho_i$, counted with multiplicities, than the corresponding number for $p_B$. 
 This is impossible since both should be dim$E^{\rho_i}$, as $E^{\rho_i}\subset E$.
 
To prove the converse, assume that for each Lyapunov foliation of $L$ the leaves are dense in $\T^d$.
Let $S=\{\rho_1, \dots, \rho_m\}$ be the set of moduli of roots of $p_L$, and suppose that 
for some $\rho_i\in S$ and $k_0\in \{1,\dots , K\}$ no root of $p_{k_0}$ has modulus $\rho_i$.
Let $\R^d = \oplus V_k$ be the  splitting into rational  $L$-invariant subspaces $V_k =\ker p_k^{d_k}(L)$.
As the eigenvalues of $L|_{V_k}$ are the roots of $p_k$, we nave $E^{\rho_i} \cap V_{k_0} =0$.
This implies that $E^{\rho_i} \subset \oplus_{k\ne {k_0}} V_k$. Indeed $E^{\rho_i}= \oplus_k (V_k \cap E^{\rho_i})$ since the splittings  $\oplus_j  E^{\rho_j}$ and  $\oplus _k V_k $ have a common refinement.
Since $\oplus_{k\ne {k_0}} V_k$ is a  rational $L$-invariant subspace smaller than $\R^d$,
we conclude that the leaves of corresponding Lyapunov foliation $W^{\rho_i}$ are not dense in $\T^d$.
\end{proof}


\section{Preliminaries on normal forms} \label{sec NF prelim}

\subsection{Smooth extensions and sub-resonance  polynomials} \label{polynomials}

Let $\E$ be a continuous vector bundle over a compact metric space $\M$, let  $\V$ be
 a neighborhood of the zero section in $\E$, and let $f$ be a homeomorphism of $\M$. 
We consider an extension $\f : \V \to \E$ that projects to $f$ and preserves the zero section.
We assume that the corresponding fiber maps $\f_x: \V_x \to \E_{fx}$ are $C^r$ diffeomorphisms.
We will consider $r>1$,
and for $r \notin \N$ we will understand $C^r$ in the usual sense that the derivative of order 
$N=\lfloor r \rfloor$ is H\"older with exponent $\a=r-\lfloor r \rfloor$.

We assume that the fibers $\E_x$ are equipped with a continuous family of Riemannian norms.
 We denote by $B_{x,\sigma }$ the closed ball of radius $\sigma >0$ centered at 
$0 \in \E_x$. For $N \in \N$ and $0\le \a <1$ we denote by $\Cr (B_{x,\sigma })=\Cr (B_{x,\sigma}, \E_{fx})$ 
the space of functions $R: B_{x,\sigma}\to \E_{fx}$ with continuous derivatives up to order 
$N$ on $B_{x,\sigma}$ and, if $\a>0$, with $\a$-H\"older $N^{th}$ derivative at $0$.

  \vskip.1cm

\begin{definition} \label{Cr ext} 
We say that $\f$ is a {\em $\Cr$ extension of $f$}, where $N \in \N$ and $0\le\a <1$,
if for some $\sigma>0$ the fiber maps $\f_x:  B_{x,\sigma} \to \E_{fx}$ are $\Cr$ diffeomorphisms which depend continuously on $x$ in $C^N$ topology with uniformly bounded norms $\|\f _x\|_{\Cr (B_{x,\sigma })}$.
  \vskip.05cm
We say that $\h =\{\h_x\}_{x\in X}$, where $\h_x:  B_{x,\sigma} \to \E_{fx}$, is a {\em $\Cr$ coordinate change} if it is a $\Cr$ extension of the identity map on $\M$ preserving the zero section.
\end{definition}

For a smooth extension $\f$ we will denote by $F$ its derivative   at the zero section, i.e.,
$F : \E \to \E$ is a continuous linear extension of $f$ whose fiber maps are linear isomorphisms
$F_x =D_0 \f_x  : \E_x \to \E_{fx}$. 

\begin{definition} \label{chi ext} 
Let $\e>0$ and let 
\begin{equation}\label{chi}
\chi = (\chi_1, \dots, \chi_\ell), \;\text{ where }\,\chi_1<\dots<\chi_\ell<0.
\end{equation}
We say that a linear extension $F$ has {\em $(\chi,\e)$-spectrum}
if there is a continuous $F$-invariant splitting 
\begin{equation} \label{splitting}
\E=\E^{1} \oplus \dots \oplus \E^{\ell}
\end{equation} 
and a  continuous family of Riemannian norms $\|.\|_{x}$ on $\E_x$ such 
that for $i=1, \dots, \ell$,
 \begin{equation} \label{estAEi}
  e^{\chi _i -\e} \| t \|_{x} \le  \| \fd _x (t)\|_{fx} \le e^{\chi _i +\e} \| t\|_{x}
  \quad\text{for every }\, t \in \E^i_x
\end{equation}
and the splitting \eqref{splitting} is orthogonal.
\end{definition}
 
We say that a map between vector spaces is {\em polynomial}\, if each component  is given by a polynomial 
in some, and hence every, basis.
We will consider a polynomial map $P: \E_x \to \E_y$ with $P(0_x)=0_y$ and 
split it into components $(P_1(t),\dots,P_{\ell}(t))$, where $P_i: \E_x \to \E_y^i$. 
Each $P_i$ can be written uniquely as a linear combination of  polynomials
of specific homogeneous types.  A map $Q: \E_x \to \E_y^i $ has {\em homogeneous type} 
$s= (s_1, \dots , s_\ell)$, where $s_1,\dots,s_{\ell}\,$ are non-negative integers, 
if for any real numbers  $a_1, \dots , a_\ell$ and vectors
$t_j\in \E_x^j$, $j=1,\dots, \ell,$ we have 
\begin{equation}\label{stype}
Q(a_1 t_1+ \dots + a_\ell t_\ell)= a_1^{s_1} \cdots  a_\ell^{s_\ell} \cdot Q( t_1+ \dots + t_\ell).
\end{equation}

\begin{definition} \label{SRdef}
We say that a homogeneous type $s= (s_1, \dots , s_\ell)$ for $Q: \E_x \to \E_y^i$  is  
\begin{equation}\label{sub-resonance}
\text{{\bf sub-resonance} if} \quad \chi_i \le \sum_{j=1}^\ell s_j \chi_j.
\end{equation}
We say that a polynomial map $P: \E_x \to \E_y$ is {\em sub-resonance}
if each component $P_i$ has only terms of  sub-resonance  homogeneous types.
We denote by $\s_{x,y}$ the set of all sub-resonance  polynomials 
$P: \E_x \to \E_y$ with $P(0)=0$ and invertible derivative at $0$.
\end{definition}

Clearly, for any sub-resonance relation we have $s_j=0$ for $j<i$ and $\sum s_j \le \chi_1/ \chi_\ell$.
Hence sub-resonance polynomials have degree at most $d(\chi)= \lfloor \chi_1/\chi_\ell \rfloor$.

We will denote  $\s_{x,x}$ by $\s_x$, which is a finite-dimensional Lie group
 with respect to the composition if $\e>0$ is sufficiently small \cite{GK}. 
Any map $P\in \s_{x,y}$ induces an isomorphism between $\s_x$ and $\s_y$ by conjugation. 
In particular, this holds for any invertible linear map which respects the splitting \eqref{splitting}.
\vskip.1cm

\subsection{Normal forms for contracting extensions }\label{ssec NFext}

The following theorem was established in \cite{GK,G} for $r \in \N \cup \{\infty\}$, 
 and in \cite{K24} for this setting.

For a given $\chi $ as in \eqref{chi}, there is $\e_0=\e_0 (\chi)>0$  given by  \cite[(3.13)]{K24} which ensures that the spectrum is sufficiently narrow and will suffice for all results and proofs below.

\begin{theorem}\cite[Theorem 4.3]{K24} \, 
{\em (Normal forms for contracting extensions)}\label{NFext} $\;$ \\
Let $f:X\to X$ be a homeomorphism of a compact metric space $\M$, let
$\E$ be a continuous vector bundle over $\M$,  let  $\V$ be a neighborhood of the zero section in $\E$, and let $\f:\V  \to \E$ be a $C^r$ extension of $f$ that preserves the zero  section.  
 Suppose that the derivative $F$ of  $\f$ at the zero section 
  has $(\chi,\e)$-spectrum with   $\chi_1/\chi_\ell <r$ and $\e<\e_0$. 
\vskip.05cm

Then there exists a $C^r$ coordinate change 
$\h=\{ \h_x\}_{x\in X}$ with diffeomorphisms   
 $\h_x : B_{x,\sigma} \to \E_x$  satisfying $\h_x(0)=0$ and $D_0 \h_x =\Id \,$
 which conjugates $\f$ to a continuous polynomial extension $\p: \E  \to \E$ of $f$ of sub-resonance type,
 i.e.,
 \begin{equation}\label{nfs}
\h_{fx} \circ \f_x =\p_x \circ \h_x, \; \text{ where }  \;  \p_x\in \s_{x,fx}
 \; \text{ for all } x\in X. 
\end{equation}
\vskip.1cm
If $\f$  is a $\Ci$ extension then the coordinate 
change $\h$ is also $\Ci $.
\end{theorem}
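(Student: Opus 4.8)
\medskip
\noindent\textbf{Proof proposal.}\ The plan is the classical two-stage construction of non-stationary normal forms: first produce, by an explicit polynomial coordinate change, a conjugate of $\f$ whose degree-$d(\chi)$ Taylor jet along the zero section is a sub-resonance polynomial, and then remove the remaining flat part by a fixed-point argument. Throughout I would use the $F$-invariant splitting \eqref{splitting} (writing $F^i$ for the restriction of $F$ to $\E^i$) and the adapted orthogonal norms of \eqref{estAEi}, and set $d=d(\chi)=\lfloor\chi_1/\chi_\ell\rfloor$ and $N=\lfloor r\rfloor$; note $d\le N$, since $r>\chi_1/\chi_\ell\ge d$.

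\emph{Stage 1: killing non-sub-resonance jet terms.} Taylor expand $\f_x=F_x+\sum_{k=2}^{d}\f_x^{(k)}+\ti\rho_x$, where $\f_x^{(k)}$ is the homogeneous degree-$k$ term and $\ti\rho_x$ is of class $C^r$ with $\ti\rho_x$ and its derivatives up to order $d$ vanishing at $0$, all data uniform in $x$. I look for $\h_x=\Id+P_x$ with $P_x$ polynomial of degree $\le d$, $P_x(0)=0$, $D_0P_x=0$, such that the degree-$d$ jet of $\h_{fx}\circ\f_x\circ\h_x^{-1}$ is sub-resonance, proceeding by induction on $k=2,\dots,d$. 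Once the jet is sub-resonance below degree $k$, conjugation by $\Id+P_x$ with $P_x$ homogeneous of degree $k$ changes the degree-$k$ term by the coboundary $F_x\circ P_x-P_{fx}\circ F_x$ and fixes the lower ones, so killing its non-sub-resonance part amounts to a twisted cohomological equation for $P$. Decomposing $P$ and $\f^{(k)}$ into homogeneous types $s$ with $|s|=k$ and target components $\E^i$, the piece $P^{(s,i)}$ solves $P^{(s,i)}=\mathcal T P^{(s,i)}+g^{(s,i)}$, where $g^{(s,i)}$ is built from $\f^{(k)}$ and the operator $\mathcal T$ acts on continuous sections $x\mapsto Q_x\in\mathrm{Hom}_s(\E_x,\E^i_x)$ by $(\mathcal T Q)_x=(F^i_x)^{-1}\circ Q_{fx}\circ F_x$. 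By \eqref{estAEi} one has $\|\mathcal T\|\le e^{\sum_j s_j\chi_j-\chi_i+(|s|+1)\e}$; for a \emph{non}-sub-resonance type $\sum_j s_j\chi_j-\chi_i<0$, so $\mathcal T$ is a contraction once $\e<\e_0(\chi)$, and $P^{(s,i)}=\sum_{n\ge0}\mathcal T^n g^{(s,i)}$ is the unique continuous solution. Only finitely many such equations arise (degrees $\le d$), and this is precisely what the choice of $\e_0(\chi)$ accommodates. The surviving sub-resonance terms assemble into a continuous polynomial extension $\p$ of $f$ of degree $\le d$ with $\p_x\in\s_{x,fx}$, its derivative at $0$ still being $F_x$.

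\emph{Stage 2: removing the flat remainder.} After Stage 1 the conjugated map is $\ti\f_x=\p_x+\ti\rho_x$, where now $\ti\rho_x$ and its derivatives up to order $d$ vanish at $0$. Since $\ti\f$ and $\p$ contract fiberwise with spectrum close to $\chi$, I set
\[
  \hat\h_x=\lim_{n\to\infty}(\p^{(n)}_x)^{-1}\circ\ti\f^{(n)}_x,
  \qquad
  \ti\f^{(n)}_x=\ti\f_{f^{n-1}x}\circ\cdots\circ\ti\f_x,
  \qquad
  \p^{(n)}_x=\p_{f^{n-1}x}\circ\cdots\circ\p_x .
\]
If this converges in the $C^r$ topology uniformly in $x$, it is a $C^r$ coordinate change with $\hat\h_x(0)=0$ and $D_0\hat\h_x=\Id$ conjugating $\ti\f$ to $\p$, and then $\h_x:=\hat\h_x\circ(\Id+P_x)$ (on a possibly smaller ball) gives \eqref{nfs}. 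Convergence is a majorant estimate built from the contraction $\|\ti\f^{(n)}_x(t)\|\lesssim e^{n(\chi_\ell+\e)}\|t\|$, standard growth bounds for the sub-resonance polynomial cocycle $(\p^{(n)}_x)^{-1}$ and its derivatives (giving $\|(\p^{(n)}_x)^{-1}\|\lesssim e^{-n(\chi_1-\e)}$), and the vanishing of $\ti\rho_x$ with its first $d$ derivatives: expanding $\hat\h^{(n+1)}_x-\hat\h^{(n)}_x$ by the chain rule, its $C^k$ norm ($k\le N$) decays geometrically at a rate summable exactly when $(d+1)\chi_\ell<\chi_1$ — which is automatic, since $d+1>\chi_1/\chi_\ell$ — and $r\chi_\ell<\chi_1$, i.e.\ $r>\chi_1/\chi_\ell$; the latter is the binding condition, entering through the $N$-th derivative via the H\"older modulus of $D^d\ti\rho_x$ at $0$. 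Handling that modulus in the same way yields $C^r$ convergence. In the $C^\infty$ case the same $\hat\h$ works: the estimate holds with $r$ replaced by every integer $k$, the rate staying summable because $(d+1)\chi_\ell<\chi_1$ and $k\chi_\ell<\chi_1$ for $k>d$; hence $\hat\h$, and with it $\h$, is $C^k$ for all $k$, i.e.\ $C^\infty$.

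\emph{Main obstacle.} The substantive difficulty is the bookkeeping in Stage 2: one must control, uniformly in $x$, how the nonconformality quantified by the ratio $\chi_1/\chi_\ell$ and the spectral error $\e$ accumulate through $n$ compositions of $\ti\f$ and of the sub-resonance polynomials $\p^{-1}$, and through differentiation up to order $N$ together with the H\"older modulus of the top derivative, and then verify that all the resulting exponents stay negative — this is exactly what forces the hypotheses $\chi_1/\chi_\ell<r$ and $\e<\e_0(\chi)$. Stage 1 is conceptually the crux, since resonances enter there and decide which terms of $\p$ survive, but it is technically routine, each step being a convergent geometric series in a genuine contraction; the only care needed there is the continuous dependence on $x$ and, for noninteger $r$, the H\"older control in the fibers.
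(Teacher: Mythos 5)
This theorem is not proved in the paper at all: it is imported verbatim from \cite{K24} (with the integer and $C^\infty$ cases going back to \cite{GK,G}), so there is no in-paper argument to compare against. Your two-stage plan is precisely the classical scheme that those references implement: first a polynomial coordinate change killing the non-sub-resonance jet terms degree by degree, each step being a twisted cohomological equation solved by a geometric series, with the contraction coming exactly from $\sum_j s_j\chi_j-\chi_i<0$ for non-sub-resonance types and $\e<\e_0(\chi)$; then the flat remainder removed via $\hat\h_x=\lim_n(\p^{(n)}_x)^{-1}\circ\ti\f^{(n)}_x$, with $r>\chi_1/\chi_\ell$ as the binding condition. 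So in outline your proposal is correct and coincides with the actual proof strategy of the cited work.

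Two caveats on where the sketch is thinner than the real argument. First, your claim that the $C^k$ norm of $\hat\h^{(n+1)}_x-\hat\h^{(n)}_x$ decays geometrically ``exactly when'' $(d+1)\chi_\ell<\chi_1$ and $r\chi_\ell<\chi_1$ is too quick for intermediate orders $1\le k\le d$: a naive term-by-term Fa\`a di Bruno estimate does not obviously give a summable rate there (e.g.\ $k\chi_\ell-\chi_1$ can be nonnegative for small $k$), and one needs the growth estimates for derivatives of the sub-resonance polynomial cocycle $(\p^{(n)})^{-1}$ and of $n$-fold compositions of $\ti\f$, together with the vanishing of $\ti\rho$ to order $d$ on the shrinking balls, organized carefully (or a fixed-point argument in a weighted $C^r$-type norm); this uniform bookkeeping is the actual content of \cite{K24}, as you correctly flag. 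Second, note that the paper's Definition \ref{Cr ext} only requires the $N$-th derivative to be $\a$-H\"older \emph{at $0$}, not on the ball, and the relevant modulus in Stage 2 is that of $D^N\ti\rho_x$ (not $D^d$); when $d<N$ no H\"older information is needed for the $C^0$ estimate, while for $d=N$ the H\"older-at-$0$ modulus is exactly what makes $\ti\rho_x(t)=O(\|t\|^{r})$. Neither point changes the architecture of your proof, which matches the cited one.
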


\noindent Any two such coordinate changes $\h_x$ and $\h_x'$ satisfy $\h_x'=\h_x \circ g_x$ 
for some $g_x \in  \s_x$.


\section{Normal forms for contracting foliations} \label{sec NF fol}

Let $f$ be a $C^r$ diffeomorphism of a compact manifold  $\M$, where $r=N+\alpha$. 
We will consider an $f$-invariant 
continuous foliation $\w$ of $X$ with {\em uniformly $C^r$ leaves}. By this we mean that for some $R>0$
the balls $B^\w(x,R)$ of radius $R$ in the intrinsic Riemannian metric of the leaf can be given by $C^r$ embeddings which depend continuously on $x$ in $C^N$ topology and, if $r \notin \N$,
have $\a$-H\"older derivative of order $N$ with uniformly bounded H\"older constant.
Similarly, for such a foliation we will say that a function $g$ is {\em uniformly $C^r$ along $\w$} 
if its restrictions to $B^\w(x,R)$ depend continuously on $x$ in $C^N$ topology and
have $\a$-H\"older derivative of order $N$ with uniformly bounded H\"older constant.
We also allow $r=\infty$, in which case uniformly $\Ci$ means uniformly $C^N$ for each $N$.

\begin{definition}[Normal forms on a contracting foliation]\label{NFfol} $\;$\\
Let $f$ be a $C^1$ diffeomorphism of a compact manifold $\M$,
and let $\E$ be a continuous $f$-invariant subbundle of $T\M$ tangent to
an $f$-invariant topological foliation $\w$ with uniformly $C^1$ leaves. 
Suppose that the linear extension $F=Df |_{T\w}$ has $(\chi,\e)$-spectrum.

We say that a family $\{ \h_x \} _{x\in \M}$ of $C^1$ diffeomorphisms  
$\,\h_x: \w_x \to T_x\w$, which depend continuously on $x$ in $C^1$ topology, 
is a {\em normal form for $f$ on $\w$} if   for each $x \in \M$ we have $\h_x(0)=0$ and $D_0 \h_x =\Id \,$,
 $$
 \p_x =\h_{fx} \circ f \circ \h_x ^{-1}:T_{x}\w \to T_{f{(x)}}\w \,\text{ is in  }\s_{x,fx}, 
 $$
and for any  $y \in \w_x$ the map $\h_y \circ \h_x^{-1}:T_{x}\w \to T_{y}\w$ is
a composition of a sub-resonance polynomial in $\s_{x,y}$ with a translation.
\end{definition}

Identifying $\w(x)$ with $T_x\w$ by $\h_x$ we can view the transition maps 
$\h_y \circ \h_x^{-1}$ as maps of $T_{x}\w$ and see that they are in 
 the finite-dimensional group $\bar \s_x$ generated by  $\s_x$ and the translations of $T_x\w$.

In Theorems \ref{th weak fol} and  \ref{th weak hol} we will use normal forms for $f$ on $\w=\w^{ss}$.
In this case the leaves of $\w$ are uniformly $C^\infty$, so we can obtain a natural 
uniformly $C^\infty$ extension of $f$ by locally identifying $T_x\w$ with $\w(x)$ and apply 
Theorem \ref{NFext}. The following theorem yields a normal form for $f$ on $\w=\w^{ss}$, 
which is uniformly $C^\infty$. 
The proof that $\h_y \circ \h_x^{-1} \in \bar \s_x$ is given in \cite{KS16,KS17}.

\begin{theorem}\cite[Theorem 4.6]{K24} 
{\em (Normal forms for foliations with $C^r$ leaves)}\label{NFstrong} $\;$\\
Let $f$ be a $C^r$, $r\in (1,\infty ]$, diffeomorphism of a compact manifold $\M$,
and let $\w$ be an $f$-invariant topological foliation of $\M$ with uniformly $C^r$ leaves. 
Suppose that 
the linear extension $F=Df |_{T\w}$ has $(\chi,\e)$-spectrum with  $\chi_1/\chi_\ell <r$ and $\e<\e_0$.
 Then there exists a normal form  for $f$ on $\w$  such that 
 $\,\h_x: \w_x \to T_x\w$ are uniformly $C^r$ diffeomorphisms.
 \end{theorem}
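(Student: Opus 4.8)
The plan is to reduce Theorem \ref{NFstrong} to the contracting‑extension normal form, Theorem \ref{NFext}, by encoding $\w$ as a $C^r$ extension of $f$ on the bundle $\E=T\w$. First I would fix $\sigma>0$ and a continuous family of $C^r$ embeddings $\theta_x\colon B_{x,\sigma}\subset T_x\w\to\w(x)$ with $\theta_x(0)=x$ and $D_0\theta_x=\Id$, varying continuously in the $C^N$ topology with uniformly bounded $C^r$ norms and uniformly bounded inverses; these are obtained from the given $C^r$ embeddings of the $R$‑balls of the leaves by pre‑composing with the inverse of the derivative at the base point and restricting to a small ball (I avoid the leaf exponential map, which would cost a derivative). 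Setting $\f_x:=\theta_{fx}^{-1}\circ f\circ\theta_x$ on a slightly smaller ball — which makes sense since $f$ has uniformly bounded leafwise derivative — I get an extension $\f=\{\f_x\}$ of $f$ on $\E=T\w$: each $f\colon\w(x)\to\w(fx)$ is $C^r$ because $f$ is globally $C^r$ and the leaves are $C^r$‑embedded $f$‑invariant submanifolds, and compactness of $\M$ upgrades this to a $C^r$ extension in the sense of Definition \ref{Cr ext}. Its derivative at the zero section is $D_0\f_x=Df|_{T_x\w}=F_x$, which by hypothesis has $(\chi,\e)$‑spectrum with $\chi_1/\chi_\ell<r$ and $\e<\e_0$.

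Then I would apply Theorem \ref{NFext} to $\f$, obtaining a uniformly $C^r$ coordinate change $\{\h'_x\}$ (uniformly $C^\infty$ if $f$ is $C^\infty$) with $\h'_x(0)=0$, $D_0\h'_x=\Id$, conjugating $\f$ to a continuous sub‑resonance polynomial extension $\{\p_x\}$, $\p_x\in\s_{x,fx}$. Pulling back by the leaf charts, $\h_x:=\h'_x\circ\theta_x^{-1}$ is a uniformly $C^r$ diffeomorphism of a neighborhood of $x$ in $\w(x)$ onto its image in $T_x\w$, with $\h_x(0)=0$, $D_0\h_x=\Id$ and $\h_{fx}\circ f\circ\h_x^{-1}=\h'_{fx}\circ\f_x\circ(\h'_x)^{-1}=\p_x\in\s_{x,fx}$ locally. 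To extend $\h_x$ to the full leaf I would use the contraction $\chi_\ell<0$: for $y$ in the ball $B^\w(x,R)$ the leaf distance between $f^nx$ and $f^ny$ decays exponentially, so for some $n$ uniform over that ball, $f^n$ maps $B^\w(x,R)$ into the domain of $\h_{f^nx}$; I then set $\h_x:=(\p_{f^{n-1}x}\circ\cdots\circ\p_x)^{-1}\circ\h_{f^nx}\circ f^n$ there. The conjugacy relation makes this independent of $n$ and consistent with the local definition, and since the $\p$'s are polynomial automorphisms while $f^n$ and $\h_{f^nx}$ are uniformly $C^r$ with $n$ uniform, the extended $\h_x$ is uniformly $C^r$ on $B^\w(x,R)$; exhausting leaves gives $\h_x\colon\w_x\to T_x\w$. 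That the transition maps $\h_y\circ\h_x^{-1}$ for $y\in\w(x)$ are compositions of a sub‑resonance polynomial in $\s_{x,y}$ with a translation, hence lie in $\bar\s_x$, is exactly the computation of \cite{KS16,KS17}, which I would cite; and uniqueness up to $g_x\in\s_x$ descends from Theorem \ref{NFext}.

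The main difficulty lies in this first step: making rigorous that the leaf‑chart construction yields an honest $C^r$ extension with the uniform continuous dependence demanded by Definition \ref{Cr ext}, given that $\w$ is only a topological foliation and carries $C^r$ structure solely along its leaves, so one must argue intrinsically from the uniform $C^r$ leaf hypothesis together with compactness rather than using foliation charts. A secondary point is checking that the globalization step preserves uniform $C^r$ bounds on $R$‑balls; everything else is bookkeeping built on Theorem \ref{NFext} and the cited transition‑map identity.
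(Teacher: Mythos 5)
Your proposal is correct and follows essentially the route the paper itself indicates for this cited result (Theorem 4.6 of [K24]): since the leaves are uniformly $C^r$, one locally identifies $T_x\w$ with $\w(x)$ to produce a uniformly $C^r$ extension, applies Theorem \ref{NFext}, globalizes along the leaves using the contraction and the polynomial conjugacy, and invokes \cite{KS16,KS17} for the sub-resonance-plus-translation form of the transition maps $\h_y\circ\h_x^{-1}$. The paper does not reprove this theorem but cites it, and your reduction matches both its stated sketch and the analogous construction it carries out in detail for Theorem \ref{NF weak}.
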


\subsection{Normal forms on  $C^1$ leaves}
In Theorem \ref{th strong} we will use normal forms for $f$ on $\w=\w^{ws}$.
 In general, the leaves of $\w=\w^{ws}$ are only  $C^{1+\text{H\"older}}$ and
so the above result may not apply. In this case we construct the normal form 
using  the next theorem and smoothness of $\w^{ss}$ inside $\w^s$. 
The latter yields that  holonomies of $\w^{ss}$ are $C^\infty$, between 
$C^\infty$ transversals to $\w^{ss}$ inside $\w^{s}$.

\begin{theorem}\label{NF weak} {\em(Normal forms  for foliations with $C^1$ leaves)}\\
Let $f$ be a $C^r$ diffeomorphism of a  compact manifold  $\M$.
Let $U$ be an $f$-invariant topological foliation of $\M$ with uniformly $C^r$ leaves.
Let $\w$ and $\v$ be $f$-invariant topological subfoliations of $U$ with uniformly $C^1$ leaves 
 transverse in the leaves of $U$, i.e., $T_xU=T_x\w \oplus T_x\v$ 
for each $x \in \M$.  

Suppose that the holonomies $\H$ of \,$\v$ inside $U$ are uniformly $C^r$,  
and  $Df |_{TW}$ has $(\chi, \e)$ spectrum with $\chi_1/\chi_\ell <r$ and $\e<\e_0$. 
Then there is a normal form $\{\h_x\}$ for $f$ on $\w$  such that 
for any $x \in \M$ and any $y\in \v(x)$ the lifted holonomy maps  
\begin{equation}\label{hol inv eq}
\bar H_{x,y}=\h_y \circ \H_{x,y} \circ \h_x^{-1} : \,T_x\w \to T_y\w
\end{equation}
are $C^r$ diffeomorphisms, uniformly in $x$ and $y$.

\end{theorem}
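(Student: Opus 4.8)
The plan is to build the normal form on $\w$ out of the normal form on a single $\v$-leaf and then spread it to the whole manifold using the $C^r$ holonomies of $\v$. First, fix one leaf $\v(x)$ and observe that $\w$ restricted to the $\v$-saturation of a small piece, viewed through the uniformly $C^r$ charts of $U$, gives a $C^r$ family: since $T_xU = T_x\w\oplus T_x\v$ and the holonomies $\H_{x,y}$ of $\v$ inside $U$ are uniformly $C^r$, the map $y\mapsto \H_{x,y}$ furnishes a uniformly $C^r$ identification of nearby $\w$-leaves. In particular, even though an individual $\w$-leaf may be only $C^{1+\text{H\"older}}$, the \emph{disintegration transverse to} $\v$ is $C^r$. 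Concretely, I would define an auxiliary $C^r$ extension of $f$ on the bundle $T\w$ over the compact space $\M$ by using the $\w$-leaf itself in the $C^1$ directions but the $C^r$ holonomy structure to control the higher-order transversal regularity; this mimics the construction of a natural smooth extension in the remark preceding Theorem \ref{NFstrong}, with the role of the smooth ambient leaf played here by the $C^r$ holonomy pseudogroup of $\v$.

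Second, once we have a $C^r$ extension of $f$ on $T\w$ whose linear part $Df|_{TW}$ has $(\chi,\e)$-spectrum with $\chi_1/\chi_\ell < r$ and $\e<\e_0$, Theorem \ref{NFext} applies and produces a $C^r$ coordinate change $\h=\{\h_x\}$ conjugating $f$ to a sub-resonance polynomial extension $\p_x\in\s_{x,fx}$, with $\h_x(0)=0$ and $D_0\h_x=\Id$. The statement that $\h_y\circ\h_x^{-1}$ is (translation)$\circ$(sub-resonance polynomial in $\s_{x,y}$) for $y\in\w(x)$ is exactly the content cited from \cite{KS16,KS17} and goes through verbatim, giving us a bona fide normal form for $f$ on $\w$ in the sense of Definition \ref{NFfol}.

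Third, the new point is the holonomy invariance \eqref{hol inv eq}. Here I would use uniqueness of the normal form up to the group $\s_x$ (the final sentence after Theorem \ref{NFext}). Fix $x$ and $y\in\v(x)$. The holonomy $\H_{x,y}:\w(x)\to\w(y)$ conjugates $f$ (more precisely, the return dynamics) on $\w(x)$ to that on $\w(y)$, because $\v$ is $f$-invariant so $f\circ\H_{x,y}=\H_{fx,fy}\circ f$; pushing the normal form $\h_x$ forward by $\H_{x,y}$ therefore yields \emph{another} normal form on $\w(y)$, namely $(D_0\H_{x,y})^{-1}\circ(\h_y\circ\H_{x,y}\circ\h_x^{-1})$ after renormalizing the linear part — and I will need to track that $\H_{x,y}$ intertwines the $(\chi,\e)$-spectral splittings, which it does since those splittings are $Df$-invariant and continuous and $\H_{x,y}$ is a limit of compositions of $f^{\pm1}$. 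By the uniqueness clause, the transition $\bar H_{x,y}=\h_y\circ\H_{x,y}\circ\h_x^{-1}$ differs from a fixed linear isomorphism respecting the splitting by an element of $\s_x$ composed appropriately; in particular it is a sub-resonance polynomial, hence automatically $C^\infty$, and its $C^r$-norm bound comes from the uniform $C^r$ bound on $\H_{x,y}$ together with the uniform $C^r$ bounds on $\h_x,\h_y$ from Theorem \ref{NFext}.

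The main obstacle I anticipate is the first step: rigorously producing a $C^r$ extension of $f$ on $T\w$ given only that $\w$ has $C^1$ leaves, using the transverse $C^r$ holonomy structure of $\v$. One must check that the candidate extension $\f_x$ genuinely has uniformly bounded $C^r$ fiber maps depending continuously on $x$ in $C^N$ topology — the subtlety being that differentiating $N$ times in directions \emph{along} $\w$ is only legitimate to first order, so the higher-order control has to be extracted from the $\v$-holonomies and the ambient $C^r$ structure of $U$, and one must verify these two sources of regularity are compatible. Once this extension is in hand, the rest is an application of the already-established Theorems \ref{NFext} and the uniqueness statement, plus the holonomy-equivariance argument sketched above.
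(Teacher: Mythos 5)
Your overall strategy (use the uniformly $C^r$ holonomies of $\v$ to manufacture a $C^r$ extension of $f$ over $T\w$, then invoke Theorem \ref{NFext}) is the same as the paper's, but the step you yourself flag as ``the main obstacle'' is precisely the content of the paper's proof, and you do not carry it out. The paper's construction is concrete: for each $x$ choose a uniformly $C^r$ embedded transversal $\t_x=i_x(B_\rho(x))\subset U(x)$ tangent to $T_x\w$ at $x$, and define the extension by
$\f_x= i_{fx}^{-1}\circ\tilde\H_{fx}\circ f|_{\t_x}\circ i_x$, where $\tilde\H_{fx}:f(\t_x)\to\t_{fx}$ is the $\v$-holonomy between two uniformly $C^r$ transversals (hence uniformly $C^r$ by hypothesis). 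Crucially, the charts $\H_x\circ i_x:\t_x\to\w(x)$ are then declared to be a $C^r$ atlas on each leaf, giving $\w(x)$ a new $C^r$ structure $\ti\w(x)$ that is only $C^1$-compatible with the intrinsic one; the normal form $\h_x=\hat\h_x\circ(\H_x\circ i_x)^{-1}$ is $C^r$ only with respect to this structure. Without this construction your phrase ``uniform $C^r$ bounds on $\h_x,\h_y$'' has no meaning, since a priori the leaves of $\w$ carry no $C^r$ structure at all, and a holonomy $\H_{x,y}$ between two merely $C^1$ leaves is not a map between $C^r$ manifolds. In the paper the uniform $C^r$ bound on $\bar H_{x,y}$ then falls out of the explicit composition $\bar H_{x,y}=\hat\h_y\circ i_y^{-1}\circ\tilde\H_{x,y}\circ i_x\circ\hat\h_x^{-1}$, each factor being uniformly $C^r$ -- no structural statement about the holonomies is needed.

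Your third step is also off target: you try to prove that $\bar H_{x,y}$ is a sub-resonance polynomial via the uniqueness clause after Theorem \ref{NFext}, but that stronger statement is not what Theorem \ref{NF weak} asserts; it is Theorem \ref{Main Hol}, which carries the additional hypothesis that $U$ is contracted by $f$. That hypothesis is not decorative: the first step of the paper's proof of Theorem \ref{Main Hol} -- that $D\H_{x,y}$ preserves the flag of strong subbundles -- uses that $\dist_\v(f^nx,f^ny)\to0$ so that $\|D\H_{f^nx,f^ny}\|$ stays bounded under forward iteration, and the subsequent induction on Taylor terms uses the same contraction. Your justification that the holonomy ``intertwines the $(\chi,\e)$-spectral splittings since those splittings are $Df$-invariant and continuous'' is not an argument, and under the hypotheses of Theorem \ref{NF weak} alone ($\v$ need not be contracted) there is no reason for it. Moreover, applying the uniqueness clause to the holonomy-pushed-forward family requires renormalizing its derivative at the base point to the identity by a linear map respecting the splitting, which again presupposes the unproven flag-preservation. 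So the proposal identifies the right difficulty but leaves the essential construction missing, and the route it sketches for the holonomy statement proves the wrong (stronger) claim by means that require an assumption the theorem does not make.
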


\begin{proof}  To construct the normal form we need to define a $C^r$ extension  $\f$
that corresponds to the restriction of $f$ to the leaves of $\w$.  Since the leaves of $\w$ are only $C^1$, 
we use the action of $f$ transversally to $\v$ inside $U$ to define $\f$.
More precisely, for any $x\in \M$ we identify in a uniformly $C^r$ way a small ball $B_\rho(x)$ in $T_x\w$ 
with a $C^r$ submanifold of $U(x)$ tangent to $T_x\w$ at $x$. Thus we obtain a uniformly $C^r$ family 
$\t_x$ of transversals  to $\v$ inside $U$: 
\begin{equation}\label{transv}
i_x : T_x\w \supset B_\rho(x)  \to U(x), \quad i_x(B_\rho)=\t_x, \quad D_x i_x=\Id.
\end{equation}
Using these transversals we consider the following holonomy maps of $\v$:  
\begin{equation}\label{hol v}
\text{ $ \H_x: \t_x \to \w(x) $ and $\tilde \H_{fx} : f(\t_x) \to \t_{fx}$.}
\end{equation}
We note that   $\tilde \H_{fx}$ is  uniformly $C^r$ by the assumption on 
holonomies of $\v$  since $f(\t_x)$ and  $\t_{fx}$ are uniformly $C^r$ transversals, but $ \H_{x}$ is 
only uniformly $C^1$ as  $\w(x)$ is assumed only $C^1$. 
Using these maps and the identifications $i_x$ we define the  $C^r$ extension of $f$  
$$
\f_x\,=\, i_{fx}^{-1} \circ \tilde \H_{fx} \circ f|_{\t_x}  \circ i_x 
\,=\, (\H_{fx} \circ i_{fx})^{-1} \circ f|_{\w(x)} \circ (\H_x \circ i_x)  :\, T_x\w \to T_{fx}\w.
$$
In fact, the maps $\H_x \circ i_x$ give an atlas of local coordinates on the leaves of $\w$ 
with $C^r$ transition maps. This gives $\w(x)$ a structure of $C^r$ manifold, which is $C^1$
consistent with the submanifold structure. We will denote by $\ti \w(x)$ the leaf equipped with this $C^r$ structure.
With respect to it, the restriction of $f$ to each leaf, $f_x:\ti \w(x) \to \ti \w(fx)$, 
becomes uniformly $C^r$, as in these local coordinates it coincides with the  extension $\f_x$.

Now this setting becomes essentially the same as in the case of $C^r$ leaves. 
Theorem \ref{NFext} gives normal form coordinates $\hat \h_x :  T_x \w \to T_{x}\w$ 
for the extension $\f$ and hence 
\begin{equation}\label{weak hol eq}
\h_x= \hat \h_x  \circ  (\H_{x} \circ i_{x})^{-1} : \, \w(x) \to T_{x}\w
\end{equation}
is a normal form for $f$ on $\w$, moreover $\h_x \in C^r(\ti \w(x), T_{x}\w)$. The proof that $\h_y \circ \h_x^{-1} \in \bar \s_x$ is the same as in  \cite{KS16,KS17}.

For any $y \in \v(x)$, the holonomies of $\v$ between the transversals  $\tilde \H_{x,y}: \t_x \to \t_y$
 are uniformly $C^r$ and are related to the holonomies between the leaves $ \H_{x,y}: \w(x) \to \w(y)$  
 by $\tilde \H_{x,y}= \H_y^{-1} \circ  \H_{x,y} \circ \H_x$. By the construction of  $\h$
 this yields that  the lifted holonomies $\bar \hd_{x,y}$ are also uniformly $C^r$, since
\begin{equation}
\bar \hd_{x,y} =\h_y \circ \H_{x,y} \circ \h_x^{-1}
= \,\hat \h_y  \circ i_{y}^{-1} \circ \tilde\H_{x,y} \circ i_{x} \circ \h_x^{-1} : \,T_x\w  \to T_y\w.
\end{equation}

Even though the holonomies $\H_{x,y}: \w(x) \to  \w(y)$ are only $C^1$, the holonomies $\H_{x,y}: \ti \w(x) \to \ti \w(y)$ are $C^r$. 
 \end{proof}

In the setting of the previous theorem, we formulate our main technical result
on relationship between holonomies and normal forms. We will apply this theorem  
with $U=\w^s$ to both $(\v,\w)=(\w^{ss},\w^{ws})$ and $(\v,\w)=(\w^{ws},\w^{ss})$ .

\begin{theorem} [Holonomy invariance of normal forms]  \label{Main Hol} 

In addition to the assumptions of Theorem \ref{NF weak}, suppose that the foliation $U$ is  contracted by $f$,  i.e., $\| Df |_{TU}\|<1$ for some metric. Then for any $x\in \M$ and $y \in \v(x)$  the lifted holonomy  map  \eqref{hol inv eq} is a sub-resonance polynomial, i.e., $\bar H_{x,y} \in \s_{x,y}$.
   
\end{theorem}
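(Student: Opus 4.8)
The plan is to reduce the statement to a fixed-point argument inside the finite-dimensional Lie group $\bar\s_x$. Fix $x \in \M$ and $y \in \v(x)$, and set $G_{n} := \bar H_{f^n x, f^n y}$ for $n \in \N$, which by Theorem \ref{NF weak} is a $C^r$ diffeomorphism $T_{f^n x}\w \to T_{f^n y}\w$, uniformly in $n$. The normal form intertwines $f$ on $\w$ with sub-resonance polynomials $\p_x \in \s_{x,fx}$; since $\v$ is $f$-invariant, the holonomies satisfy the equivariance relation $f \circ \H_{x,y} = \H_{fx,fy} \circ f$ on the leaves, and passing to normal form coordinates this becomes
$$
\p_{x}^{(y)} \circ G_0 = G_1 \circ \p_x^{(x)},
$$
where $\p_x^{(x)} = \h_{fx}\circ f \circ \h_x^{-1}$ and $\p_x^{(y)} = \h_{fy}\circ f \circ \h_y^{-1}$ are the sub-resonance polynomials at $x$ and $y$. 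Iterating,
$$
G_0 = (\p_{f^{n-1}x}^{(y)} \circ \cdots \circ \p_x^{(y)})^{-1} \circ G_n \circ (\p_{f^{n-1}x}^{(x)} \circ \cdots \circ \p_x^{(x)}).
$$
So $G_0$ is sandwiched between long compositions of sub-resonance contractions and their inverses, with a uniformly $C^r$ map $G_n$ in the middle.

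Next I would exploit that $U$ is contracted by $f$: the compositions $\Psi_n^{(x)} := \p_{f^{n-1}x}^{(x)} \circ \cdots \circ \p_x^{(x)}$ map a fixed ball $B_\rho \subset T_x\w$ into balls of radius shrinking geometrically to $0$, and similarly for $\Psi_n^{(y)}$. Write $G_n = L_n + (\text{higher order})$ where $L_n = D_0 G_n$ is the linear part; because $G_n$ is $C^r$ with $r>1$ uniformly, on the tiny ball $\Psi_n^{(x)}(B_\rho)$ the map $G_n$ agrees with $L_n$ up to an error of order $o(\|\Psi_n^{(x)}(B_\rho)\|)$, which is superlinearly small compared to the contraction. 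The key linear algebra point, exactly as in the proof of Theorem \ref{NFext}, is that conjugating a linear map $L_n$ respecting the splitting by the contractions $\Psi_n$ and then taking the limit kills all homogeneous components that are \emph{not} sub-resonance: a term of homogeneous type $s$ with $\chi_i > \sum s_j \chi_j$ is contracted away. Thus $G_0 = \lim_n (\Psi_n^{(y)})^{-1}\circ G_n \circ \Psi_n^{(x)}$, and the limit is forced to be a polynomial map all of whose homogeneous types satisfy the sub-resonance inequality; that is precisely $G_0 \in \s_{x,y}$. (One first needs to know $G_0$ has invertible derivative at $0$, which is automatic since it is a diffeomorphism; and that the limit exists, which follows from the uniform $C^r$ bounds and the geometric contraction giving a Cauchy estimate.)

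The main obstacle, and the step that needs the most care, is making the ``error terms wash out'' argument rigorous when $G_n$ is only $C^r$ for finite $r$ rather than polynomial: one must show that the non-sub-resonance Taylor coefficients of $G_n$ at $0$, after conjugation by the contractions, tend to $0$, and that the remainder beyond the Taylor polynomial of degree $d(\chi) = \lfloor \chi_1/\chi_\ell\rfloor$ also tends to $0$ in $C^0$ on the fixed ball — this is where $\chi_1/\chi_\ell < r$ is used, exactly as in Theorem \ref{NFext}. Concretely I would expand $G_n$ in its degree-$d(\chi)$ Taylor polynomial at $0$ plus a H\"older remainder, conjugate termwise, and check that each monomial of non-sub-resonance type and the remainder are dominated by a decaying geometric factor while each sub-resonance monomial's coefficient converges. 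The uniformity in $x$ and $y$ is inherited from the uniformity of the $C^r$ bounds on the lifted holonomies in Theorem \ref{NF weak} and the uniform $(\chi,\e)$-spectrum of $Df|_{T\w}$. Finally, I would note that since $\h_y\circ\h_x^{-1}$ is already a composition of an element of $\s_{x,y}$ with a translation (part of the normal-form conclusion), and $\H_{x,y}$ fixes... — more directly, the above shows $\bar H_{x,y}$ itself lies in $\s_{x,y}$ with $\bar H_{x,y}(0)=0$, completing the proof.
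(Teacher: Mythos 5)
Your overall strategy is the one the paper uses: exploit the exact intertwining $\bar H_{f^nx,f^ny}\circ\p^n_x=\p^n_y\circ\bar H_{x,y}$ along the orbit (your sandwich identity), the contraction of $U$, and the threshold $r>\chi_1/\chi_\ell$ to kill the non-sub-resonance Taylor terms and the remainder. But two steps in the middle do not work as written. First, the claim that on $\Psi_n^{(x)}(B_\rho)$ it suffices to compare $G_n$ with its linear part $L_n$ up to an error $o\bigl(\|\Psi_n^{(x)}(B_\rho)\|\bigr)$ is false whenever $\chi_1/\chi_\ell\ge 2$: the outer map $(\Psi_n^{(y)})^{-1}$ expands at rate up to $e^{n(-\chi_1+\e)}$ while the radius shrinks only like $e^{n(\chi_\ell+\e)}$, so an error that is merely superlinear in the radius blows up after conjugation; one needs an error of order $\rho_n^{\,t}$ with $t>\chi_1/\chi_\ell$, i.e.\ the full Taylor expansion to degree $\lfloor r\rfloor$ (or $d(\chi)$) plus the H\"older remainder, which you only invoke later. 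Second, and more seriously, ``conjugate termwise'' is not justified: the conjugating maps $\Psi_n^{(x)}$ and $(\Psi_n^{(y)})^{-1}$ are \emph{nonlinear} sub-resonance polynomials, the coefficients of $(\Psi_n^{(y)})^{-1}$ grow exponentially in $n$, and composition mixes degrees and homogeneity types; a given non-sub-resonance coefficient of $\bar H_{x,y}$ receives contributions from many products of coefficients of $G_n$, $\Psi_n^{(x)}$, and $(\Psi_n^{(y)})^{-1}$, and showing that their sum decays requires growth estimates for coefficients of iterated sub-resonance polynomials and their inverses which your sketch does not supply. This is not ``exactly as in Theorem \ref{NFext}'', where the relevant conjugation is handled order by order; also, you assert that $L_n$ ``respects the splitting'' without proof, and this flag-invariance is itself part of what must be shown.

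The paper circumvents exactly this difficulty by an induction on the order $k$ of the Taylor coefficient: assuming all coefficients of order $<k$ are sub-resonance for all pairs on a $\v$-leaf (in particular for $(f^nx,f^ny)$), and using that compositions of sub-resonance terms are again sub-resonance, the non-sub-resonance part $N$ of order $k$ satisfies the clean relation $N_{f^nx}\circ \pd^{(1)}_x=P^{(1)}_y\circ N_x$ as in \eqref{NRcom}, which involves only the \emph{linear} parts of the dynamics, and the rate comparison is then immediate; the base case is the separate argument that $D_0\bar H_{x,y}$ preserves the flag, using boundedness of $\|D\H_{f^nx,f^ny}\|$ coming from the contraction of the $\v$-distance. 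If you insert this induction (or an equivalent complete bookkeeping of composition coefficients), prove the flag-invariance of the derivative, and keep your final remainder estimate with exponent exceeding $\chi_1/\chi_\ell$, your argument becomes essentially the paper's proof; as it stands, the decisive step is missing.
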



\subsection{Proof of Theorem \ref{Main Hol}}$\;$\\
We denote $T\w$ by $\E$ and let $\E=\E^1\oplus \dots \oplus \E^\ell$ be the invariant splitting for $Df|_\E$.
\vskip.05cm

First we show that the derivative $D\H_{x,y}=D \bar H_{x,y}: \E_x \to \E_y$ preserves the flag of strong subbundles,
 i.e., for each $k=1, \dots ,\ell,$
$$D\H_{x,y} (\E_x^{1,k}) \subseteq  \E_y^{1,k}, \quad\text{where }\;\E^{1,k}=\E^1 \oplus \dots \oplus \E^k.$$
Since the foliations $\w$ and $\v$ are $f$-invariant, we have the commutation relation 
\begin{equation}\label{hol comm}
 f^{n} \circ \H_{x,y}= \H_{f^nx,f^ny} \circ f^n :\, \w_x \to \w_{f^ny}.
\end{equation}
Differentiating this relation  we obtain 
$$Df^n|_{\E_y}\circ D\H_{x,y}(x)= D \H_{f^nx,f^ny}(f^nx) \circ Df^n|_{\E_x} :\, \E_x \to \E_y.$$
We consider a non-zero vector $u\in \E^k_x$ and let $j$ be largest index so that the $j$ component of 
$v=D\H_{x,y}(x) u$ is non-zero. We need to show that $j \le k$. Suppose that $j>k$ and hence
$\xs_k+\e < \xf_j-\e$ since $\e<\e_0$. Then we obtain a contradiction as follows. First we have
$$\|Df^n|_{\E_y}\circ D\H_{x,y}(x)\, u \|=\| Df^nv \| \ge e^{(\xf_j-\e)n} \|v_j\|.
$$
On the other hand, 
$$\|D \H_{f^nx,f^ny}(f^nx) \circ Df^n|_{\E_x} u\| \,\le\, \|D \H_{f^nx,f^ny}(f^nx)\| \cdot \|Df^n|_{\E_x} u\| 
\,\le\, C e^{(\xs_k+\e)n} \|u\|,
$$
since $\|D \H_{f^nx,f^ny}(f^nx)\|$ is uniformly bounded in $n$ as $\dist_\v(f^nx,f^ny)$ decreases. This is impossible for large $n$ since $\xs_k+\e < \xf_j-\e$. Therefore $D\H_{x,y}$ preserves the flag.
\vskip.3cm

We take arbitrary $x \in \M$ and $y\in \v(x)$ and consider the lifted holonomy (suppressing the bar in this proof)
$$\hd_{x,y}=\bar H_{x,y} =\h_y \circ \H_{x,y} \circ \h_x^{-1} : \,\E_x \to \E_y.$$
We write its Taylor polynomial of degree $M=\lfloor r \rfloor$ at $0\in \E_x$ as
$$T_M(\hd_{x,y})(t)= \sum_{m=1}^M \hd^{(m)}_{x,y}(t) : \,\E_x \to \E_y,$$
where $\hd^{(m)}_{x,y} : \E_x \to \E_y$ is a homogeneous polynomial of degree $m$.
Now we show inductively that this Taylor polynomial contains only sub-resonance 
terms.
Writing $f^n$ in normal form coordinates we have 
$$     \h_{f^n x} \circ f^n|_{\w(x)} \circ \h_{x}^{-1} \,=\, \p ^n_x : \,\E_x \to \E_{f^nx},$$
where  the polynomial map $\p^n_x(t)= \sum_{m=1}^d \pd^{(m)}_x(t)$ 
 contains only sub-resonance terms and in particular has degree at most $d=\lfloor \xf_1/\xs_\ell \rfloor \le M$.  With similar notations for $y$, 
 the commutation relation \eqref{hol comm} becomes
 \begin{equation}\label{hcom}
\hd_{f^nx,f^ny} \circ  \p ^n_x \,=\, \p ^n_y \circ \hd_{x,y}\,: \;\E_x \to \E_{f^ny}.
\end{equation}  

We already proved that the first derivative $\hd^{(1)}_{x,y}=D_x H_{x,y} $ 
preserves the strong flag, which means exactly that it 
is a sub-resonance linear map. 

Inductively, we assume that $ \hd^{(m)}_{x,y}$ has only sub-resonance 
terms for all $x\in \M$, $y \in \v_x$, and $m=1,\dots ,k-1$ and show that the same holds 
for $ \hd^{(k)}_{x,y}$. We split 
$$\hd^{(k)}_{x,y}=S_{x,y}+N_{x,y}$$
 into the 
sub-resonance part and the rest. It suffices to show that $N_{x,y}= 0$ for all $y \in \w_x$ that are sufficiently close to $x$. Assuming the contrary, we fix such $x$ and $y$
with $N_{x,y}\ne 0$. We will write $N_{x}$ for $N_{x,y}$ and 
$N_{f^nx}$ for $N_{f^nx,f^ny}$. We consider the Taylor  terms of order $k$ in 
the commutation relation \eqref{hcom}. They come from the compositions of the respective
Taylor polynomials
$$ \sum_{j=1}^M \hd^{(j)}_{f^n x,f^n y}\left( \sum_{m=1}^d \pd ^{(m)}_{x}(t) \right) \; \text{ and }\;
\sum_{m=1}^d P^{(m)}_{y}\left( \sum_{j=1}^M \hd^{(j)}_{x,y}(t)  \right).
$$
By the inductive assumption, $N_{x}$  and $N_{f^nx}$ are the only non sub-resonance terms 
of order $k$ in these polynomials, and all lower order terms are sub-resonance.
Since any composition of sub-resonance  terms is again sub-resonance, 
taking non sub-resonance  terms of order $k$ on both sides yields the equation
 \begin{equation}\label{NRcom}
N_{f^n x}\left(  \pd ^{(1)}_{x}(t) \right) =  P^{(1)}_{y}\left( N_{x}(t)  \right).
\end{equation}
We decompose $N_x$ into components $N_x=(N^1_x,\dots, N^\ell_x)$ and let $i$ be the largest index such that  $N^i _x \ne 0$, i.e., there exists $t' \in \E_x$ so that $z= N^i_x (t')\in \E_y$
 has component $0\ne z_i\in \E^i_y$.
 We denote
  \begin{equation}
 w= P^{(1)}_{y}(z) =  D \p _y^n(0) z = Df^n (y) z \in \E_{f^n y} \;\,  \text{ and }\;\, 
 w_i =Df^n (y) z_i \in \E^i_{f^n y} 
 \end{equation}
its $i$th component.  Then 
 \begin{equation}\label{right}
 \| w_i \|  \ge C e^{n (\xf _i -\e)},
 \end{equation}
where the constant $C=\|z_i\|$ does not depend on $n$. 
 \vskip.1cm
 
 Now we estimate from above the $i$th component of $N_{f^n x}(  \pd ^{(1)}_{x}(t'))$. 
 First, 
 $$\| \pd ^{(1)}_{x}(t'_j) \| = \| Df^n(x) \,t'_j \|\le  \| t' \|\, e^{n  (\xs _j +\e)}
 \quad\text{for any }j .$$
Let $N^s_{f^n x}$ be a  term of homogeneity type $s=(s_1, \dots, s_\ell)$ in the component $N_{f^n x}^i$.  
By homogeneity, its norm can be estimated as 
 $$
 \| N^s_{f^n x} \left(  \pd ^{(1)}_{x}(t')\right) \| \le 
\| N_{f^n x}\| \cdot  \| t' \|^k \cdot  e^{n \sum s_j (\xs _j +\e)} .
$$
Since no term in $N_{f^n x}^i$ is a sub-resonance one, we have $\xf _i > \sum s_j \xs _j$ and hence, since  $\e<\e_0$, we also have  $  \xf _i  > \sum s_j \xs_j + (n+2)\e$.
Then the left side of \eqref{NRcom}  at $t'$ can be estimated as
$$
\| N^s_{f^n x} \left(  \pd ^{(1)}_{x}(t')\right) \| \le C'  e^{n (\xf _i -2\e)}, 
$$
where the constant $C'$ does not depend on $n$ since the norms of 
$\hd^{(k)}_{f^n x,f^n y}$, and hence those of $N_{f^n x}$, are uniformly bounded.
This contradicts  \eqref{right} for large $n$.
\vskip.1cm

Thus we have shown that $T_M(\hd_{x,y})$, the Taylor polynomial of degree $M$ 
 for $H_{x,y}$ at $0\in \E_x$,  contains only sub-resonance terms for all $x\in \M$ and $y\in \v(x)$. 
It remains to show that $H_{x,y}$ coincides with $T_M(\hd_{x,y})$.

\vskip.1cm

In addition to \eqref{hcom}, the same commutation relation holds for the Taylor polynomials
 \begin{equation}\label{Hcom}
T_M(H_{f^n x,f^n y}) \circ \p ^n_x =  \p ^n_y  \circ T_M(H_{x,y}).
\end{equation}
Indeed, the  two sides must have the same terms up to order $M$, 
but they are sub-resonance polynomials and thus have no terms of 
order higher than $d\le M$.
Denoting 
$$\Delta_n= H_{f^n x,f^n y}-T(H_{f^n x,f^n y})
$$
we obtain from \eqref{hcom} and \eqref{Hcom} that
 \begin{equation}\label{Hcom2}
 \p ^n_y  \circ H_{x,y} -  \p ^n_y  \circ T_M(H_{x,y}) = \Delta_n \circ \p ^n_x.   
\end{equation}
We denote 
$$\Delta= \hd_{x,y}-T_M(H_{x,y}) :\;\E_x \to \E_y
$$ 
and suppose 
that $\Delta \ne0$ for some $x,y$. 
Let $i$ be the largest index for which the $i$th component of $\Delta$ is 
nonzero. Then there exist arbitrarily small $t' \in \E_x$  such that the $i$th component
$z_i$ of  $z=\Delta (t') $ is nonzero. Since $\p ^n_y$ is a sub-resonance 
polynomial, the {\it nonlinear} terms in its $i$th component can depend only
on $j$ components of the input with $j>i$, which are the same for
$H_{y,x}$ and $T(H_{x,y})$ by the choice of $i$. The linear part of $\p ^n_y$ is
$Df^n(y)$ and it preserves the splitting. Thus the $i$th component of the left side of 
\eqref{Hcom2} at $t'$ is $Df^n(y) z_i$. So we can estimate  the left side of \eqref{Hcom2}  at $t'$
\begin{equation}\label{HcomR}
\| ( \p ^n_y  \circ H_{x,y} -  \p ^n_y  \circ T(H_{x,y}))(t') \| \ge 
\| Df^n(y) z_i\| \ge e^{n  (\xf _i - \e)}  \| z_i \| \ge e^{n  (\xf_1 -\e)}  \| z_i \|_ .
\end{equation}

Now we estimate  the right side of \eqref{Hcom2}.
Since $H_{f^n x,f^n y}$ is $C^{M+\alpha}$,  there exists a constant
$C$ determined by $\| H_{f^n x,f^n y}\|_{C^{M+\alpha}} $ such that 
\begin{equation}\label{Ck,delta}
\| \Delta_n (t)\| \le C \| t\| ^{M+\a} \quad\text{for all }
t \in  \E_{f^n x}  \text{ with } \| t\| \le \delta
\end{equation}
  for a sufficiently small $\delta >0$. This $C$ can be chosen uniform in $n$ and close $x,y$.
  
  Also, for a sufficiently small $\delta >0$ we  can estimate  $ \p ^n_x $   as 
\begin{equation}\label{P est}
\| \p ^n_x  (t) \| \le e^{n  (\xs _\ell +2\e)} \| t\| \quad\text{for all $n \in \N$ and $\| t\|<\delta $.} 
\end{equation}
This follows  from the fact that for all $x \in \M$ we have 
$$ \| D_0 \p _x  \| = \| Df|_{\E_x}  \| \le e^{\xs _\ell +\e}  \quad \text{and  hence } \quad  \| D_t \p _x  \| \le e^{\xs _\ell +2\e}$$
for all points $t \in \E_x$ with $\| t\| \le \delta$ for a sufficiently small $\delta >0$.
Hence $\p_x$ is a $e^{\xs _\ell +2\e}$-contraction on the $\delta$ ball in each $\E_x$,
and \eqref{P est} follows.

 Combining  \eqref{Ck,delta} and \eqref{P est} we estimate  the right side of \eqref{Hcom2} at $t'$ as
$$
\|\left( \Delta_n \circ \p ^n_x \right) (t') \|\,\le \, C \|  \p ^n_x  (t')\| ^{M+\a}
\le C  \| t'\| ^{M+\a} e^{n (M+\a) (\xs _\ell +2\e)}.
$$
Now we see that this contradicts  \eqref{HcomR} for
large $n$ since $(M+\a) \xs_{\ell}= r \xs_{\ell}<\xf _1$ and since  $\e<\e_0$ is sufficiently small.
Thus, $\Delta=0$, i.e., the holonomy map $H_{x,y}$ coincides with its Taylor polynomial of order $M$.
This completes the proof of Theorem \ref{Main Hol}.


 \section{Proof of Theorem \ref{th strong}} \label{hsmoothWs}
 
We first note that\\ 
(3)$\implies$(2)\,  is clear, and \\
(1)$\implies$(2)\, holds since we  always have $h(\w^{u})=W^{u}$

\hskip1.8cm  and hence the joint foliation  for $\w^{u}$ and $\w^{ss}$ is $h^{-1}(W^u\oplus W^{ss})$.\\ 
 The converse (2)$\implies$(1) is part of \cite[Theorem 1.1]{GSh}. 

Next we prove that \\
(1)$\implies$(5)$\implies$(4) and\\
(1)$\implies$(3), \,more precisely (5)+(2)$\implies$(3).

Finally we show (4)$\implies$(1) under the assumption on density of Lyapunov leaves.
 \vskip.2cm

{\bf (1)$\implies$(5).}
We assume that $h(\w^{ss})=W^{ss}$. We apply Theorems \ref{NF weak} and \ref{Main Hol}
with $U=\w^s$, $\w=\w^{ws}$ and $\v=\w^{ss}$, and  denote $\E^{ws}=T\w^{ws}$. 
Since $f$ is $C^1$ close to $L$, the linear extension 
$Df|_{\E^{ws}}$ has the corresponding $(\chi,\e)$ spectrum with small $\e$. 
Since $\w^{ss}$ is a $C^\infty$ subfoliation inside the leaves of $\w^s$, Theorems \ref{NF weak} 
 applies and yields the normal form coordinates  $\h_x$ on $\w^{ws}$  given by equation \eqref{weak hol eq}
$$\h_x:\,  \w^{ws}(x) \to \E_x^{ws}. \quad 
$$
The maps $\h_x$ are $C^1$ diffeomorphisms which depend continuously on $x$ in $C^1$ topology.
Further, Theorem \ref{Main Hol} yields that the holonomies 
$\H=\H^{ss}$ of $\w^{ss}$ inside $\w^{s}$ between leaves of $\w^{ws}$
are sub-resonance polynomials  in these coordinates 
\eqref{hol inv eq}, i.e., 
$$ \bar H_{x,y}=\h_y \circ \H_{x,y} \circ \h_x^{-1} :\, \E_x^{ws} \to \E_y^{ws}\quad\text{is in }\s_{x,y}.$$ 
Since $h(\w^{ss})=W^{ss}$ and $h(\w^{ws})=W^{ws}$, the two foliations form a global product structure inside the leaves of $\w^s$ conjugate by $h$ to that of the linear foliations. In particular,
the holonomies $\H$ are globally defined on the leaves of $\w^{ws}$. The corresponding 
linear holonomies $H$ for $L$ are translations along $W^{ss}$: 
$$\text{if $y\in W^{ss}(x)$\, then $\,H_{x,y}(z)=z+ (y-x)$,}$$
 and 
  $h$ conjugates $\H$ and $H$ as follows
 $$
 H_{h(x),\,h(y)}=h\circ \H_{x,y} \circ h^{-1}.
 $$

We fix an arbitrary $x\in \T^d$ and $y\in \w^{ws}(x)$. By the assumption, the linear leaf $W^{ss}(x)$ is dense in $\T^d$. Hence there exists a 
sequence of vectors $v_n\in E^{ss}$ such that $h(x)+v_n$ converges to $h(y)$. Denoting 
$y_n=h^{-1}(h(x)+v_n)$ we obtain a sequence of points $y_n\in \w^{ss}(x)$ converging to $y$.

The corresponding linear holonomies $H_{v_n}=H_{h(x),\,h(y_n)}$
converge in $C^0$ to the translation $H_v$ in $W^{ws}(h(x))$ by the vector $v=h(y)-h(x)$. Hence
the holonomies $\H_{x,y_n}$ converge in $C^0$ norm to the homeomorphism 
$$\H_{v}: \w^{ws}(x) \to \w^{ws}(y)= \w^{ws}(x) \;\text{ such that }H_{v}=h\circ \H_{v} \circ h^{-1}.
$$
Since the normal form coordinates $\h_y$ depend continuously on $y$, the corresponding lifted holonomies 
$ \bar H_{x,y_n}=\h_{y_n} \circ \H_{x,y_n} \circ \h_x^{-1} \in \s_{x,y_n}$ 
converge to the homeomorphism  
 $$  P_{x,y}=  \h_{y} \circ  \H_{v} \circ  \h_x^{-1} =
  \h_{y} \circ  h^{-1}\circ H_{v} \circ h \circ  \h_x^{-1}  :\;  \E_x^{ws} \to \E_y^{ws}.
 $$
The map $P_{x,y}$ is also a sub-resonance polynomial, i.e., $ P_{x,y}\in \s_{x,y}$.
  
Now we lift  the restriction of $h$ to $\w ^{ws}(x)$ to $\E_x$  using coordinates $\h_x$
  $$
  \bar h_x = h \circ \h_x^{-1} :\; \E_x^{ws} \to W^{ws}(h(x)),
  $$
and  conjugate  by $\bar h_x$ the translation $H_v$ of $W^{ws}(h(x))$ 
to the corresponding map of $\E_x$
$$ 
\bar H_v=   (\bar h_x)^{-1}\circ H_{v} \circ \bar h_x =
  \h_{x} \circ  h^{-1}\circ H_{v} \circ h \circ  \h_x^{-1}=  \h_{x} \circ \h_y^{-1} \circ P_{x,y} :\;  \E_x^{ws} \to \E_x^{ws}.
 $$
By the property of normal form coordinates, the map $\h_{x} \circ \h_y^{-1} :\;  \E_y^{ws} \to \E_x^{ws}$
is a composition of a sub-resonance polynomial in $ \s_{y,x}$ with a translation of $\E_x^{ws}$.
Since  $P_{x,y}$ is a  sub-resonance polynomial in $ \s_{x,y}$, we conclude that 
$\bar H_v \in \bar \s_x$, the finite dimensional Lie group  generated by $\s_x$ and the translations of $\E_x$.
  Thus $\bar h_x$ conjugates the action of $E^{ws}=\R^k$ by translations of $W^{ws}(h(x))$ 
with the corresponding continuous action of $\R^k$ by elements of the Lie group $\bar \s_x$.
This yields the injective  continuous homomorphism 
 $$
 \eta_x : E^{ws} \to  \bar \s_x \quad\text{given by }\; 
 \eta_x (v)= \bar H_v =(\bar h_x)^{-1} \circ H_v \circ \bar h_x.
 $$ 
It is a classical result that a continuous homomorphism between Lie groups is automatically a
 Lie group homomorphism, and so it is automatically $C^\infty$, see
for example \cite[Corollary 3.50]{Ha}. Thus $\eta_x$ is  a $C^\infty$ diffeomorphism onto its image in $\bar \s_x$.  
We conclude that $(\bar h_x)^{-1}$ is a $C^\infty$ diffeomorphism between $W^{ws}(h(x))$ and 
  $\E^{ws}_x$ since it is  determined by  $\eta_x$ as follows 
 $$
( \bar h_x)^{-1}(h(x)+v)= \bar H_v (0)=\eta_x (v)(0).
 $$

 Hence $\bar h_x  : \E_x^{ws} \to W^{ws}(h(x))$ is also a $C^\infty$ diffeomorphism.
 Further, since the normal form coordinates $\h_x$, as well as holonomies and their limits, 
 depend continuously on $x$, the constructed continuous action on $\E^{ws}_x$ and 
 the corresponding homomorphism $\eta_x$ also depend continuously on $x$. 
 This implies that $\eta_x$ depend continuously on $x$ in $C^\infty$ topology,
because it is determined by the corresponding linear homomorphism 
 of the Lie algebras. This yields that 
$\bar h_x  :\; \E_x^{ws} \to W^{ws}(h(x))$
 also depends continuously on $x$ in $C^\infty$ topology.
 We conclude that the restriction of $h$ to $\w ^{ws}(x)$
 $$
 h|_{\w^{ws}(x)} =\bar h_x \circ \h_x  : \; \w^{ws}(x) \to W^{ws}(h(x))
 $$
  is as regular as $\h_x$. 
If the leaves of $\w^{ws}$ are uniformly $C^{q}$, then so are the maps $\h_x$ given by
   \eqref{weak hol eq} since the holonomy $\H_x$ from a smooth transversal 
   to the leaf $\w^{ws}(x)$ is as regular as the leaves. Hence in this case $h|_{\w^{ws}(x)}$ is
  a uniformly $C^{q}$  diffeomorphism.    In particular, since the leaves  of $\w^{ws}$ are at least uniformly $C^{1+\text{H\"older}}$,  $h|_{\w^{ws}(x)}$ is at least a uniformly $C^{1+\text{H\"older}}$  diffeomorphism.
 
 \vskip.1cm 

To complete the proof of (5) we will now show that the component $h^{ws}$ is $C^\infty$ on $\T^d$. 
First, $h^{ws}$ is (locally) constant along $\w^{u+ss}$ and hence is uniformly
 $C^\infty$ along it, since its leaves are uniformly $C^\infty$. By Journ\'e lemma 
 \cite{J},\cite[Theorem 3.3.1]{KtN} it suffices to show that $h^{ws}$ is also uniformly 
 $C^\infty$  along smooth transversals to $\w^{u+ss}$. 
 
 As in the construction of normal forms on $\w^{ws}$ we consider the  embeddings 
  of small balls $B^{ws}_\rho(x)$ in $\E^{ws}_x$
as a local family of smooth transversals \eqref{transv}
 and denote
$$i_x :  B^{ws}_\rho(x)  \to \w^{s}, \quad i_x(B^{ws}_\rho(x))=\t_x, \quad  \H_x= \H_x^{\w^{ss}}: \t_x \to \w^{ws}(x).
$$
We recall that the normal form coordinates  \eqref{weak hol eq} are given by  
 $$
 \h_x= \hat \h_x  \circ  (\H_{x} \circ i_{x})^{-1} : \; \w^{ws} \to \E^{ws}_{x},
$$
where $ \hat \h_x :\E^{ws}_{x} \to \E^{ws}_{x}$ is the uniformly $C^\infty$ coordinate change 
for the extension $\f_x$.
Since we know that both $\bar h_x$  and $i_{x} \circ (\hat \h_x  )^{-1} $ are uniformly $C^\infty$ diffeomorphism and since
 $$
 \bar h_x = h \circ (\h_x)^{-1}=  h \circ \H_{x} \circ i_{x} \circ (\hat \h_x  )^{-1}: \; \E_x^{ws} \to W^{ws}(h{x})
$$
we conclude that  
 $ h \circ \H_{x} : \; \t_x \to W^{ws}(h{x})
 $
 is also uniformly $C^\infty$,  and in particular so is its $ws$-component. 
We claim that $(h \circ \H_{x})^{ws}=(h|_{\t_x})^{ws}$.   Indeed,  for any $y \in \t_x$ we have $\H_{x}(y) \in \w^{ss}(y)$ and, since $h(\w^{ss})=W^{ss}$,  we get $h (\H_{x}(y)) \in W^{ss}(h(y))$ and thus they have  the same $ws$-component. 
Therefore,  $(h|_{\t_x})^{ws}=h^{ws}|_{\t_x}$  is uniformly $C^\infty$ 
along this family of transversals to $\w^{u+ss}$, and
 we conclude that  $h^{ws}$ is $C^\infty$ on $\T^d$.

 This concludes the proof of\, (1)$\implies$(5).
  \vskip.3cm
  
{\bf (5)$\implies$(4).} Since (5) yields that $h|_{\w^{ws}(x)}$ is  a  $C^{1}$  diffeomorphism,  it remains to show that its derivative 
 is H\"older on $\T^d$.  Since $h$ maps ${\w^{ws}(x)}$ to the linear leaf $W^{ws}(h(x))$, this derivative coincides with the restriction of the derivative of $h^{ws}$,
 $$D(h|_{\w^{ws}(x)})(x)=D(h^{ws}|_{\w^{ws}(x)})(x)= D(h^{ws})|_{\E^{ws}(x)}:\E^{ws}_x \to E^{ws} \subset \R^d.$$
 Since $h^{ws}$ is $C^\infty$ on $\T^d$ by (5), this restriction is as regular as the subbundle $\E^{ws}_x$,
and so at least H\"older continuous on $\T^d$.
 \vskip.3cm
 
{\bf (1)$\implies$(3).} Since (1) implies (2), we have the joint topological 
foliation $\w^{u+ss}=h^{-1}(W^{u} \oplus W^{ss})$. Now we use (5), which follows from (1), 
to show that $\w^{u+ss}$ is conjugate to the linear foliation $W^{u} \oplus W^{ss}$ by  a $C^\infty$ diffeomorphism. 
We take $h$ to be the conjugacy close to the identity and write 
$$h(x)=x+\d(x)=x+\d^{ws}(x)+\d^{u+ss}(x),
$$
 where $\d : \T^d \to \R^d$ is split into components $\d^{ws} : \T^d \to E^{ws}$ and  $\d^{u+ss} : \T^d \to E^u\oplus E^{ss}$.
Now we consider the  map
$$\tilde h (x)=h(x)-\d^{u+ss}(x)=x+\d^{ws}(x): \,\T^d \to \T^d.
$$
We note that both $h$ and $\tilde h$ are $C^0$ close to the identity.
The first formula shows that $\tilde h$ is an adjustment of $h$ along $W^{u} \oplus W^{ss}$
and thus, since $h(\w^{u+ss})=W^{u} \oplus W^{ss}$, we also  have that $\tilde h(\w^{u+ss})=W^{u} \oplus W^{ss}$. Now we show that $\tilde h$ is a $C^\infty$ diffeomorphism of $\T^d$, and hence it smoothly 
conjugates  $\w^{u+ss}$  to $W^{u} \oplus W^{ss}$. 
We can locally write 
$$\tilde h (x)=x+\d^{ws}(x)=x^{u+ss}+x^{ws}+\d^{ws}(x)=x^{u+ss}+ h^{ws}(x).$$ 
 While the components $h^{ws}$, $x^{u+ss}$, and $x^{ws}$ are globally well-defined 
 only for the lifts to $\R^d$, they make sense locally on $\T^d$.
By (5), the component $h^{ws}$ is $C^\infty$ on $\T^d$ and hence so is $\tilde h $. 
Thus to prove that $\tilde h$ is  locally a $C^\infty$ diffeomorphism it suffices to show 
invertibility of its derivative. We view $D\tilde h$ with respect to splittings 
$\E^{ws}_x \oplus \E^{u+ss}_x \to E^{ws} \oplus E^{u+ss}$. Since $h^{ws}$ is constant
along $\w^{u+ss}$ and a $C^1$ diffeomorphism along $\w^{ws}$ we see that $D\tilde h$ is 
block triangular with invertible block $\E^{ws}_x  \to E^{ws}$. Now invertibility of  $D\tilde h$
follows from that of the other diagonal block  $D(x^{u+ss})|_{\E^{u+ss}_x}$. The latter 
 is clear since $\E^{u+ss}_x$ is transverse to $E^{ws}$ and thus the derivative of
$x^{u+ss}$, which is just $E^{u+ss}$-component of the identity, has full rank on $\E^{u+ss}_x$.
Thus $\tilde h$ is  locally a $C^\infty$ diffeomorphism.
The global surjectivity and injectivity of $\tilde h$ also follow from this since it is $C^0$ close to the identity.
 This concludes the proof of\, $(1)\implies (3)$.

\vskip.3cm

{\bf (4)$\implies$(1)\,  if the leaves of $W^1, \dots, W^\ell$ are dense in $\T^d$.}

Since $f$ is $C^1$ close to $L$, the linear extension $Df|_{\E^{ws}}$ has the corresponding $(\chi,\e)$ spectrum and the  H\"older continuous splitting
\begin{equation} \label{splitEws} \E^{ws}=\E^{1} \oplus \dots \oplus \E^{\ell}
\end{equation} 
which is $C^0$ close to the Lyapunov splitting for $L$ on $E^{ws}$. 
We recall that $\E^1$ is the strongest contraction and $\E^\ell$ is the weakest. We let
$$\E^{i,ss}=\E^{i} \oplus \dots \oplus \E^{1} \oplus \E^{ss}.$$
Since $\E^{i,ss}$ is a strong subfbundle of $\E^s$, it is tangent to a  $C^\infty$ subfoliation $\w^{i,ss}$
inside $\w^s$.
Since we assume that $h|_{\w^{ws}(x)}$ is a $C^{1}$  diffeomorphism, each bundle $\E^i$ is tangent to 
the foliation $\w^i=h^{-1}(W^i)$.  
The implication (4)$\implies$(1) follows from the next proposition.

\begin{proposition} \label{prop_for_ind}
Assume that $h$ is  a $C^{1}$ diffeomorphism along $\w^{ws}$ with the derivative 
$$D(h|_{\w^{ws}(x)})(x):\,\E^{ws}_x \to E^{ws} \subset \R^d \,\,\text{ H\"older continuous on $\T^d$.}
$$ 
If $\,h(\w^{i,ss})=W^{i,ss}$ then $\,h(\w^{i-1,ss})=W^{i-1,ss}$.
\end{proposition}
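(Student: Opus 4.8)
The goal is to prove that $\mathcal G:=h(\w^{i-1,ss})=W^{i-1,ss}$. Recall that inside the leaves of $\w^{i,ss}$, which are uniformly $C^\infty$ and contracted by $f$, the subfoliation $\w^{i-1,ss}$ is the strong part and $\w^i$ the weak part of the dominated splitting $\E^{i,ss}=\E^{i-1,ss}\oplus\E^{i}$; moreover $\w^{i-1,ss}$, being tangent to a strong subbundle, is a $C^\infty$ subfoliation of $\w^s$, so its holonomies inside $\w^{i,ss}$ are uniformly $C^\infty$, while $\w^i=h^{-1}(W^i)$ has uniformly $C^1$ leaves by the hypothesis that $h$ is a $C^1$ diffeomorphism along $\w^{ws}$ with H\"older derivative. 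Since $\E^i$ corresponds to a single Lyapunov exponent of $L$, the linear extension $Df|_{\E^i}$ has narrow $(\chi,\e)$-spectrum. I would therefore apply Theorems \ref{NF weak} and \ref{Main Hol} with $U=\w^{i,ss}$, $\w=\w^i$ and $\v=\w^{i-1,ss}$: this produces a normal form $\{\h_x\}$ for $f$ on $\w^i$ --- which linearizes $f$ along the leaves of $\w^i$, because with one exponent the only sub-resonance polynomials are linear --- and it shows that the holonomies of $\w^{i-1,ss}$ between leaves of $\w^i$ are linear maps in these coordinates.

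Next I would transport this picture to the linear model by means of $h$. Using $h(\w^i)=W^i$ and the regularity of $h$ along $\w^{ws}$, the restriction of $h$ to each leaf $\w^i(x)$, read in the normal-form coordinates $\h_x$ on $\w^i(x)$ and in the affine coordinates on $W^i(h(x))$, is a $C^1$ diffeomorphism that conjugates the linear contraction $\p_x=\h_{fx}\circ f\circ\h_x^{-1}$ to $L|_{E^i}$. Both are linear with spectrum narrowly concentrated near the eigenvalue modulus of $L$ on $E^i$; writing the conjugacy as its linear part at $0$ composed with a $C^1$ map tangent to the identity that commutes with $L|_{E^i}$, and iterating this commutation relation using the contraction, the uniform regularity of $h$ along leaves and the freedom to make $\e$ arbitrarily small, I would conclude that the conjugacy coincides with its linear part --- that is, $h$ is \emph{linear} along $\w^i$. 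Consequently $\mathcal G=h(\w^{i-1,ss})$ is an $L$-invariant topological foliation of $\T^d$ refining $W^{i,ss}$, whose leaves are transverse to $W^i$, and whose holonomies between leaves of $W^i$ are linear maps in the natural affine coordinates.

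Finally I would identify $\mathcal G$. Writing a leaf of $\mathcal G$ inside $W^{i,ss}(p)=p+E^{i,ss}$ as the graph $w\mapsto p+w+s_p(w)$ of a map $s_p\colon E^{i-1,ss}\to E^i$ with $s_p(0)=0$, the $L$-invariance of $\mathcal G$ gives $s_{fx}(w)=(L|_{E^i})\,s_x\bigl((L|_{E^{i-1,ss}})^{-1}w\bigr)$, hence $s_x=(L|_{E^i})^{-n}\circ s_{f^n x}\circ (L|_{E^{i-1,ss}})^{n}$ for all $n$. The linearity of the holonomies together with the same $L$-invariance forces the holonomies of $\mathcal G$ between $W^i$-leaves to be exact translations; using density of $W^i$ (equivalently of $\w^i$) together with density of $W^{ss}$, hence of $W^{i,ss}$, and the $\Z^d$-periodicity of $\mathcal G$ along the dense leaves of $W^{i,ss}$, this makes $s_p$ additive, hence linear. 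An $L$-invariant \emph{linear} foliation of $W^{i,ss}$ transverse to $W^i$ must be $W^{i-1,ss}$, because $L|_{E^{i-1,ss}}$ and $L|_{E^i}$ have disjoint spectra; thus $s_p\equiv 0$ and $\mathcal G=W^{i-1,ss}$. Since $h$ is a homeomorphism and $\dim\w^{i-1,ss}=\dim W^{i-1,ss}$, this yields $h(\w^{i-1,ss})=W^{i-1,ss}$.

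The step I expect to be the main obstacle is the regularity bookkeeping underlying the last two paragraphs. The map $h$ is in general only H\"older transverse to $\w^i$ --- in particular along the $\w^{ss}$ part of $\w^{i-1,ss}$ --- so the images $h(\w^{i-1,ss}(x))$ are a priori only H\"older graphs over $E^{i-1,ss}$, and the iterations above converge only by virtue of the linearity of the holonomies in normal-form coordinates and of the density of the Lyapunov foliation $W^i$; indeed, without this density the implication genuinely fails, as the reducible example in Section \ref{examples} shows, which is exactly why the density hypothesis must be used here.
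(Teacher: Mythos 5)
Your plan is genuinely different from the paper's proof, and it has two real gaps. First, the step where you linearize $h$ along $\w^i$ in normal-form coordinates ``using the freedom to make $\e$ arbitrarily small'' does not close: $\e$ is fixed by how $C^1$-close $f$ is to $L$, and that closeness is chosen before, and independently of, the H\"older exponent $\beta$ of $D(h|_{\w^{ws}(x)})$, which the hypothesis supplies with no lower bound; an iteration of the commutation relation on the nonlinear side needs roughly $(1+\beta)\e<\beta|\chi_i|$, so the quantifiers are in the wrong order. The paper avoids this entirely and uses no normal forms for this proposition: it considers the holonomies $\tilde H_{a,b}$ of the image foliation $h(\w^{i-1,ss})$ between leaves of the \emph{linear} foliation $W^i$, writes $\tilde H_{a,b}=L^{-n}\circ \tilde H_{a_n,b_n}\circ L^{n}$, and exploits that on the linear side the nonconformality $\|L^{-n}|_{W^i}\|\cdot\|L^{n}|_{W^i}\|$ grows only polynomially (Jordan blocks inside a single Lyapunov space), while $\|D\tilde H_{a_n,b_n}-\operatorname{Id}\|\to 0$ exponentially because $D(h|_{\w^{ws}})$ is H\"older on $\T^d$ and the $C^1$ holonomies of $\w^{i-1,ss}$ have H\"older derivative; any positive H\"older exponent then beats the polynomial factor, giving $D\tilde H_{a,b}=\operatorname{Id}$, i.e.\ the holonomies are parallel translations. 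Your normal-form detour (Theorems \ref{NF weak} and \ref{Main Hol} with $U=\w^{i,ss}$) is thus both unnecessary and, as you set it up, insufficient.

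Second, your endgame is asserted rather than proved. Even granting that the holonomies of $h(\w^{i-1,ss})$ between $W^i$-leaves are translations, the passage ``translations $+$ density $+$ $\Z^d$-periodicity $\Rightarrow$ $s_p$ additive $\Rightarrow$ linear $\Rightarrow$ $s_p\equiv 0$'' is not an argument: the translation property compares different leaves of $h(\w^{i-1,ss})$ over a fixed pair of $W^i$-leaves and does not formally yield additivity of the graph maps $s_p$, and you would also need equivariance of $s_p$ in the base point. This is exactly where the paper does the remaining work: at a fixed point $a$ of $L$ it assumes some $z_1\in W^{i-1,ss}(a)$ not on the image leaf through $a$, then uses the translation lemma together with density of the $W^i$-leaves to construct points $x_n$ on that image leaf whose displacements along $W^{i-1,ss}$ and along $W^i$ both grow linearly in $n$; applying $L^{N(n)}$ contradicts the fact that $L$ contracts $W^{i-1,ss}$ exponentially faster than $W^i$. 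Density of the $W^{i-1,ss}$-leaves (which follows from density of $W^{ss}$) then propagates the equality from the fixed point to all of $\T^d$. Your sketch contains neither this drift mechanism nor a substitute for it, so as written the proposal does not prove the proposition.
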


We apply the proposition inductively from $i=\ell$ to $i=1$. 
For $i=\ell$, the assumption $h(\w^{i,ss})=W^{i,ss}$ is satisfied since it is $h(\w^s)=W^s$. For $i=1$ we obtain (1).

The proof of this proposition is similar to that of \cite[Proposition 2.5]{GKS11}. However, in \cite{GKS11} the automorphism $L$ was assumed to be irreducible, while we only assume density of the leaves of $W^{ss}$ and the Lyapunov subfoliations of $W^{ws}$.

\vskip.1cm
\noindent {\it Proof of Proposition \ref{prop_for_ind}.}  We will use the
following notation in this proof:
$$
\VL=W^i, \quad \UL=W^{i-1,ss}, \quad \WL=W\oplus V = W^i \oplus W^{i-1,ss} =W^{i,ss}, 
$$
and similarly for the corresponding nonlinear invariant foliations of $f$,
$$
\Vf=\w^i, \quad \Uf=\w^{i-1,ss}, \quad \Wf=\w\oplus \w = \w^{i,ss}. 
$$

By the assumption, $h(\Wf)=\WL$.
We let $\F=h(\Uf)$. Then $\F$ is a subfoliation of $\WL$ with
continuous leaves.
We need to show that $\F=\UL$. Since $\VL=h(\Vf)$, the foliation $\F$ is
topologically transverse to $\VL$, i.e., any  leaf of $\F$
and any leaf of $\VL$ in the same leaf of $\WL$ intersect at
exactly one point.
Thus for any point $a\in\Td$ and any $b\in\F(a)$ we can define the holonomy map $\ti H_{a,b} :
\VL(a) \to \VL(b)$ along the  foliation $\F$.
The key step is to show that $\ti H_{a,b}$ a parallel translation inside $\WL$.
This is similar to \cite[Lemma 2.6]{GKS11}, but in \cite{GKS11} the automorphism $L$ was assumed to be irreducible, which yields conformality of $L$ on $\VL$. We modify the argument for the general case when
Jordan blocks may create nonconformality.

\begin{lemma}\label{translation}
For any point $a\in\Td$ and any $b\in\F(a)$ the holonomy map
$\ti H_{a,b}$  is a restriction to $\VL(a)$ of a parallel translation inside $\WL$.
\end{lemma}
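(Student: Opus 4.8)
The plan is to show that $\ti H_{a,b}: \VL(a) \to \VL(b)$ is a parallel translation by exploiting the $f$-invariance of the foliations $\VL$ and $\F$ together with the contraction on $\WL \subset W^s$ and the nonconformality estimates coming from the $(\chi,\e)$-spectrum of $Df|_{\E^{ws}}$. First I would observe that since $\F = h(\Uf)$ and $\Uf = \w^{i-1,ss}$ is $f$-invariant, while $\VL = h(\Vf)$ with $\Vf = \w^i$ also $f$-invariant, we get the commutation relation $L^n \circ \ti H_{a,b} = \ti H_{L^na, L^nb} \circ L^n$ on $\VL(a)$. Because $L$ acts linearly on $\R^d$ and $\VL = W^i$ is an affine subspace direction, I can lift everything to the universal cover and write $\ti H_{a,b}(z) = z + g(z)$ for a continuous function $g$ from $\VL(a)$ to $\WL$ (valued in the $\WL$-direction), and the goal becomes showing $g$ is constant.

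The key step will be to run the standard ``contraction kills the nonlinear part'' argument, but now keeping track of the Jordan-block nonconformality of $L$ on $W^i$. Differentiating the commutation relation gives $L^n|_{V} \circ D\ti H_{a,b}(z) = D\ti H_{L^na,L^nb}(L^nz) \circ L^n|_{V}$; since the holonomies $\ti H_{L^na,L^nb}$ have uniformly bounded $C^1$ norm (the $\F$-distance between $L^na$ and $L^nb$ stays bounded, indeed goes to $0$, because $\F \subset W^{i,ss} \subset W^s$), and since $L^n$ expands the $W$-direction relative to the $V$-direction in the backward sense — more precisely the relevant ratio of norms $\|L^{-n}|_W\|\cdot\|L^n|_V\|$ decays exponentially up to subexponential Jordan factors once we are at level $i$ versus the strictly stronger directions in $U = W^{i-1,ss}$ — one concludes that the component of $D\ti H_{a,b}$ mapping into the $\E^{<i}$-directions (the strictly stronger part of $\WL$) must vanish. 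Thus $D\ti H_{a,b}$ takes values in the $V$-direction only, i.e.\ $\ti H_{a,b}$ maps $\VL(a)$ into a single translate of $\VL$ and is an affine map of that leaf; then a second application of the commutation relation, comparing the linear (Jordan) part, forces the linear part of $\ti H_{a,b}$ to be the identity, so $\ti H_{a,b}$ is a translation. The H\"older regularity of $D(h|_{\w^{ws}})$ is what guarantees the bundles $\E^i$ and hence the foliations $\w^i$, $\F$ are well-defined with the needed transversality and boundedness, so that these iteration estimates are legitimate.

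The main obstacle I expect is precisely the Jordan-block nonconformality: when $L|_{W^i}$ is not conformal one cannot simply quote $\|L^n|_V\| \asymp \la_i^n$, and the naive estimate $\|L^{-n}|_W\|\cdot\|L^n|_V\| \to 0$ can fail by polynomial factors if $W$ and $V$ share the same eigenvalue modulus. The fix is to use that $\WL = W^i \oplus W^{i-1,ss}$ decomposes with $W^{i-1,ss}$ consisting of \emph{strictly stronger} Lyapunov exponents than $\chi_i$, so the decay is genuinely exponential on the relevant cross-terms and absorbs any polynomial Jordan growth; the only delicate case is the ``diagonal'' block $W^i \to W^i$ where one instead argues directly that a $C^1$ holonomy which is affine on each leaf and intertwines $L^n|_{W^i}$ with bounded-norm holonomies must have identity linear part, using that $L^n|_{W^i}$ together with its inverse cannot both be polynomially bounded unless restricted to a genuine translation. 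I would isolate this as the technical heart, modeled on \cite[Lemma 2.6]{GKS11} but replacing conformality by the dominated-splitting structure within $\WL$.
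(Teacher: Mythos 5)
Your overall scheme (iterate the holonomy under $L$, use the contraction of $\WL$, control the Jordan nonconformality) is the paper's scheme, and you correctly located the difficulty in the nonconformality of $L$ on $\VL=W^i$. But the step you present as the key one is vacuous, and the step you defer is exactly where your argument fails. By definition $\ti H_{a,b}$ sends $\VL(a)$ into $\VL(b)$, and these are parallel affine translates of the linear space $\VL=W^i$ inside $\WL=W^{i,ss}$; hence $D\ti H_{a,b}$ is automatically a map $\VL\to\VL$ and the $\UL$-component ($\UL=W^{i-1,ss}$) of $g(z)=\ti H_{a,b}(z)-z$ is constant. So "the component into the strictly stronger directions vanishes" carries no content, and it also does not imply that $\ti H_{a,b}$ is affine on the leaf (that would require the derivative to be constant, which is not established). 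The entire lemma is the statement you dispose of in one sentence about the "diagonal block."

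That deferred step, as you argue it, is not valid: uniform boundedness of the $C^1$ norms of $\ti H_{a_n,b_n}$ together with the intertwining $D_a\ti H_{a,b}=L^{-n}|_{\VL}\circ D_{a_n}\ti H_{a_n,b_n}\circ L^{n}|_{\VL}$ does not force $D\ti H_{a,b}=\Id$. Any linear map commuting with $L|_{\VL}$, for instance $\la\,\Id$ with $\la\ne 1$ or $L|_{\VL}$ itself, is preserved by this conjugation and stays bounded, so no boundedness argument alone can rule it out. What is needed is a rate: writing $D_{a_n}\ti H_{a_n,b_n}=\Id+\Delta_n$, one must show $\|\Delta_n\|\to 0$ exponentially in $n$, so as to beat the polynomial quasiconformal distortion $\|L^{-n}|_{\VL}\|\cdot\|L^{n}|_{\VL}\|\le Cn^{2k}$ created by Jordan blocks inside the single Lyapunov space $W^i$. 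This is precisely where hypothesis (4) enters in the paper's proof: since $\ti H_{a_n,b_n}=h\circ\H_{h^{-1}(a_n),h^{-1}(b_n)}\circ h^{-1}$, where the holonomies $\H_{c,d}$ of $\Uf=\w^{i-1,ss}$ inside $\w^{i,ss}$ are $C^1$ with $D_c\H_{c,d}$ H\"older in $(c,d)$ and $D_c\H_{c,c}=\Id$, and since $D(h|_{\w^{ws}})$ is H\"older on $\T^d$ (together with H\"older continuity of $\E^i$), one gets $\|\Delta_n\|\le C\,\dist(a_n,b_n)^{\theta}$, which decays exponentially because $a_n,b_n$ lie in a common contracted leaf. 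In your proposal the H\"older regularity of $D(h|_{\w^{ws}})$ is invoked only to say the bundles and foliations are well defined, so this quantitative heart of the proof is missing and the argument does not close.
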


\begin{proof}

For any point $c\in\Td$ and any $d\in\Uf(c)$ we denote by $\H_{c,d} : \Vf(c) \to \Vf(d)$ 
the holonomy along the foliation $\Uf$. Since $\Uf$ is a strong subfoliation of $\Wf$, it is $\Ci$ inside 
the leaves of $\Wf$ and hence the holonomies $\H_{c,d}$ are $C^{1}$ with the derivative 
$$D_c\H_{c,d}: T_c  \Vf \to T_d \Vf$$
  depending H\"older continuously on $c$ and $d$.
Since $\F=h(\Uf)$ and $h(\Vf)=\VL$ we have
$$
\ti H_{a,b} = h \circ \H_{h^{-1}(a),h^{-1}(b)} \circ h^{-1}.
$$
It follows from the  regularity assumption on $h|_{\w^{ws} }$ that  the maps $\ti H_{a,b}$ are also $C^{1}$. To show that $\ti H_{a,b}$ 
is a parallel translation, we prove that the differential $D\ti H_{a,b}=\Id$.
We apply  $L^{n}$, which  contracts $\WL$,  and denote $a_n=L^{n}(a)$ and $b_n=L^{n}(b)$. Since $\F=h(\Uf)$  and
$f$ preserves the foliation $\Uf$, the map
$L$ preserves  $\F$ and we can write
$$
\ti H_{a,b} = L^{-n}\circ  \ti H_{a_n,\,b_n} \circ L^{n}.
$$
Differentiating and denoting  $D_{a_n}\ti H_{a_n,b_n}=\Id+\Delta_n$
we obtain
$$
D_a \ti H_{a,b} = (L^{-n}|_{\VL} )\circ ( D_{a_n}\ti H_{a_n,b_n}) \circ L^{n}|_{\VL}
=  \Id + L^{-n}|_{\VL}\circ \Delta_n \circ L^{n}|_{\VL}.
$$
Since $\VL$ is a Lyapunov foliation for $L$, all eigenvalues of $L$ on $\VL$ 
have the same modulus and hence the quasiconformal distortion of $L^n|_{\VL}$
grows at most polynomially, 
$$
   \|L^{-n}|_{\VL}\| \cdot\| L^n|_{\VL}\| \le C n^{2k}
   \;\text{ for all }n,
$$
where $k+1$ is the largest size of Jordan blocks of $L^n|_{\VL}$. Thus we obtain
$$
   \|L^{-n}|_{\VL}\circ \Delta_n \circ L^{n}|_{\VL}\|\le C n^{2k} \|\Delta_n\|
   \;\text{ for all }n.
$$
It remains to show that $\|\Delta_n\| \to 0 $ exponentially in $n$. We
differentiate the equation
$
  \ti H_{a_n,b_n} = h \circ \H_{h^{-1}(a_n),\,h^{-1}(b_n)} \circ h^{-1}
$  
at $a_n$
$$
  D_{a_n}\ti H_{a_n,b_n} = (D_{h^{-1}(b_n)}h|_{\Vf})\circ
 ( D_{h^{-1}(a_n)}\H_{h^{-1}(a_n),\,h^{-1}(b_n)}) \circ (D_{a_n} (h|_{\Vf})^{-1}),
$$
and denoting  $ D_{h^{-1}(a_n)}\H_{h^{-1}(a_n),\,h^{-1}(b_n)}=\Id+\Delta_n'$\, we obtain 
$$
  D_{a_n}\ti H_{a_n,b_n} = 
 ( D_{h^{-1}(b_n)}h|_{\Vf})\circ D_{a_n} (h|_{\Vf})^{-1} +  (D_{h^{-1}(b_n)}h|_{\Vf})\circ
 \Delta_n' \circ D_{a_n} (h|_{\Vf})^{-1}.
 $$
Denoting $D_{h^{-1}(b_n)}h|_{\Vf}\circ D_{a_n} (h|_{\Vf})^{-1} =\Id+\Delta_n''$\, we conclude that
$$
\| \Delta_n \| \,=\,\|\Id-D_{a_n}\ti H_{a_n,b_n}\|\,\le\, \| \Delta_n'' \| +  \|D_{h^{-1}(b_n)}h|_{\Vf}\| \cdot 
\| \Delta_n' \| \cdot \| D_{a_n} (h|_{\Vf})^{-1}\|.
 $$
By the  regularity assumption on $h|_{\w^{ws} }$  and H\"older continuity of $T_x\Vf=\E^i_x$
we have H\"older dependence of $D_x(h|_{\Vf}):\,T_x\Vf \to \VL$ on $x$.
It follows that  that 
$$ \begin{aligned} & \|D_{h^{-1}(b_n)}h|_{\Vf}\| \cdot  \| D_{a_n} (h|_{\Vf})^{-1}\|\;\text{ is uniformly bounded 
and} \\
&\|\Delta _n''\|\,\text{ is H\"older in }\dist(a_n, b_n).
\end{aligned}
$$
 Also,   $\| \Delta_n' \| $ is H\"older in $\dist(a_n, b_n)$
since $D_c\H_{c,d}$  depends H\"older continuously on $c$ and $d$ and $D_c\H_{c,d}=\Id$.
Now since  $\dist(a_n, b_n)\to 0$ exponentially as $n\to \infty$ we conclude that so does $\|\Delta_n\| $  and hence $D\ti H_{a,b}=\Id$.
\end{proof}

Now we complete the proof of Proposition  \ref{prop_for_ind} as in \cite[Proposition 2.5]{GKS11}. 
Let $a$ be a fixed point of $L$ and  let $B$ be the unit ball
in $\UL(a)$ centered at~$a$. If $B\subset\F(a)$, then $\UL(a)=\F(a)$
by invariance of $V$ and $\tilde V$ under $L^{-1}$. By the assumption, the leaf $\UL(a)$  is dense in $\T^d$.
It follows that the set of points $x$ such that
$\UL(x)=\F(x)$ is dense in $\T^d$ and hence $\UL=\F$.
Therefore, it suffices to show that $B\subset\F(a)$.

We argue by contradiction. Assume that there is $z_1\in B$ such that
$z_1\notin\F(a)$. Let $x_1=\VL(z_1)\cap\F(a).$
Since $\VL$ has dense leaves we can choose a sequence
$\{b_n,n\ge1\}$ in $\VL(a)$ so that $b_n\to x_1$ as $n\to\infty$. Let
$y_n=\ti H_{a,x_1}(b_n).$
Continuity of $\F$ implies that the sequence $\{y_n\}$ converges to a point
$x_2  \in \F(a)$. Moreover, Lemma~\ref{translation} implies that $\{x_1,x_2\}$
is a parallel  translation of $\{a,x_1\}$.

We continue this procedure inductively to construct the sequence
$\{x_n,n\ge 1\}$ in $\F(a)$. Let $z_n=\VL(x_n)\cap\UL(a).$
Then by the construction
$$
 d_{\UL}(z_n,a)=n\cdot d_{\UL}(z_1,a) \quad\text{and}\quad
d_{\VL}(x_n,z_n)=n\cdot d_{\VL}(x_1,z_1),
$$
but this is impossible since $L$  contracts $\UL$ exponentially
stronger than $\VL$. Indeed, if we take $N(n)$ to be the smallest 
integer such that $L^{N(n)}(z_n)\in B$, then 
$$
d_{\VL}(L^{N(n)}(x_n), L^{N(n)}(z_n))\to\infty \quad \text{ as } \; n\to\infty,
$$
which contradicts $\,\max_{z\in B}\,d_{\VL}(z,\VL(z)\cap\F(a))<\infty.$\,
Thus $\F=\UL$. 
$\QED$


\section{Proof of Theorem \ref{global strong} } \label{proof global strong}
The proof of Theorem \ref{th strong} works for the global setting with only minor adjustments.
In this case, (2)$\implies$(1) as well as $h(\w^{ws})=W^{ws}$ is given by \cite[Theorem 1.1]{GSh}.
Further, \cite[Theorem 1.2]{GSh} yields the dominated splitting and spectrum for $Df|_{\E^{ws}}$ 
matching that of $L$, which implies that $Df|_{\E^{ws}}$ has $(\chi,\e)$ spectrum for any $\e>0$.  
The remaining arguments work without change,  except in (1)$\implies$(3) we only obtain
that $\tilde h$ is a local $C^\infty$ diffeomorphism; the injectivity of $\tilde h $ is not clear 
since it is not close to the identity. Still $\tilde h$ gives local foliation charts for $\w^{u+ss}$. Moreover, the H\"older continuous metric on $\E^{ws}$
invariant under the holonomies of  $\w^{u+ss}$ can be pulled by $\tilde h$ using the linear foliation
as in the proof of Theorem \ref{strong+weak}(3)$\implies$(2) below, since this is a local property.


\section{Proof of Theorem \ref{th weak fol}} \label{proof weak fol}

Theorem \ref{th weak fol} follows from Theorem \ref{th weak hol} below, where we assume only the regularity of holonomies of $\w^{ws}$ between the leaves of $\w^{ss}$.

\begin{theorem} [Rigidity of weak holonomies] \label{th weak hol} 
Let $L$ be a hyperbolic automorphism of $\,\T^d$ with dense leaves of $W^{ws}$. 
Let $f$ be a $C^\infty$ diffeomorphism sufficiently $C^1$ close to $L$, and let $h$ be a topological conjugacy between $f$ and $L$. 

Let $r_{ss}(L)$ be given by \eqref{rss} and let $q>1$ be a {\em noninteger} such that the leaves of $\w^{ws}$ are uniformly $C^q$. Then  for the statements below we have \,\, 
$$(1) \iff (1') \iff (2) \implies (3)  \implies (4).
$$ \vskip.1cm
 
\begin{itemize} 
 \item[(1)]\, Holonomies of $\w^{ws}$ between the leaves of $\w^{ss}$ are uniformly $C^r$ with $r>r_{ss}(L)$,
  \vskip.1cm
 
 \item[(1$'$)]\, Holonomies of $\w^{ws}$ between the leaves of $\w^{ss}$ are uniformly $C^\infty$,
  \vskip.1cm

 \item[(2)]\, $h^{ss}$  is a uniformly $C^\infty$ diffeomorphism along $\w^{ss}$,
  and $h^{ss}$ is $ C^{q}$ on $\T^d$, 
 \vskip.1cm
 
 \item[(3)]\, The joint foliation  $\w^{u+ws}$ is conjugate to the linear foliation $W^{u} \oplus W^{ws}$ 
 \,by a $C^{q}$ diffeomorphism, 
 \vskip.1cm

 \item[(4)]\, $\w^{ws}$ is a uniformly $C^{q}$ subfoliation of $\w^s$.
 \vskip.2cm
 
\end{itemize}  
\noindent If in addition $h(\w^{ss})=W^{ss}$, \, then\, 
 \vskip.1cm
\em{(1,\,1$'$,\,2)} $\iff$  $h$  is a uniformly $C^{\infty}$ diffeomorphism along $\w^{ss}$

\hskip1.55cm $\iff$  $h$  is a uniformly $C^r$  diffeomorphism along $\w^{ss}$ with $r>r_{ss}(L)$.

\end{theorem}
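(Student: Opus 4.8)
The plan is to prove Theorem \ref{th weak hol} by establishing the cycle $(1)\iff(1')\iff(2)$ first, then the one-directional implications $(2)\implies(3)\implies(4)$, and finally the addendum concerning smoothness of $h$ along $\w^{ss}$ when $h(\w^{ss})=W^{ss}$. The heart of the matter is the same mechanism used in Theorem \ref{th strong}: apply Theorem \ref{NF weak} and Theorem \ref{Main Hol} with $U=\w^s$, but now with the roles reversed, $\w=\w^{ss}$ and $\v=\w^{ws}$. The hypothesis $r>r_{ss}(L)$ is exactly the narrow-spectrum condition $\chi_1/\chi_\ell<r$ for $Df|_{\E^{ss}}$ needed to invoke normal forms on $\w^{ss}$; since the leaves of $\w^{ss}$ are uniformly $C^\infty$ we may even use Theorem \ref{NFstrong} directly to get uniformly $C^\infty$ normal-form coordinates $\h_x:\w^{ss}(x)\to\E^{ss}_x$. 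The point of assuming regularity of the $\w^{ws}$-holonomies between $\w^{ss}$-leaves is precisely to make $\v=\w^{ws}$ play the role of the ``smooth-holonomy'' foliation in Theorem \ref{Main Hol}, even though its leaves are only $C^{1+\text{H\"older}}$.

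For $(1)\implies(2)$: given uniformly $C^r$ holonomies of $\w^{ws}$ between $\w^{ss}$-leaves with $r>r_{ss}(L)$, Theorem \ref{Main Hol} shows these lifted holonomies are sub-resonance polynomials in the normal-form coordinates on $\w^{ss}$. Now one runs the density argument of Theorem \ref{th strong}(1)$\implies$(5) verbatim with $W^{ws}$ in place of $W^{ss}$: the leaves of $W^{ws}$ are dense in $\T^d$ by hypothesis, so for $x$ and $y\in\w^{ss}(x)$ one approximates $y$ by points on $\w^{ws}(x)$, takes $C^0$-limits of the lifted holonomies, conjugates by $\bar h_x=h\circ\h_x^{-1}$, and obtains an injective continuous homomorphism $\eta_x:E^{ss}\to\bar\s_x$ into the finite-dimensional Lie group generated by $\s_x$ and translations; automatic smoothness of continuous Lie-group homomorphisms then forces $\bar h_x$, hence $h|_{\w^{ss}(x)}$, to be uniformly $C^\infty$ (the leaves of $\w^{ss}$ being $C^\infty$, there is no loss). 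That $h^{ss}$ is constant along $\w^{u+ws}$ (whose leaves are only $C^{1+\text{H\"older}}$, hence the appearance of the parameter $q$) plus a Journ\'e-type argument along smooth transversals gives $h^{ss}\in C^q(\T^d)$. The direction $(2)\implies(1')$: if $h^{ss}$ is $C^\infty$ along $\w^{ss}$ then the holonomies of $\w^{ws}$ between $\w^{ss}$-leaves equal $h^{-1}\circ(\text{linear }W^{ws}\text{-translation})\circ h$ restricted appropriately, and the $C^\infty$ regularity of $h|_{\w^{ss}}$ transfers; and $(1')\implies(1)$ is trivial. For $(2)\implies(3)$ one adjusts $h$ along $W^u\oplus W^{ws}$ to $\tilde h(x)=x+\d^{ss}(x)$ exactly as in Theorem \ref{th strong}(1)$\implies$(3), using $h^{ss}\in C^q(\T^d)$ and block-triangularity of $D\tilde h$; then $(3)\implies(4)$ is immediate since $\w^{ws}$ is the image under the $C^q$ conjugating diffeomorphism of the linear $W^{ws}$ inside $W^s$.

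For the final addendum: assume now $h(\w^{ss})=W^{ss}$ in addition. Then $h$ already maps $\w^{ss}$ to $W^{ss}$ and $\w^{ws}$ to $W^{ws}$, so $h|_{\w^s(x)}$ has block-triangular structure with respect to $\E^{ss}_x\oplus\E^{ws}_x$, and $h|_{\w^{ss}(x)}=h^{ss}|_{\w^{ss}(x)}$ up to the identity on the $ss$-coordinate. Hence $h|_{\w^{ss}}$ is uniformly $C^\infty$ precisely when $h^{ss}$ is uniformly $C^\infty$ along $\w^{ss}$, which is the content of (2); conversely if $h|_{\w^{ss}}$ is merely uniformly $C^r$ with $r>r_{ss}(L)$ then its lift to normal-form coordinates plus the holonomy argument above forces it to be $C^\infty$, closing the equivalence. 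I expect the main obstacle to be the $(1)\implies(2)$ step, and specifically two technical points within it: first, verifying that Theorem \ref{NF weak}/\ref{Main Hol} apply with the low-regularity foliation $\w^{ws}$ in the role of $\v$ — one must check that ``uniformly $C^r$ holonomies of $\w^{ws}$ between $\w^{ss}$-leaves'' is genuinely the hypothesis on $\v$-holonomies those theorems need, which it is by design; and second, the Journ\'e gluing to upgrade $h^{ss}$ from ``$C^\infty$ along $\w^{ss}$ and along smooth transversals to $\w^{u+ws}$'' to ``$C^q$ on $\T^d$'', where the ceiling $q$ (rather than $\infty$) enters through the limited $C^q$ regularity of the leaves of $\w^{ws}$ and hence of the transversal holonomies. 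Everything else is a faithful transcription of the arguments already carried out for Theorem \ref{th strong}.
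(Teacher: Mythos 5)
Your overall scheme does match the paper's (normal forms on $\w^{ss}$ via Theorem \ref{NFstrong}, holonomy invariance via Theorem \ref{Main Hol} with $U=\w^s$, $\w=\w^{ss}$, $\v=\w^{ws}$, the adjustment map $\tilde h(x)=x+\d^{ss}(x)$ for (2)$\implies$(3), and intersection with $\w^s$ for (3)$\implies$(4)). However, the central step (1)$\implies$(2) has a genuine gap as you describe it: you claim the density/holonomy argument of Theorem \ref{th strong}(1)$\implies$(5) runs \emph{verbatim} with $\bar h_x=h\circ\h_x^{-1}$ and that it forces $h|_{\w^{ss}(x)}$ to be uniformly $C^\infty$. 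Unlike in Theorem \ref{th strong}, here $h(\w^{ss})=W^{ss}$ is \emph{not} assumed: $h$ maps $\w^{ss}(x)$ into $W^{s}(h(x))$ but not into the linear leaf $W^{ss}(h(x))$, so translations of $E^{ss}$ do not act on the image $h(\w^{ss}(x))$ and the homomorphism $\eta_x:E^{ss}\to\bar\s_x$ cannot be defined by conjugating translations with $h\circ\h_x^{-1}$. Moreover, the limits obtained by approximating $y\in\w^{ss}(x)$ by $y_n\in\w^{ws}(x)$ correspond under $h$ to linear $W^{ws}$-holonomies between the \emph{nonlinear} transversals $h(\w^{ss}(x))$ and $h(\w^{ss}(y_n))$, which are not translations. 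Finally, your conclusion is too strong and false in general: without $h(\w^{ss})=W^{ss}$ the $ws$-component of $h$ along $\w^{ss}$ is typically only H\"older, so $h|_{\w^{ss}(x)}$ need not even be $C^1$; statement (2) asserts smoothness only of $h^{ss}$ along $\w^{ss}$.

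The missing idea --- the one place where the paper's proof genuinely deviates from Theorem \ref{th strong} --- is to replace $h|_{\w^{ss}(x)}$ by $\hat h_x= H^{ws}_{h(x)}\circ h|_{\w^{ss}(x)}:\w^{ss}(x)\to W^{ss}(h(x))$, i.e.\ to post-compose with the linear holonomy along $W^{ws}$, so that $\hat h_x=h^{ss}|_{\w^{ss}(x)}$ under a local identification. Then $\H_{x,y_n}=(\hat h_{y_n})^{-1}\circ H_{v_n}\circ \hat h_x$ with genuine translations $H_{v_n}$, the $C^0$-limit is conjugate via $\hat h_x$ to the translation by $\hat v=\hat h_x(y)-h(x)\in E^{ss}$, and the Lie-group argument is run for $\bar h_x=\hat h_x\circ\h_x^{-1}$, yielding smoothness of $h^{ss}$ (not of $h$) along $\w^{ss}$; after that your Journ\'e step giving $h^{ss}\in C^{q}(\T^d)$ is as in the paper. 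The same correction is needed in your (2)$\implies$(1$'$) step: the $\w^{ws}$-holonomies are conjugated to the linear translations by $\hat h$ (that is, by $h^{ss}|_{\w^{ss}}$), whose regularity is exactly what (2) provides, whereas ``$C^\infty$ regularity of $h|_{\w^{ss}}$'' is not available there. (In the addendum, where $h(\w^{ss})=W^{ss}$, $\hat h_x$ does reduce to $h|_{\w^{ss}(x)}$ and your reading is fine.) A smaller inaccuracy: in (3)$\implies$(4) the $C^{q}$ conjugacy is not known to send the linear subfoliation $W^{ws}$ to $\w^{ws}$; the correct deduction is that $\w^{u+ws}$ is a $C^{q}$ foliation and one obtains (4) by intersecting it with the uniformly $C^\infty$ leaves of $\w^{s}$.
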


\subsection{Deducing Theorem \ref{th weak fol} from Theorem \ref{th weak hol}.}$\;$

 We recall that, for a {\em noninteger} $r>1$, a foliation is $C^r$ if and only if its leaves and local holonomy maps
are uniformly $C^r$, see e.g. \cite[Theorem 6.1(i)]{PSW}.

Also we always have  $h(\w^{ws})=W^{ws}$, and hence  $\w^{u}$ and  $\w^{ws}$ are jointly integrable and the foliation $\w^{u+ws}$ is $h^{-1}(W^{u}\oplus W^{ws})$. 
  \vskip.1cm
{\bf (1)$\implies$(2).}\, (1) implies Theorem \ref{th weak hol}(1) and hence Theorem \ref{th weak hol}(2). 
Since (1) also implies that we can take $q=r$ in Theorem \ref{th weak hol}, we  obtain that $h^{ss}$ is $ C^{r}$ on $\T^d$.
  \vskip.1cm
  {\bf (2)$\implies$(3).}\, (2) implies Theorem \ref{th weak hol}(2) with $q=r$ and hence it yields 
  Theorem \ref{th weak hol}(3)  with $q=r$, which is (3).
  \vskip.1cm
{\bf (3)$\implies$(1).}\, (3) implies Theorem \ref{th weak hol}(3) with $q=r$ and hence it yields 
  Theorem \ref{th weak hol}(4)  with $q=r$, which is (1).
  
  The additional statement for $h(\w^{ss})=W^{ss}$ also follows from the corresponding part of Theorem \ref{th weak hol}.

\subsection{Proof of Theorem \ref{th weak hol}.}
It follows the same scheme as the proof of  Theorem~\ref{th strong}. 
\vskip.05cm 
We  denote $\E^{ss}=T\w^{ss}$. Since $f$ is $C^1$ close to $L$, the linear extension  $Df|_{\E^{ss}}$ has $(\chi,\e)$ spectrum with small $\e$. Since $\w^{ss}$ has uniformly $C^\infty$ leaves, we can apply Theorems \ref{NFstrong} to 
$\w=\w^{ss}$ and obtain the normal form coordinates  $\h_x$ on $\w^{ss}$  
$$\h_x:\,  \w^{ss}(x) \to \E_x^{ss}. \quad 
$$
The maps $\h_x$  are $C^\infty$ diffeomorphisms and depend continuously on $x$ in $C^\infty$ topology.

 \vskip.3cm
{\bf (1)$\implies$(1$'$).}\,
Suppose that  holonomies $\H=\H^{ws}$ of $\w^{ws}$  inside $\w^{s}$ between the leaves of $\w^{ss}$ are uniformly $C^r$ with $r>r_{ss}(L)$.
Hence we can apply Theorem \ref{Main Hol} with $U=\w^s$, $\w=\w^{ss}$ and $\v=\w^{ws}$. 
Indeed, the lifted holonomies $\bar H$ in \eqref{hol inv eq} are also $C^r$, and so the theorem 
yields that they are sub-resonance polynomials:
\begin{equation} \label{ss hol SR}
 \bar H_{x,y}=\h_y \circ \H_{x,y} \circ \h_x^{-1} :\, \E_x^{ss} \to \E_y^{ss}\quad\text{are in }\s_{x,y},
 \end{equation}
and in particular are $C^\infty$. Since the  coordinates $\h_x$ are also uniformly $C^\infty$, we conclude that  
 holonomies $\H=\H^{ws}$  are uniformly $C^\infty$. 

\vskip.1 cm
{\bf (1)$\implies$(2).}\,
Now we use \eqref{ss hol SR} to show that $h^{ss}$ is uniformly $C^\infty$ along $\w^{ss}$.
Since we do not assume $h(\w^{ss})= W^{ss}$, we need to adjust the holonomy argument accordingly.
We fix a point $x\in \T^d$ and consider the map 
\begin{equation} \label{h hat}
\begin{aligned}
&\hat h_x :\w^{ss}(x) \to W^{ss}(h(x))\quad\text{given by}\quad 
\hat h_x = H^{ws}_{h(x)} \circ h|_{\w^{ss}(x)},\;\text{ where}\\
&\text{$H^{ws}_{h(x)}:h(\w^{ss}(x))\to W^{ss}(h(x))$ is the linear holonomy
along $W^{ws}$.}
\end{aligned}
\end{equation}
We will prove that the maps $\hat h_x$ are uniformly $C^\infty$. 
\vskip.1cm

We fix $y\in \w^{ss}(x)$ and take a sequence of points $y_n\in \w^{ws}(x)$ converging to $y$.
This can be done since the leaves of the linear foliation $W^{ws}$ are dense in $\T^d$ and 
 the leaf conjugacy $h$ is a homeomorphism which sends $\w^{ws}$ to $W^{ws}$.
 
 Since $h(\w^{ws})= W^{ws}$, the holonomy maps $\H_{x,y_n}: \w^{ss}(x) \to \w^{ss}(y_n)$
 are conjugated to the corresponding linear  holonomies $H_{h(x),\,h(y_n)}:W^{ss}(h(x)) \to W^{ss}(h(y_n)),$
$$
\H_{x,y_n}=(\hat h_{y_n})^{-1}\circ  H_{h(x),\,h(y_n)}\circ  \hat h_x : \w^{ss}(x) \to \w^{ss}(y_n).
$$ 
We note that $H_{h(x),\,h(y_n)}$ are  translations $H_{v_n}$ by the vectors $v_n=h(y_n)-h(x)$.
Since $y_n$ converge to $y$, and hence $h(y_n)$ converge to $h(y)$, we see that for $v=h(y)-h(x)$, 
$$
 H_{h(x),\,h(y_n)} =H_{v_n} \;\text{ converge to }\;
H_{v} : W^{ss}(h(x)) \to W^{ss}(h(y)).
$$ 
Since $\hat h_y$ depend continuously on $y$, we obtain $C^0$ convergence 
$$
 \H_{x,y_n} \,\text{ converge to }\; 
(\hat h_{y})^{-1}\circ  H_{v}\circ  \hat h_x : \,\w^{ss}(x) \to \w^{ss}(y)=\w^{ss}(x).
$$ 
For $y\in \w^{ss}(x)$ we have that $\hat h_y$ and $\hat h_x$ are related by
the translation holonomy $H_{\ti v}$
$$ \hat h_x \circ  (\hat h_{y})^{-1}  = H_{\ti v}:\, W^{ss}(h(y)) \to W^{ss}(h(x)),
$$
where $\ti v=\hat h_x(y)-h(y)\in E^{ws}$.  We conclude that  
\begin{equation} \label{Hv}
\H_{x,y_n}  \;\text{ converges to }\; \H_{\hat v}:=(\hat h_x)^{-1}\circ  H_{\hat v}\circ  \hat h_x : \w^{ss}(x) \to \w^{ss}(x) =\w^{ss}(y),
\end{equation} 
where $H_{\hat v}=H_{\ti v} \circ H_v$ is the translation by $\hat v=v+\ti v=\hat h_x(y)- h (x)$.
\vskip.05cm 

Since $\h_y$ depend continuously on $y$, using  \eqref{ss hol SR} and \eqref{Hv}
 we obtain that the corresponding lifted holonomies $ \bar H_{x,y_n}$ converge to
 a sub-resonance polynomial $ P_{x,y}\in \s_{x,y}$,
 $$  P_{x,y}=  \h_{y} \circ  \H_{\hat v} \circ  \h_x^{-1} =
  \h_{y} \circ  (\hat h_y)^{-1}\circ H_{\hat v} \circ (\hat h_x) \circ  \h_x^{-1}  :\;  \E_x^{ss} \to \E_y^{ss}.
 $$
  
Now we lift  $\hat h_x$ to $\w ^{ss}(x)$ to $\E_x$  using coordinates $\h_x$
  $$
  \bar h_x = \hat h_x \circ \h_x^{-1} :\; \E_x^{ss} \to W^{ss}(h(x)),
  $$
and  conjugate the translation $H_{\hat v}$  by $\bar h_x$ to obtain
$$ 
\bar H_{\hat v}=   (\bar h_x)^{-1}\circ H_{\hat v} \circ \bar h_x =
  \h_{x} \circ  h^{-1}\circ H_{\hat v} \circ h \circ  \h_x^{-1}=  \h_{x} \circ \h_y^{-1} \circ P_{x,y} :\;  \E_x^{ss} \to \E_x^{ss}.
 $$
Since $P_{x,y} \in \s_{x,y}$ and $\h_{x} \circ \h_y^{-1} :\;  \E_y^{ss} \to \E_x^{ss}$
is a composition of a sub-resonance polynomial in $ \s_{y,x}$ with a translation of $\E_x^{ss}$ we conclude that  
$\bar H_v \in \bar \s_x$, the finite dimensional Lie group  generated by $\s_x$ and the translations of $\E_x^{ss}$.
  Thus $\bar h_x$ conjugates the action of $E^{ss}$ by translations of $W^{ss}(h(x))$ 
with the  continuous action of $E^{ss}$ by elements of $\bar \s_x$. So we get the injective  continuous homomorphism 
 $$
 \eta_x : E^{ss} \to  \bar \s_x \quad\text{given by }\; 
 \eta_x ({\hat v})= \bar H_{\hat v} =(\bar h_x)^{-1} \circ H_{\hat v} \circ \bar h_x.
 $$ 
 which is $C^\infty$ and depends continuously on $x$ in $C^\infty$ topology. 
 This yields  that $\hat h^{-1}_x$ and $\hat h_x$ are also $C^\infty$ diffeomorphisms that
  depend continuously on $x$ in $C^\infty$ topology. 
   
This proves that the component $h^{ss}$ is uniformly $C^\infty$ along the leaves of  $\w^{ss}$, 
as it is easy to see that $\hat h_x = h^{ss} |_{\w^{ss}(x)}$ under a local identification of 
$W^{ss}(h(x))$ with $E^{ss}$.
Since  $h^{ss}$ is also locally constant along the transversal leaves $\w^{u+ws}$, it is as regular 
 along these leaves as they are. Since $\w^u$ has iniformly $C^\infty$ leaves and $\w^{ws}$
 has uniformly $C^q$, the leaves of the joint foliation $\w^{u+ws}$ are uniformly $C^{q}$.
 By Journ\'e lemma we conclude that  $h^{ss} \in C^{q} (\T^d)$.
 \vskip.2 cm
 
{\bf (2)$\implies$(1$'$).}\, Since $h^{ss}$ is uniformly $C^\infty$ along the leaves of  $\w^{ss}$
and  $\hat h_x =h^{ss}|_{\w^{ss}}$, it follows that
the maps $\hat h_x$ are also uniformly $C^\infty$ along the leaves of  $\w^{ss}$. It is easy to see from
the definition  \eqref{h hat} that $\hat h_x$ conjugates the holonomies of $\w^{ws}$
inside $\w^{s}$ with corresponding linear holonomies:
$\H_{x,y} =(\hat h_y )^{-1}\circ H_{x,y} \circ \hat h_x$ and hence $\H_{x,y}$ are uniformly $C^\infty$.
  
     \vskip.2cm
{\bf The case when $h(\w^{ss})=W^{ss}$.} $\;$
When $h(\w^{ss})=W^{ss}$,  it is clear  from \eqref{h hat}
that $h|_{\w^{ss}}=\hat h_x=h^{ss}|_{\w^{ss}}$ and hence (2) implies
that  $h$  is a uniformly $C^{\infty}$ diffeomorphism along $\w^{ss}$. 
Conversely, if $h(\w^{ss})=W^{ss}$ and  $h$  is a uniformly $C^r$  diffeomorphism along $\w^{ss}$ with $r>r_{ss}(L)$, then so are the holonomies of $\w^{ws}$ as they are conjugate
to the linear holonomies: $\H_{x,y} =(h|_{\w^{ss}(y)} )^{-1}\circ H_{x,y} \circ h|_{\w^{ss}(x)}$.
This yields (1).

  \vskip.2 cm
 
{\bf (2)$\implies$(3).}\, The argument is almost identical to the proof of (3) of Theorem \ref{th strong}.
 We map the joint foliation $\w^{u+ws}=h^{-1}(W^{u} \oplus W^{ws})$
  to  $W^{u} \oplus W^{ss}$ by the map
 $$\tilde h (x)=h(x)-\d^{u+ws}(x)=x+\d^{ss}(x) =x^{u+ws}+ h^{ss}(x): \,\T^d \to \T^d.
$$ 
This map  is $C^0$ close to the identity and is  in $C^{q} (\T^d) $ since  $h^{ss}$ is $C^{q} (\T^d)$  by (2).
Invertibility of its derivative $D\tilde h$ follows as in Theorem \ref{th strong} from the fact that 
 $\hat h_x = h^{ss} |_{\w^{ss}(x)} :\w^{ss}(x) \to W^{ss}(h(x))$
 is a $C^\infty$ diffeomorphism by the proof of (2).
 \vskip.2cm
{\bf (3)$\implies$(4).}\, This follows by intersecting the $C^{q}$ foliation $\w^{u+ws}$ with 
uniformly $C^\infty$ leaves of $\w^s$.


\section{Proof of Theorem \ref{strong+weak}} \label{proof strong+weak}

We note that (2)$\iff$(2$'$) does not require dense leaves. 
\vskip.1cm

{\bf (2)$\implies$(2$'$).} Since $h(\w^{u})=W^{u}$, the component $h^s$ is locally constant along $\w^u$, and hence uniformly $C^\infty$ along $\w^u$. Together with (2), this yields that $h^s$  is in  $C^\infty(\T^d)$.
\vskip.1cm

{\bf (2$'$)$\implies$(2).}  Since $h(\w^{s})=W^{s}$
we have $h|_{\w^{s}}=h^s|_{\w^{s}}$ under a local identification of $W^s$ and $E^s$, and hence a uniformly $C^\infty$ diffeomorphism.
\vskip.1cm

{\bf (1)$\implies$(2$'$)} follows by combining Theorems \ref{th strong} and \ref{th weak fol}.
 Indeed, $h(\w^{ss})=W^{ss}$ is Theorems \ref{th strong}(1) and hence yields Theorems \ref{th strong}(5), 
 so that $h^{ws}$  is in  $C^\infty(\T^d)$ and a $C^q$ diffeomorphism along $\w^{ws}$ with some $q>1$.
 The second assumption in Theorem \ref{strong+weak}(1) is Theorem \ref{th weak fol}(1) with $r=\infty$
 and hence yields Theorem \ref{th weak fol}(2) with $r=\infty$, so that $h^{ss}$ 
  is in  $C^\infty(\T^d)$ and a $C^\infty$ diffeomorphism along $\w^{ss}$.  Hence $h^s=h^{ws}+h^{ss}$ is in $C^\infty(\T^d)$ and a diffeomorphism along $\w^s$.
\vskip.1cm

{\bf (2$'$)$\implies$(3).}\, The argument is almost identical to the proof of (3) of Theorem \ref{th strong}.
 We map the  foliation $\w^{u}$ to  $W^{u}$ by the $C^\infty$ diffeomorphism
 $$\tilde h (x)=h(x)-\d^{u}(x)=x+\d^{s}(x)=x^{u}+ h^{s}(x): \,\T^d \to \T^d.
$$ 
This map  is $C^0$ close to the identity and is  in $C^{\infty} (\T^d) $ since  $h^{s}$ is in $C^{\infty} (\T^d)$  by (2).
Invertibility of its derivative $D\tilde h$ follows as in Theorem \ref{th strong} from the fact that 
 $h^{s}$ is a diffeomorphism along $\w^{s}$.

\vskip.1cm

{\bf (3)$\implies$(2).}\, This implication requires only that $W^u$ has dense leaves,  which is always true for a hyperbolic automorphism.
 The proof highlights usefulness of conjugacy to a linear foliation. It is an easy version of the holonomy 
 argument where normal form polynomials are replaced by isometries. 

Let $\phi$ be a $C^{\infty}$ diffeomorphism such that $\phi(\w^u)=W^u$ and let $\w'=\phi(\w^s)$.
Since $W^u$ is linear, its holonomies between $W^s$ leaves are isometries with respect 
to the standard metric on $\T^d$. For each $x\in \T^d$ both $T_xW^s$ and $T_x\w'$ are transverse
to $E^u=TW^u$, and so we can identify them by the projection along  $E^u$. 
This defines a continuous Riemannian metric $g'$ on $T\w'$ for which holonomies of $W^u$ between $\w'$ leaves are isometries. Moreover, since the leaves of $\w^s$ are uniformly $C^\infty$, so are the leaves of  $\w'$,
and hence $g'$ is uniformly $C^\infty$ on the leaves of $\w'$. Then $g=\phi^{-1}_*(g')$ defines a
continuous Riemannian metric  on $T\w^s$ which is uniformly $C^\infty$ along the leaves of $\w^s$
and with respect to which the holonomies of $\w^u$ are isometries. 

We note that for each $x$ the isometries of $(\w^s(x),g)$ are $C^\infty$ diffeomorphisms of $\w^{s}(x)$
and form a finite dimensional Lie group $G_x$. Since we have that  the holonomies of $\w^u$ are isometries, 
repeating the holonomy argument as in the proof of Theorem \ref{th strong}, we see that $h|_{\w^s(x)}$ conjugates the action of $E^s$ by translations of
 $W^s(h(x))$ to a continuous action of $E^s$ by isometries in $G_x$. This yields that $h|_{\w^s(x)}$ are
 uniformly $C^\infty$ diffeomorphisms.


\section{Proof of Theorem \ref{symp}} \label{proof symp}

\vskip.1cm
{\bf (1)$\implies$(2,\,3,\,4)} is clear. 
\vskip.1cm
{\bf (3)$\implies$(2).} Let $0<\rho<1$ be the largest absolute value of eigenvalues of $L$ on $E^{ss}$
and $0<\rho'<1$ be the smallest  absolute value of eigenvalues of $L$ on $E^{ws}$.
If $f$ is $C^1$-close to $L$ and $y\in \w^{ss}(x)$, then $\dist(f^nx,f^ny)\le C(\rho+\e)^n$ for all $n\in \N$.
If $h$ is $\a$-H\"older, then  for all $n\in \N$ we have
$$\dist(L^nh(x),L^nh( y))= \dist(h(f^nx),h(f^ny)) \le C'\dist(f^nx,f^ny)^\a \le C'C^\a(\rho+\e)^{\a n}.$$
If $\a$ is sufficiently close to 1 so that $(\rho+\e)^{\a }<\rho'$, this implies that $h(y)\in W^{ss}(h(x))$
and thus $h(\w^{ss}(x))\subset W^{ss}(h(x))$.  Since $h$ is a homeomorphism, $ h(\w^{ss}(x))$ contains an open ball in  $W^{ss}(h(x))$, and then iterating by $f$ yields $h(\w^{ss}(x))= W^{ss}(h(x))$.

\vskip.1cm
{\bf (4)$\implies$(2)} under the density of leaves assumption for Lyapunov subfoliations  of $W^{ws+wu}$
follows from Theorem \ref{th strong}\,(4)$\implies$(1) applied to $f$ for $\w^{ws}$ and to $f^{-1}$ for $\w^{wu}$,
yielding $h(\w^{ss})=W^{ss}$ and  $h(\w^{uu})=W^{uu}$ respectively.
\vskip.1cm
{\bf (2)$\implies$(1).} Applying Theorem \ref{th strong}\,(1)$\implies$(3) we obtain that the bundle $\E^{u+ss}$
is $C^\infty$. Similarly using $f^{-1}$ we obtain that $\E^{s+uu}$ is $C^\infty$ and hence so is
the intersection $\E^{s+uu}\cap \E^{u+ss}=\E^{ss+uu}$. Since $\E^{ss+uu}$ and $ \E^{ws+wu}$
 are symplectic orthogonal, we conclude that $\E^{ws+wu}$ and hence  the corresponding foliation
 $\w^{ws+wu}$ are $C^\infty$. This is the only place where we use the assumption that $f$ preserves a $C^\infty$ symplectic form. Since the foliation $\w^{ws+wu}$ is $C^\infty$, by intersecting it with $\w^s$ and $\w^u$ we obtain 
 that $\w^{ws}$ and $\w^{wu}$ are their respective uniformly $C^\infty$ subfoliations.
 Thus we obtain that Theorem \ref{strong+weak}(1) is satisfied for both $f$ and $f^{-1}$ and hence yields (2$'$)
in each case. Combining them we conclude that $h=h^s+h^u$ is in $C^{\infty} (\T^d)$ with invertible derivative.


\end{document}